\documentclass[a4paper,11pt,reqno]{article}

\usepackage[utf8]{inputenc}
\usepackage[T1]{fontenc}
\usepackage{lmodern}
\usepackage[english]{babel}
\usepackage{microtype}

\usepackage{amsmath,amssymb,amsfonts,amsthm}
\usepackage{mathtools}
\usepackage{aliascnt}
\usepackage{mathrsfs}
\usepackage{braket}
\usepackage{bm}

\usepackage[a4paper,hmargin=3.25cm,bottom=4cm,top=3.5cm,footskip=3\baselineskip]{geometry}
\usepackage[colorlinks,citecolor=blue]{hyperref}

\usepackage{tikz}

\usepackage{enumitem}
\setlist[enumerate,1]{label=(\roman*), ref=\roman*}
\setlist[enumerate,2]{label=\alph*., ref=\theenumi.\alph*}
\setlist[enumerate,3]{label=\arabic*., ref=\theenumii.\arabic*}


\AtBeginDocument{}


\makeatletter
\def\newaliasedtheorem#1[#2]#3{
  \newaliascnt{#1@alt}{#2}
  \newtheorem{#1}[#1@alt]{#3}
  \expandafter\newcommand\csname #1@altname\endcsname{#3}
}
\makeatother

\numberwithin{equation}{section}

\theoremstyle{plain}
\newtheorem{theorem}{Theorem}[section]
\newaliasedtheorem{proposition}[theorem]{Proposition}
\newaliasedtheorem{lemma}[theorem]{Lemma}
\newaliasedtheorem{corollary}[theorem]{Corollary}
\newaliasedtheorem{counterexample}[theorem]{Counterexample}

\theoremstyle{definition}
\newaliasedtheorem{definition}[theorem]{Definition}
\newaliasedtheorem{question}[theorem]{Question}
\newaliasedtheorem{example}[theorem]{Example}

\theoremstyle{remark}
\newaliasedtheorem{remark}[theorem]{Remark}

\newcommand{\setN}{\mathbb{N}}

\newcommand{\setR}{\mathbb{R}}

\newcommand{\eps}{\varepsilon}


\DeclarePairedDelimiter{\abs}{\lvert}{\rvert}
\DeclarePairedDelimiter{\norm}{\lVert}{\rVert}

\DeclarePairedDelimiter{\moment}{[\hspace{-1.5pt}[}{]\hspace{-1.5pt}]}


\let\d\undefined
\newcommand{\d}{\mathop{}\!\mathrm{d}}
\newcommand{\D}{\mathop{}\!\mathrm{D}}

\DeclareMathOperator{\Id}{Id}

\DeclareMathOperator{\diam}{diam}

\newcommand{\haus}{\mathscr{H}}
\newcommand{\leb}{\mathscr{L}}

\newcommand{\Prob}{\mathscr{P}}

\newcommand{\res}{\mathop{\raisebox{-.127ex}{\reflectbox{\rotatebox[origin=br]{-90}{$\lnot$}}}}}
\DeclareMathOperator{\Exp}{\mathbb{E}}
\DeclareMathOperator{\Pro}{\mathbb{P}}
\DeclareMathOperator{\Var}{Var}

\newcommand{\meas}{\mathfrak{m}}

\DeclareMathOperator{\Lip}{Lip}

\newcommand{\plchldr}{\,\cdot\,}

\newcommand{\dist}{\mathsf{d}}
\newcommand{\T}{\mathbb{T}}
\newcommand{\Beta}{\mathrm{B}}

\newcommand{\Cuc}{C_{\mathrm{uc}}}
\newcommand{\Csg}{C_{\mathrm{sg}}}
\newcommand{\Cdr}{C_{\mathrm{dr}}}
\newcommand{\Cg}{C_{\mathrm{ge}}}
\newcommand{\Cr}{C_{\mathrm{rt}}}

\newcommand\blfootnote[1]{%
  \begingroup
  \renewcommand\thefootnote{}\footnote{#1}%
  \addtocounter{footnote}{-1}%
  \endgroup
}


\title{A PDE approach to a 2-dimensional matching problem}

\author{
Luigi Ambrosio
\thanks{Scuola Normale Superiore, \url{luigi.ambrosio@sns.it}.} \and
Federico Stra
\thanks{Scuola Normale Superiore, \url{federico.stra@sns.it}.} \and
Dario Trevisan
\thanks{Università di Pisa, \url{dario.trevisan@unipi.it}. Partially supported by Project PRA\_2016\_41.}}


\begin{document}

\maketitle

\begin{abstract}
We prove asymptotic results for 2-dimensional random matching problems. In particular, we obtain the leading term in the asymptotic expansion of the expected quadratic transportation cost for empirical measures of two  samples of independent uniform random variables in the square. Our technique is based on a rigorous formulation of the challenging PDE ansatz by S.\ Caracciolo et al.\ (Phys. Rev. E, {\bf 90} 012118, 2014) that ``linearise'' the Monge-Ampère equation.
\blfootnote{\textit{MSC-2010:} primary 60D05; secondary 49J55, 60H15.}
\blfootnote{\textit{Keywords:} minimal matching, optimal transport, geometric probability.}
\end{abstract}

\tableofcontents

\section{Introduction}

Optimal matching problems are random variational problems widely investigated in the mathematical and physical literature. 
Many variants are possible, for instance the monopartite
problem, dealing with the optimal coupling of an even number $n$ of i.i.d. points $X_i$,  
the grid matching problem, where one looks for the optimal matching of
an empirical measure $\sum_i\frac 1n\delta_{X_i}$ to a deterministic and ``equally spaced'' grid, 
the closely related problem of optimal matching to the common law $\meas$ of $X_i$ and the bipartite
problem, dealing with the optimal matching of $\sum_i\frac 1n\delta_{X_i}$ to $\sum_i\frac 1n\delta_{X_i}$,
with $(X_i,Y_i)$ i.i.d. See the monographs \cite{Y98} and \cite{T14} for many more informations on this subject.
In addition to these problems, one may study
the optimal assignment problem, \cite{C04}, where the optimization involves also the weights of the
Dirac masses $\delta_{X_i}$  and the closely related problem of transporting Lebesgue measure to
a Poisson point process \cite{HS13}, which involves in the limit measures with infinite mass.

In this paper we focus on two of these problems, namely optimal matching to the reference measure and the
bipartite problem. Denoting by $D$ the $d$-dimensional domain and by $\meas\in\Prob(D)$ the law of the points
$X_i$, $Y_i$, the problem is to estimate the rate of convergence to $0$ of
\begin{equation}\label{eq:ambr30}
\Exp\left[W_p^p\left(\sum_{i=1}^n\frac 1n\delta_{X_i},\meas\right)\right],\qquad
\Exp\left[W_p^p\left(\sum_{i=1}^n\frac 1n\delta_{X_i},
	\sum_{i=1}^n \frac 1n\delta_{Y_i}\right)\right],
\end{equation}
where $p\in [1,\infty)$ is the power occurring in the transportation cost $c=\dist^p$
(also the case $p=\infty$ is considered in the literature, see for instance \cite{SY91} and the
references therein), finding tight upper and lower bounds and, possibly, proving existence of the
limit of the renormalized quantities as $n\to\infty$.

The typical distance between points is expected to be of order $n^{-1/d}$, and therefore it is natural to guess that the quantities $c_{n,p,d}$ introduced in \eqref{eq:ambr30} behave as $n^{-p/d}$. However, it is by now well
known that this hypothesis is true for $d\geq 3$, while it is false for $d=1$ and $d=2$. Despite
plenty of heuristic arguments and numerical results, these are (as far as we know) the main results that have been rigorously 
proved (we focus here on the model case when $\meas$ is the uniform measure and we do not distinguish between optimal matching to $\meas$ and bipartite), denoting $a_n\sim b_n$ if $\limsup_n a_n/b_n<\infty$ and $\limsup_n b_n/a_n<\infty$:
\begin{itemize}
\item when $D=[0,1]$ or $D=\T^1$, then $c_{n,p,1}/n^{-p}\sim n^{p/2}$ and, when $p=2$, $\lim\limits_{n\to\infty} n c_{n,2,1}$ can be explicitly computed, see \cite{CS14};
\item when $D=[0,1]^2$, then $c_{n,p,2}/n^{-p/2}\sim (\log n)^{p/2}$, see \cite{AKT84};
\item when $D=[0,1]^d$ with $d\geq 3$, then $c_{n,1,d}/n^{-1/d}\sim 1$ and the limit exists \cite{BM02}, \cite{DY95}, for general $p>1$ and $2p>d$ one has $c_{n,p,d}/n^{-p/d}\sim 1$ and the limit exists \cite{B13}; a combination of these results and H\"older's inequality gives $c_{n,p,d}/n^{-p/d}\sim 1$ for $p\in[1,\infty)$ and $d\geq3$, but it is not known whether the limit exists for $p\in(1,d/2)$.  In the more recent paper \cite{FG15} also non-asymptotic upper bounds have been provided.
\end{itemize}
Notice that some of the results listed above provide not only convergence of the expectations, but also almost sure
convergence which, under some circumstances (see for instance \cite{B13}) can be obtained from concentration inequalities
as soon as convergence of the expectations is known. In the case $d=2$, the convergence of $(\log n)^{-p/2}c_{n,p,2}/n^{-p/2}$ as $n\to\infty$ and the characterization of the limit are still open problems, particularly in the case $p=1$ \cite[Research problem 4.3.3]{T14}.

Our interest in this subject has been motivated by the recent work \cite{CLPS14} where, on the basis
of an ansatz, very specific predictions on the expansion of 
\[
n^{-p/d}\Exp\left[W_p^p\left(\sum_{i=1}^n\frac 1n\delta_{X_i},
	\sum_{i=1}^n\frac 1n\delta_{Y_i}\right)\right]
\]
have been made on $\T^d$, for all ranges of dimensions $d$ and powers $p$. In brief, the ansatz of \cite{CLPS14} 
is based on a linearisation ($\rho_i\sim 1$ in $C^1$ topology, $\psi\sim f+\frac 12 \abs{x}^2$ in $C^2$ topology)
of the Monge-Amp\`ere equation
\[
\rho_1(\nabla\psi){\rm det}\nabla^2\psi=\rho_0,
\]
(which describes the optimal transport map $T=\nabla\psi$ from the measures having probability densities $\rho_0$ to $\rho_1$) leading to Poisson's equation $-\Delta f=\rho_1-\rho_0$.

This ansatz is very appealing, but on the mathematical side it poses several challenges, because the energies involved are infinite for $d\geq 2$ (the measures being Dirac masses), because this procedure does not provide an exact matching between the measures (due to the linearisation) and because the necessity of giving lower bounds persists, as matchings provide only upper bounds. While we are still very far from justifying rigorously all predictions of \cite{CLPS14}, see also \autoref{sec:more_general_spaces} for a discussion on this topic, we have been able to use this idea to prove existence of the limit and compute explicitly it in the case $p=d=2$, in agreement with \cite{CLPS14}: 

\begin{theorem}[Main result]\label{thm:main} Assume that either $D=[0,1]^2$ or that $D$ is a compact 2-dimensional
Riemannian manifold with no boundary and unit volume $\meas(D)=1$ and set $\mu^n=\sum_i\frac 1n\delta_{X_i}$,
$\nu^n=\sum_i\frac 1n\delta_{Y_i}$, being $X_i$, $Y_i$\ i.i.d.\ with law $\meas$. Then,
\[
\lim_{n\to\infty} \frac{n}{\log n} \Exp\left[W_2^2(\mu^n,\meas)\right]
= \frac 1{4\pi}.
\]
In the bipartite case, if either $D=[0,1]^2$ or $D=\T^2$, one has
\begin{equation}\label{eq:ambr25}
\lim_{n\to\infty} \frac{n}{\log n} \Exp\left[W_2^2(\mu^n,\nu^n)\right]
= \frac 1{2\pi}.
\end{equation}
Finally, in the case $D=[0,1]^2$, if $T^{\mu^n}$ denotes the optimal transport map from $\meas$
to $\mu^n$, one has
\begin{equation}\label{eq:ambr27}
\lim_{n\to\infty}\frac{n}{\log n} \int_D \abs*{\Exp\bigl[T^{\mu^n}(x)-x\bigr]}^2 \d\meas(x) = 0.
\end{equation}
\end{theorem}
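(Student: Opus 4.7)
The plan is to make the Caracciolo et al.\ linearisation ansatz rigorous by combining it with heat-semigroup regularisation at a scale $t = t_n$ to be tuned to $n$. Let $(P_t)_{t>0}$ denote the heat semigroup on $D$ (with Neumann boundary conditions on $[0,1]^2$) and let $(\phi_k)_{k\geq 1}$ be an $L^2(\meas)$-orthonormal basis of Laplace eigenfunctions with eigenvalues $0<\lambda_1\leq\lambda_2\leq\cdots$. Solve the Poisson problem $-\Delta f_t=P_t\mu^n-\meas$ and use $T_t=\Id+\nabla f_t$ as an approximate transport from $\meas$ to $P_t\mu^n$. Parseval's identity yields
\[
\int_D\abs{\nabla f_t}^2\d\meas=\sum_{k\geq 1}\frac{e^{-2\lambda_k t}}{\lambda_k}\abs*{\int_D\phi_k\d(\mu^n-\meas)}^2,
\]
whose expectation equals $\frac{1}{n}\sum_k e^{-2\lambda_k t}/\lambda_k$ because the $\phi_k(X_i)$ are i.i.d., centred, of unit variance. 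By Weyl's law $\#\{k:\lambda_k\leq\Lambda\}\sim\Lambda/(4\pi)$ in dimension two, this sum is asymptotic to $\log(1/t_n)/(4\pi n)$, so choosing $t_n$ with $\log(1/t_n)\sim\log n$ reproduces the sharp constant $1/(4\pi)$.

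To turn this ansatz into an upper bound for $W_2^2(\mu^n,\meas)$ I would perturb $T_t$ into a genuine coupling of $\meas$ and $P_t\mu^n$, paying a second-order error controlled by $\norm{D^2 f_t}_\infty\int\abs{\nabla f_t}^2\d\meas$, and then transport $P_t\mu^n$ back to $\mu^n$ along the heat flow at a further cost $O(t_n)$; both errors are of lower order provided $t_n$ is polynomially small in $n$. The matching lower bound comes from Kantorovich duality: with $\frac12\abs{x}^2-f_t$ as a near-Brenier potential, a second-order Taylor expansion of its $c$-transform gives $W_2^2(\mu^n,\meas)\geq\int\abs{\nabla f_t}^2\d\meas$ minus a lower-order error. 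The \emph{main obstacle} is the uniform control of these nonlinear residuals in the randomness: it requires Schauder- or $L^\infty$-type estimates on $f_t$ coming from the smoothness of the mollified data $P_t\mu^n-\meas$, and these in turn dictate the admissible scale $t_n$.

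For the bipartite statement \eqref{eq:ambr25}, the na\"ive triangle inequality $W_2^2(\mu^n,\nu^n)\leq 2W_2^2(\mu^n,\meas)+2W_2^2(\nu^n,\meas)$ gives the wrong constant $1/\pi$. Instead, pushing $\meas$ forward through $(T^{\mu^n},T^{\nu^n})$ yields a coupling of $\mu^n$ and $\nu^n$, whence
\[
W_2^2(\mu^n,\nu^n)\leq W_2^2(\mu^n,\meas)+W_2^2(\nu^n,\meas)-2\int_D(T^{\mu^n}-\Id)\cdot(T^{\nu^n}-\Id)\d\meas.
\]
By independence the expectation of the cross-integral equals $\int_D\abs{\Exp[T^{\mu^n}-\Id]}^2\d\meas$, which is $o(\log n/n)$ by \eqref{eq:ambr27} on $[0,1]^2$ and vanishes by translation symmetry on $\T^2$; this recovers the sharp upper constant $2\cdot\frac{1}{4\pi}=\frac{1}{2\pi}$. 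The matching lower bound runs parallel to the first statement, but Poisson's equation now has right-hand side $P_t(\mu^n-\nu^n)$ whose Fourier coefficients have variance $2/n$, producing exactly the factor of two.

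Finally \eqref{eq:ambr27} exploits the linearity of $\mu^n\mapsto\nabla f_t$: since $\Exp\mu^n=\meas$ one has $\Exp\nabla f_t=0$, and so the bias $\Exp[T^{\mu^n}-\Id]$ coincides with the expectation of the nonlinear residual $T^{\mu^n}-\Id-\nabla f_t$. A quantitative $L^2(\meas)$ stability estimate for the Brenier map, combined with Jensen's inequality, then reduces \eqref{eq:ambr27} to a residual bound of the same flavour as in the first statement but at the higher order appropriate for a second-order correction rather than a leading term.
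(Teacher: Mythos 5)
Your overall architecture --- heat-semigroup regularisation at scale $t_n$, the Poisson equation with datum $P_t^*\mu^n-\meas$, the spectral/Weyl computation giving $\Exp\bigl[\int_D\abs{\nabla f_t}^2\d\meas\bigr]\sim\frac{\log n}{4\pi n}$, duality for the lower bound, and the independence identity for the bipartite case --- is exactly the paper's. The genuine gap is in how you close the upper bound. Controlling the ``second-order error'' of the approximate map $\Id+\nabla f_t$ by $\norm{D^2f_t}_\infty\int\abs{\nabla f_t}^2\d\meas$ cannot work at the relevant scale: a bound $\norm{\Delta f_t}_\infty\leq\eta$ does not control $\norm{D^2f_t}_\infty$, and since the mollified density has Lipschitz constant of order $t^{-3/2}$, the best Schauder-type interpolation gives $\norm{D^2f_t}_\infty\lesssim\eta\log(1/t)\sim\eta\log n$, so your error term is of order $\eta(\log n)^2/n$ --- \emph{larger} than the main term, not smaller. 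The paper avoids second derivatives of $f_t$ entirely: \autoref{prop:dacorogna-moser} (a dual Dacorogna--Moser/Benamou--Brenier estimate) gives $W_2^2(u_0\meas,\meas)\leq\int_D\abs{\nabla f}^2/M(u_0,1)\d\meas$ with no map to construct, and the whole nonlinear correction sits in the factor $M(u^{n,t},1)^{-1}$, which is handled using only (i) the uniform concentration bound $\Pro(\sup_D\abs{u^{n,t}-1}>\eta)\leq C/n$ obtained by chaining over a $\delta$-net (\autoref{prop:d2-scaling}), and (ii) an $L^4$ bound on $\nabla f^{n,t}$ to apply Cauchy--Schwarz. Neither ingredient appears in your sketch, and both are also indispensable for the lower bound (the $L^4$ bound controls the contribution of the bad event). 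A second quantitative slip: ``$t_n$ polynomially small'' does not make the back-transport cost $W_2^2(\mu^n,P_t^*\mu^n)\leq\Cdr t$ negligible against $\log n/n$; one needs $t=\gamma n^{-1}\log n$ with $\gamma\to0$ for the upper bound, while the concentration needed for the lower bound forces $\gamma$ large --- a tension the paper resolves by running the two bounds with different $\gamma$.

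For the bipartite part your coupling inequality and independence computation reproduce the paper's \eqref{eq:ambr23}, but you miss the point that makes the argument close. The subtracted cross term $2\int_D\abs{\Exp[T^{\mu^n}-\Id]}^2\d\meas$ is automatically nonnegative, so the upper bound $\frac1{2\pi}$ in \eqref{eq:ambr25} requires neither \eqref{eq:ambr27} nor torus symmetry --- you simply drop it. And once the matching lower bound $\frac1{2\pi}$ is established, \eqref{eq:ambr27} follows \emph{for free} by comparing the two sides of \eqref{eq:ambr23}. Your proposed direct proof of \eqref{eq:ambr27} via quantitative $L^2$ stability of Brenier maps is therefore unnecessary and, as stated, unsupported: no stability estimate of the required strength (squared bias $o(\log n/n)$) follows from the tools you invoke, and you would in any case need to control $\Exp[T^{\mu^n}-\Id-\nabla f_t]$ with a precision that is exactly the hard nonlinear residual you left open in the first part.
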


In our proof the geometry of the domain $D$ enters only through the (asymptotic) properties of the spectrum of
the Laplacian with Neumann boundary conditions; for this reason we are able to cover also abstract manifolds (where another example of interest could be the two dimensional sphere).
 Even though in dimension $d=1$ (but mostly for the case $D=[0,1]$) a much more detailed analysis
can be made, see for instance \autoref{rem:1deasy}, we include proofs and statements of the 1-$d$ case, to illustrate the flexibility of our
synthetic method.

Let us give some heuristic ideas on the strategy of proof, starting from the upper bound.
In order to obtain finite energy solutions to Poisson's equation
we study the regularized PDE
\begin{equation}\label{eq:Poisson_intro}
-\Delta f^{n,t}=(u^{n,t}-1)
\end{equation}
where $u^{n,t}$ is the density of $P_t^*(\mu^n-\meas)$ and $P^*_t$ is the heat semigroup with Neumann boundary conditions,
acting on measures. Then, choosing $t=\gamma n^{-1}\log n$
with $\gamma$ small, we have a small error in the estimation from above of $c_{n,2,2}$ if we replace $\mu^n$ by
its regularization $P_t^*\mu^n$. Eventually, we use Dacorogna-Moser's technique (see \autoref{prop:dacorogna-moser}) 
to provide an exact coupling between $P_t^*\mu^n$ and $\meas$, leading to an estimate of the form
\[
W_2^2(P_t^*\mu^n,\meas) \leq
\int_D\biggl(\int_0^1\frac 1{(1-s)+su^{n,t}}\d s\biggr)
	\abs{\nabla f^{n,t}}^2\d\meas.
\]
To conclude, we have to estimate very carefully how much the factor in front of $\abs{\nabla f^{n,t}}^2$ differs from 1; this requires
in particular higher integrability estimates on $\abs{\nabla f^{n,t}}$.

Let us consider now the lower bound. The duality formula
\[
\frac 12 W_2^2(\mu,\nu)=\sup_{\phi(x)+\psi(y)\leq \dist^2(x,y)/2}-\int_D\phi\d\mu+\int_D\psi\d\nu
\]
is the standard way to provide lower bounds on $W_2$; given $\phi$, the best possible $\psi=Q_1\phi$ compatible with the constraint is given by the Hopf-Lax formula \eqref{eq:ambr6}. Choosing again $\phi=f^{n,t}$ as in the ansatz, we are led to estimate carefully 
\[
\frac 12\int \abs{\nabla f^{n,t}}^2-\biggl(-\int_D f^{n,t}\d\meas+\int_DQ_1 f^{n,t}\d\meas\biggr)
\]
in events of the form $\{\abs{u^{n,t}-1}\leq\eta\}$ (whose probabilities tend to 1). We do this using Laplacian estimates and the viscosity approximation of the Hopf-Lax semigroup provided by the Hopf-Cole transform. 

In the bipartite case, the result can be obtained from the previous ones playing with independence. Heuristically, the random ``vectors'' pointing from $\meas$ to $\mu^n$ and from $\meas$ to $\nu^n$ are independent, and since $\Prob(D)$ is ``Riemannian'' on small scales when endowed with the distance $W_2$, we obtain a factor $2$, as in the identity $\Exp[ (X-Y)^2] = 2\Var(X)$ when $X$, $Y$ are i.i.d.\ random variables. Interestingly, the rigorous proof of this fact provides also the information \eqref{eq:ambr27} on the mean displacement as function of the position.

The paper is organized as follows. In \autoref{sec:prel} we first recall preliminary results on the Wasserstein distance and the main tools (Dacorogna-Moser interpolation, duality, Hopf-Lax semigroup) involved in the proof of the upper and lower bounds. Then, we provide moment estimates for $\sqrt{n}(\mu^n-\meas)$.

In \autoref{sec:heat} we introduce the heat semigroup $P_t$ and, in a quantitative way, the regularity properties of $P_t$ needed for our scheme to work. We also provide estimates on the canonical regularization of the Hamilton-Jacobi equation provided by the Hopf-Cole transform $-\sigma (\log P_t e^{-f/\sigma})$. The most delicate part of our proof involves bounds on the probability of the events
\[
\left\{\sup_{x\in D}\abs{u^{n,t}(x)-1}>\eta\right\}, \qquad \eta>0
\]
which ensure that the probability of these events has a power like decay as $n\to\infty$ if $t=\gamma n^{-1}\log n$, with
$\gamma$ sufficiently large (this plays a role in the proof of the lower bound). Finally, in light of the ansatz of \cite{CLPS14}, we provide a formula for
\[
\Exp\left[\int_D \abs{\nabla f^{n,t}}^2\d\meas\right],
\]
where $f^{n,t}$ solves the random PDE \eqref{eq:Poisson_intro},  and prove convergence of the renormalized quantity as $n\to\infty$, if $t\sim n^{-1}\log n$.

\autoref{sec:proofmain} provides the proof of our main result, together with \autoref{thm:main1} dealing with the simpler case $d=1$. We first deal with the optimal matching to $\meas$, and then we deal with the bipartite case.

In \autoref{sec:newAKL} we recover the result found in \cite{AKT84} as a consequence of our estimates via a Lipschitz approximation argument.

Finally, \autoref{sec:more_general_spaces} covers extensions to more general classes of domains and open problems, pointing out some potential developments.

\paragraph{Acknowledgment.} The first author warmly thanks S.\ Caracciolo for pointing out to him the paper \cite{CLPS14} and for several conversations on the subject.

\section{Notation and preliminary results}\label{sec:prel}

\subsection{Wasserstein distance}

Let $(D,\dist)$ be a complete and separable metric space. We recall (see e.g.\ \cite{AGS08}) that the quadratic Wasserstein distance $W_2(\mu,\nu)$ between Borel probability measures $\mu$, $\nu$ in $D$ with finite quadratic moments is defined by
\[
W_2^2(\mu,\nu) = \min\left\{
	\int_{D\times D}\dist(x,y)^2\d\Sigma(x,y) :
	\Sigma\in\Gamma(\mu,\nu) \right\},
\]
where $\Gamma(\mu,\nu)$ is the class of transport plans (couplings in Probability) between $\mu$ and $\nu$, namely Borel probability measures $\Sigma$ in $D\times D$ having $\mu$ and $\nu$ as first and second marginals, respectively. We say that
a Borel map $T$ pushing $\mu$ to $\nu$ is optimal if 
\[
W_2^2(\mu,\nu)=\int_D \dist^2(T(x),x)\d\mu(x).
\]
This means that the plan $\Sigma=(\Id\times T)_\#\mu$ induced by $T$ is optimal.

The following
duality formula  will play a key role, both in the proof of the upper and lower bound of the matching cost:
\begin{equation}\label{eq:ambr5}
\frac 12 W_2^2(\mu,\nu)=\sup_{\phi\in\Lip_b(D)}-\int_D \phi\d\mu+\int_D Q_1\phi\d\nu. 
\end{equation} 
In \eqref{eq:ambr5} above, $\Lip_b(D)$ stands for the class of bounded Lipschitz functions on 
$D$ and, for $t>0$, $Q_t\phi$ is provided by the Hopf-Lax formula
\begin{equation}\label{eq:ambr6}
Q_t\phi(y)=\inf_{x\in D}\phi(x) + \frac1{2t}\dist(x,y)^2.
\end{equation}
This formula also provides a semigroup if $(X,\dist)$ is a length space, and $Q_t\phi\uparrow\phi$ as $t\downarrow 0$.

We recall a few basic properties of $Q_t$, whose proof is elementary: if $\phi\in\Lip_b(D)$ then
$\inf\phi\leq Q_t\phi\leq\sup\phi$ and (where $\Lip$ stands for the Lipschitz constant)
\[
\Lip(Q_t\phi)\leq 2\Lip(\phi), \qquad
\norm*{\frac{\d}{\d t}Q_t\phi(x)}_\infty \leq
2\bigl[\Lip(\phi)\bigr]^2 \quad\text{for all $x\in D$}.
\]
In particular $\Lip_b(D)$ is invariant under the action of $Q_t$. For $\phi\in \Lip_b(D)$, 
the key property of $Q_t\phi$ is  
\begin{equation}\label{eq:subsHj}
\frac{\d}{\d t}Q_t\phi+\frac 12\abs{\nabla Q_t\phi}^2\leq 0
\quad\text{$\meas$-a.e.\ in $X$, for all $t>0$},
\end{equation}
with equality if $(D,\dist)$ is a length space (but we will only need the inequality). In \eqref{eq:subsHj}, $\abs{\nabla Q_t\phi}$ is the metric slope of $Q_t\phi$, which corresponds to the norm of the gradient in the Riemannian setting.

We recall that $W_2^2$ is jointly convex, namely if $\mu_i,\nu_i\in\Prob(D)$, $t_i\geq 0$, $\sum_{i=1}^k t_i=1$,
\[
\mu = \sum_{i=1}^k t_i\mu_i, \qquad \nu = \sum_{i=1}^k t_i\nu_i
\]
then
\begin{equation}\label{eq:joint_convexity}
W_2^2(\mu,\nu) \leq \sum_{i=1}^k t_i W_2^2(\mu_i,\nu_i).
\end{equation}
This easily follows by the linear dependence w.r.t.\ $\Sigma$ in the cost function, and by the linearity of the marginal
constraint. More generally, the same argument shows that, for a generic index set $I$,
\begin{equation}\label{eq:joint_convexity_bis}
W_2^2\left(\int_I \mu_i\d\Theta(i),\int_I\nu_i\d\Theta(i)\right)\leq\int_I W_2^2(\mu_i,\nu_i)\d\Theta(i)
\end{equation}
with $\mu_i$, $\nu_i$ and $\Theta$ probability measures, under appropriate measurability assumptions that are easily
checked in all cases when we are going to apply this formula.

The following result is by now well known, we detail for the reader's convenience some steps of the proof from
\cite{AGS08}.

\begin{proposition}[Existence and stability of optimal maps]\label{optimaT}
Let $D\subset\setR^d$ be a compact set, $\mu,\nu\in\Prob(D)$ with $\mu$ absolutely continuous w.r.t.\ Lebesgue $d$-dimensional measure. Then:
\begin{enumerate}
\item[(a)] there exists a unique optimal transport map $T_\mu^\nu$ from $\mu$ to $\nu$.
\item[(b)] if $\nu_h\to\nu$ weakly in $\Prob(D)$, then $T_\mu^{\nu_h}\to T_\mu^\nu$ in
$L^2(D,\mu;D)$.
\end{enumerate}
\end{proposition}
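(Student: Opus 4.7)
The plan is to combine Brenier's theorem (for part (a)) with a compactness-plus-identification argument (for part (b)), upgrading weak to strong $L^2$ convergence via an energy identity.

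For (a), I would invoke Kantorovich duality: any optimal plan $\Sigma$ for the cost $c(x,y) = \dist(x,y)^2/2$ is concentrated on the contact set $\{\phi(x)+\psi(y) = c(x,y)\}$ of a $c$-conjugate pair of potentials. The change of variable $\tilde\phi(x) := \abs{x}^2/2 - \phi(x)$, $\tilde\psi(y) := \psi(y) + \abs{y}^2/2$ makes $\tilde\phi$ the Legendre transform of $\tilde\psi$, hence convex and lower semicontinuous on $\setR^d$, and turns the contact set into the graph of the convex subdifferential $\partial\tilde\phi$. Since convex functions are differentiable outside a $\leb^d$-null set, absolute continuity of $\mu$ forces $\partial\tilde\phi(x)=\{\nabla\tilde\phi(x)\}$ for $\mu$-a.e.\ $x$, whence $\Sigma = (\Id \times T_\mu^\nu)_\#\mu$ with $T_\mu^\nu := \nabla\tilde\phi$. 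Uniqueness $\mu$-a.e.\ follows because every optimal plan must be supported on the same graph.

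For (b), boundedness of $D$ yields the uniform bound $\norm*{T_\mu^{\nu_h}}_{L^\infty(\mu)} \leq \diam(D)$, so the sequence is weakly relatively compact in $L^2(D,\mu;\setR^d)$; I would extract a weak subsequential limit $S$. Correspondingly, the optimal plans $\Sigma_h := (\Id \times T_\mu^{\nu_h})_\# \mu \in \Gamma(\mu,\nu_h)$ are tight on the compact $D \times D$ and admit a further subsequence converging narrowly to some $\Sigma$. Narrow continuity of marginals gives $\Sigma \in \Gamma(\mu,\nu)$, and continuity of $W_2^2$ under narrow convergence on a compact domain yields $\int c \d\Sigma = \lim_h \int c \d\Sigma_h = W_2^2(\mu,\nu)/2$, so $\Sigma$ is optimal. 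By the uniqueness in (a), $\Sigma = (\Id \times T_\mu^\nu)_\#\mu$. Testing the narrow convergence $\Sigma_h \to \Sigma$ against maps $(x,y) \mapsto g(x)\cdot y$ with $g \in C(D;\setR^d)$ identifies $S = T_\mu^\nu$ $\mu$-a.e., and independence of the extracted subsequence upgrades this to weak convergence of the full sequence.

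Finally, to pass from weak to strong convergence I would use the energy identity
\[
\int_D \abs*{T_\mu^{\nu_h}(x)}^2 \d\mu(x)
 = \int_D \abs*{y}^2 \d\nu_h(y)
 \longrightarrow \int_D \abs*{y}^2 \d\nu(y)
 = \int_D \abs*{T_\mu^\nu(x)}^2 \d\mu(x),
\]
which holds because $y \mapsto \abs{y}^2$ is bounded and continuous on the compact set $D$. Combined with weak $L^2$ convergence, norm convergence gives $T_\mu^{\nu_h} \to T_\mu^\nu$ strongly in $L^2(D,\mu;\setR^d)$. The main technical point is identifying the narrow limit plan as the optimal plan of the form $(\Id \times T_\mu^\nu)_\#\mu$; the remaining steps reduce to standard compactness together with the norm-and-weak convergence trick.
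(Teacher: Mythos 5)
Your argument is correct, and its skeleton matches the paper's: part (a) is Brenier's theorem via Kantorovich duality, and part (b) rests on the narrow stability of the optimal plans $(\Id\times T_\mu^{\nu_h})_\#\mu$ together with the uniqueness of the optimal plan from (a). Where you genuinely diverge is the last step of (b). The paper (following [AGS08]) passes from narrow convergence of the plans to convergence \emph{in $\mu$-measure} of the maps via an abstract criterion (Lemma~5.4.1 there), and strong $L^2$ convergence then follows from the uniform bound $\norm{T_\mu^{\nu_h}}_{L^\infty(\mu)}\leq\diam D$. You instead extract a weak $L^2$ limit, identify it with $T_\mu^\nu$ by testing the plans against $(x,y)\mapsto g(x)\cdot y$, and upgrade to strong convergence through the push-forward identity $\int_D\abs{T_\mu^{\nu_h}}^2\d\mu=\int_D\abs{y}^2\d\nu_h\to\int_D\abs{y}^2\d\nu=\int_D\abs{T_\mu^\nu}^2\d\mu$ plus the Radon--Riesz property of the Hilbert space $L^2(\mu)$. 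Your route is self-contained and avoids the measure-theoretic lemma, at the cost of being tied to the quadratic structure (the energy identity is special to $p=2$, whereas convergence in measure plus uniform boundedness would give $L^p$ convergence for every $p<\infty$). One small point worth making explicit in (a): uniqueness holds because \emph{every} optimal plan is concentrated on the contact set of one \emph{fixed} optimal dual pair (since $\int(c-\phi\oplus\psi)\d\Sigma=0$ with nonnegative integrand), hence on the graph of $\nabla\tilde\phi$ over the full-$\mu$-measure differentiability set; as written, "supported on the same graph" slightly glosses over why the potential can be chosen independently of the plan.
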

\begin{proof}
Statement \textit{(a)} is a simple generalization of Brenier's theorem, see for instance \cite[Theorem~6.2.4]{AGS08} for a proof.
The proof of statement \textit{(b)} is typically obtained by combining the stability w.r.t.\ weak convergence of the optimal plans $\nu\mapsto (\Id\times T_\mu^{\nu})$ (see \cite[Proposition~7.1.3]{AGS08}) with a general criterion (see \cite[Lemma~5.4.1]{AGS08}) which allows to deduce convergence in $\mu$-measure of the maps $T_h$ to $T$ from the weak convergence of the plans $(\Id\times T_h)_\#\mu$ to $(\Id\times T)_\#\mu$. 
\end{proof}
%
%
%
\subsection{Transport estimate}

Assume in this section that $D$ is a connected Riemannian manifold, possibly with boundary, whose finite Riemannian volume measure is denoted by $\meas$, with $\dist$ equal to the Riemannian distance.
The estimate from above on $W_2^2$ provided by \autoref{prop:dacorogna-moser} below is closely related to the Benamou-Brenier formula \cite{B00}, \cite[Theorem~8.3.1]{AGS08}, which provides a representation of $W_2^2$ in terms of the minimization of the action $\int_0^1\int_D\abs{{\bf b}_t}^2\d\mu_t\d t$, among all solutions to the continuity equation $\frac{\d}{\d t}\mu_t+{\rm div}({\bf b}_t\mu_t)=0$. It is also related to the Dacorogna-Moser scheme, which 
provides constructively (under suitable smoothness assumptions) a transport map between $\mu_0=u_0\meas$ and 
$\mu_1=u_1\meas$ by solving the PDE
\begin{equation}\label{eq:ambr8}
\left\{\begin{aligned}
& \Delta f = u_1 - u_0 && \text{in $D$}, \\
& \nabla f \cdot n_D = 0 && \text{on $\partial D$}
\end{aligned}\right.
\end{equation}
and then using the flow map of the vector field ${\bf b}_t=u_t^{-1}\nabla f$ at time 1, with $u_t=(1-t)u_1+tu_0$, 
to provide the map. We provide here the estimate without building explictly a coupling, in the spirit of \cite{K10} (see also,
in an abstract setting \cite[Theorem~6.6]{AMS15}), using the duality formula \eqref{eq:ambr5}. This has the advantage to 
avoid smoothness issues and, moreover, uses \eqref{eq:ambr8} only in the weak sense, namely
\[
-\int_D \langle\nabla\phi,\nabla f\rangle\d\meas=\int_D \phi(u_1-u_0)\d\meas\qquad\forall\phi\in\Lip_b(D).
\]

Notice that uniqueness of $f$ in \eqref{eq:ambr8} is obvious, up to additive constants. Existence is guaranteed for $u_i\in L^2(\meas)$ with $\int_D (u_1-u_0)\d\meas=0$  under a spectral gap assumption, thanks to the variational interpretation provided by Lax-Milgram theorem.
Notice also that with the choice ${\bf b}_t=u_t^{-1}\nabla f$ the continuity equation 
$\frac{\d}{\d t} u_t+{\rm div}({\bf b}_t u_t)=0$ holds, in weak form. 

We will also need this definition.

\begin{definition}[Logarithmic mean]
Given $a,b>0$, we define the logarithmic mean
\[
M(a,b) = \frac{a-b}{\log a - \log b} = \left(\int_0^1 \frac1{(1-s)a+sb}\d s\right)^{-1}.
\]
This can be extended to $a,b\geq0$ by continuity, so that $M(a,0)=M(0,b)=M(0,0)=0$.
\end{definition}

\begin{proposition}\label{prop:dacorogna-moser}
Let $u_0,u_1\in L^2(\meas)$ be probability densities with $u_0>0$ $\meas$-a.e.\ in $X$ 
and let $f\in H^{1,2}(D,\meas)$ be any solution to \eqref{eq:ambr8}. Then
\[
W_2^2(u_0\meas,u_1\meas) \leq
\int_0^1 \int_D \frac{\abs{\nabla f}^2}{(1-s)u_0+s u_1} \d\meas \d s =
\int_D \frac{\abs{\nabla f}^2}{M(u_0,u_1)} \d\meas.
\]
\end{proposition}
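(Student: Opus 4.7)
My plan is to avoid constructing a coupling explicitly and instead use the duality formula \eqref{eq:ambr5} together with the Hopf--Lax subsolution inequality \eqref{eq:subsHj}, in the Kuwada-style spirit the authors highlighted. Fix $\phi\in\Lip_b(D)$ and set
\[
u_s := (1-s)u_0 + s u_1, \qquad g(s) := \int_D Q_s\phi\cdot u_s \d\meas, \qquad s\in[0,1].
\]
Since $Q_0\phi=\phi$ and $g(0)=\int\phi u_0\d\meas$, $g(1)=\int Q_1\phi\, u_1 \d\meas$, the quantity to bound from the right-hand side of \eqref{eq:ambr5} is precisely $g(1)-g(0)$; taking the supremum over $\phi$ at the end will produce $\tfrac12 W_2^2(u_0\meas,u_1\meas)$.

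The next step is to differentiate $g$. The known Lipschitz bounds on $Q_s\phi$ and its time derivative (recalled just before \eqref{eq:subsHj}) make $s\mapsto Q_s\phi(x)$ uniformly Lipschitz in $x$, so a Fubini/Leibniz argument yields, for a.e.\ $s\in(0,1)$,
\[
g'(s) = \int_D \frac{\d}{\d s}Q_s\phi\cdot u_s\d\meas + \int_D Q_s\phi\,(u_1-u_0)\d\meas.
\]
Into the first term I plug the Hamilton--Jacobi inequality \eqref{eq:subsHj}, $\frac{\d}{\d s}Q_s\phi\leq -\tfrac12\abs{\nabla Q_s\phi}^2$; for the second term I use that $u_1-u_0=\Delta f$ weakly with Neumann boundary conditions, so testing against $Q_s\phi\in\Lip_b(D)$ gives $\int_D Q_s\phi(u_1-u_0)\d\meas = -\int_D \langle\nabla Q_s\phi,\nabla f\rangle\d\meas$. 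Hence
\[
g'(s) \leq \int_D\biggl(-\tfrac12 u_s \abs{\nabla Q_s\phi}^2 - \langle\nabla Q_s\phi,\nabla f\rangle\biggr)\d\meas.
\]

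Since $u_0>0$ $\meas$-a.e., one has $u_s>0$ $\meas$-a.e.\ for $s\in[0,1)$, and a pointwise completion of squares (Young's inequality)
\[
-\tfrac12 u_s \abs{v}^2 - \langle v,w\rangle \leq \frac{\abs{w}^2}{2u_s}
\]
applied with $v=\nabla Q_s\phi$ and $w=\nabla f$ gives $g'(s)\leq \tfrac12\int_D \abs{\nabla f}^2/u_s \d\meas$. Integrating on $[0,1]$ (after first restricting to $[0,1-\eps]$ and using monotone convergence to absorb the possible degeneracy of $u_1$) and recalling the definition of the logarithmic mean, $\int_0^1 \d s/u_s = 1/M(u_0,u_1)$, yields $g(1)-g(0)\leq \tfrac12\int_D \abs{\nabla f}^2/M(u_0,u_1)\d\meas$. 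Supping over $\phi\in\Lip_b(D)$ in \eqref{eq:ambr5} concludes.

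The main obstacle is the justification of the chain rule for $g$: the identity $\frac{\d}{\d s}Q_s\phi(x)=-\tfrac12\abs{\nabla Q_s\phi(x)}^2$ holds only $\meas$-a.e.\ in $x$ and only for a.e.\ $s$, so some care is required to interchange time differentiation with the spatial integral, and to control the potential blow-up of $1/u_s$ as $s\to 1$ on $\{u_1=0\}$. Both issues are handled by the uniform Lipschitz bounds on $Q_s\phi$ in time, by Fubini, and by an $\eps$-cutoff at $s=1-\eps$ followed by a monotone limit, so they are technical rather than substantive.
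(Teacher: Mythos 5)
Your proposal is correct and follows essentially the same route as the paper's own proof: interpolate with $u_s=(1-s)u_0+su_1$, differentiate $\int_D Q_s\phi\,u_s\,\d\meas$, apply the Hamilton--Jacobi subsolution inequality \eqref{eq:subsHj} together with the weak form of \eqref{eq:ambr8}, complete the square, and conclude by the duality formula \eqref{eq:ambr5}. The extra care you take with the Leibniz rule and the possible degeneracy of $u_1$ near $s=1$ is sound but not a departure from the paper's argument.
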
 

\begin{proof} Let $\phi\in\Lip_b(D)$, set $u_s=(1-s)u_0+su_1$ and notice that $u_s>0$ $\meas$-a.e.\ in $X$
for all $s\in [0,1)$. We interpolate, then use Leibniz's rule and \eqref{eq:subsHj} to get
\[
\begin{split}
&\int_D (u_1 Q_1\phi-u_0\phi)\d\meas=
\int_0^1\frac{\d}{\d s}\int_Du_sQ_s\phi\d\meas\d s\\
&=\int_0^1\int_D u_s\frac{\d}{\d s}Q_s\phi+(u_1-u_0)Q_s\phi\d\meas\d s\\
&\le \int_0^1-\frac 12\abs{\nabla Q_s\phi}^2u_s-\langle\nabla f,\nabla Q_s\phi\rangle\d\meas\d s\\
&\leq \frac 12\int_0^1\int_D \frac{\abs{\nabla f}^2}{u_s} \d\meas \d s.
\end{split}
\]
Since $\phi$ is arbitrary, the statement follows from the duality formula \eqref{eq:ambr5}.
\end{proof}

\subsection{Bounds for moments and tails}

In this subsection $(D,\dist)$ is a complete and separable metric space equipped with a Borel probability measure $\meas$. We assume $\diam D<\infty$.

For $n\in\setN_+$, let $X_1,\dotsc,X_n$ be independent and uniformly distributed random variables in $D$, whose common law is $\meas$. 
Let $\mu^n=\frac1n\sum_{i=1}^n \delta_{X_i}$ be the random empirical measure. We define the measures $r^n=\sqrt n(\mu^n-\meas)$, where we use
the natural scaling provided by the central limit theorem. Our goal is to derive upper bounds for the exponential moments $\exp(\lambda\int_D f\d r^n)$ 
and, as a consequence, tail estimates for $\int_D f\d r^n$, related to classical concentration inequalities (Bernstein inequality), see e.g.\ \cite[Lemma~4.3.4]{T14}. For the reader's convenience, we provide a complete proof in the form that we need for our purposes.

\begin{definition}
For $k\in\setN$ and $f\in C_b(D)$, define the $k$-moments $\moment{\plchldr}_k$ by
\[
\moment{f}_k^k = \int_D \left( f(x) - \int_D f\d\meas \right)^k \d\meas(x)
\]
and
\[
\moment{f}_\infty = \norm*{f-\int_Df\d\meas}_{L^\infty(\meas)}.
\]
\end{definition}

Notice that $\moment{f}_0=1$, $\moment{f}_1=0$, $\moment{f}_2\leq\norm{f}_2$ and $\abs*{\moment{f}_{k+2}^{k+2}}\leq \moment{f}_2^2 \moment{f}_\infty^k$.
Moreover, $\moment{\plchldr}_2^2$ is a quadratic form, therefore we introduce also the associated bilinear form
\[
m_2(f,g) = \int_D \left( f(x) - \int_D f\d\meas \right)
	\left( g(x) - \int_D g\d\meas \right) \d\meas(x).
\]
Analogously, we consider also the following quantity
\[
m_4(f,g) = \int_D \left( f(x) - \int_D f\d\meas \right)^2
	\left( g(x) - \int_D g\d\meas \right)^2 \d\meas(x),
\]
so that $m_4(f,f)=\moment{f}_4^4$.

\begin{lemma}[Moment generating function]\label{lem:gen-mom}
Let $f\in C_b(D)$ and $\lambda\in\setR$. Then
\[\begin{split}
\Exp\left[ \exp\left( \lambda \int_D f \d r^n \right) \right] &=
\left\{ \int_D \exp\left[\frac{\lambda}{\sqrt n}\left( f(x) - \int_D f \d\meas \right)\right] \d\meas(x) \right\}^n \\
&= \left( 1 + \sum_{k=2}^\infty \frac{\lambda^k \moment{f}_k^k}{k!n^{k/2}} \right)^n.
\end{split}\]
As a consequence
\begin{equation}\label{eq:gen-mom-upper}
\Exp\left[ \exp\left( \lambda \int_D f \d r^n \right) \right] \leq
\exp\left[ \frac{\lambda^2 \moment{f}_2^2}{2} \exp\left(\frac{\abs\lambda \moment{f}_\infty}{\sqrt n}\right) \right].
\end{equation}
\end{lemma}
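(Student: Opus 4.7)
The plan is to reduce the exponential moment of $\int_D f\d r^n$ to an $n$-fold product of i.i.d.\ one-dimensional exponential moments, expand the resulting scalar integral in a power series in $\lambda/\sqrt n$, and then bound the tail of that series by a geometric-type factor in order to derive \eqref{eq:gen-mom-upper}.

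First, I would rewrite
\[
\int_D f\d r^n=\sqrt n\int_D f\d(\mu^n-\meas)=\frac{1}{\sqrt n}\sum_{i=1}^n\Bigl(f(X_i)-\int_D f\d\meas\Bigr),
\]
so that, using independence of the $X_i$,
\[
\Exp\left[\exp\left(\lambda\int_D f\d r^n\right)\right]=\prod_{i=1}^n\Exp\left[\exp\left(\frac{\lambda}{\sqrt n}\Bigl(f(X_i)-\int_D f\d\meas\Bigr)\right)\right],
\]
which gives the first equality after rewriting each factor as the integral of the exponential against $\meas$. Expanding $e^{y}=\sum_{k\ge0}y^k/k!$ inside the $\meas$-integral, using the definition of $\moment{f}_k^k$ and the fact that $\moment{f}_0=1$, $\moment{f}_1=0$, yields the second equality.

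For the bound \eqref{eq:gen-mom-upper}, I would use the pointwise estimate $\abs{\moment{f}_k^k}\le \moment{f}_2^2\,\moment{f}_\infty^{k-2}$ for $k\ge2$, together with $k!\ge 2(k-2)!$, to obtain
\[
1+\sum_{k=2}^\infty\frac{\lambda^k\moment{f}_k^k}{k!\,n^{k/2}}\le 1+\frac{\lambda^2\moment{f}_2^2}{2n}\sum_{j=0}^\infty\frac{1}{j!}\left(\frac{\abs\lambda\,\moment{f}_\infty}{\sqrt n}\right)^{\!j}=1+\frac{\lambda^2\moment{f}_2^2}{2n}\exp\!\left(\frac{\abs\lambda\,\moment{f}_\infty}{\sqrt n}\right).
\]
Raising to the $n$-th power and applying $(1+x/n)^n\le e^{x}$ yields \eqref{eq:gen-mom-upper}.

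The computation is essentially routine; the only minor subtlety is handling the alternating signs of the odd moments $\moment{f}_k^k$ when $\lambda<0$, which is dealt with simply by passing to absolute values before invoking the inequality $\abs{\moment{f}_k^k}\le \moment{f}_2^2\moment{f}_\infty^{k-2}$. All other steps (Fubini for passing the expectation through the series, and the elementary inequality $k!\ge2(k-2)!$) are straightforward, so I do not anticipate a real obstacle here; this lemma is essentially a quantitative form of a classical Bernstein-type bound tailored to the $1/\sqrt n$ scaling of $r^n$.
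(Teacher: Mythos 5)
Your proposal is correct and follows essentially the same route as the paper: factorize via independence into an $n$-th power, expand the exponential against $\meas$ using the centered moments, then bound the tail with $\abs{\moment{f}_k^k}\leq\moment{f}_2^2\moment{f}_\infty^{k-2}$ and $k!\geq 2(k-2)!$ before applying $(1+x/n)^n\leq e^x$. The only cosmetic difference is that the paper first reduces to $\lambda=1$ by replacing $f$ with $\lambda f$, whereas you carry $\lambda$ (and the absolute values needed for the signed odd moments) through directly.
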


\begin{proof}
It is sufficient to show the result for $\lambda=1$. The general statement then follows by taking $\lambda f$ in place of $f$. By the
definition of empirical measure we have
\[\begin{split}
\Exp\left[ \exp\left( \int_D f \d r^n \right) \right] &=
\Exp\left[ \exp\left( \frac1{\sqrt n}\sum_{i=1}^n f(X_i) - \sqrt n \int_D f\d\meas \right) \right]  \\
&= \Exp\left[ \exp\left( \sum_{i=1}^n \frac1{\sqrt n} \left\{ f(X_i) - \int_D f\d\meas \right\} \right) \right] \\
&= \Exp\left[ \exp\left( \frac1{\sqrt n} \left\{ f(X_1) - \int_D f\d\meas \right\} \right) \right]^n \\
&= \left\{ \int_D \sum_{k=0}^\infty \frac1{k!} \left[\frac1{\sqrt n}\left( f(x) - \int_D f \d\meas \right)\right]^k \d\meas(x) \right\}^n. 
\end{split}\]
The equality above gives
\[\begin{split}
\Exp\left[ \exp\left( \int_D f \d r^n \right) \right]
&= \left\{ 1 + \sum_{k=2}^\infty \frac{\moment{f}_k^k}{k!n^{k/2}} \right\}^n
= \left\{ 1 + \sum_{k=0}^\infty \frac{\moment{f}_{k+2}^{k+2}}{(k+2)!n^{k/2+1}} \right\} ^n \\
&\leq \left\{ 1+\frac{\moment{f}_2^2}{2n} \sum_{k=0}^\infty \frac{\moment{f}_\infty^k}{k!n^{k/2}} \right\}^n
= \left\{ 1 + \frac{\moment{f}_2^2}{2n} \exp\left(\frac{\moment{f}_\infty}{\sqrt n}\right) \right\}^n \\
&\leq \exp\left[ \frac{\moment{f}_2^2}{2} \exp\left(\frac{\moment{f}_\infty}{\sqrt n}\right) \right]. \qedhere
\end{split}\]
\end{proof}

\begin{lemma}\label{lem:covariance}
Let $f,g\in C_b(D)$. Then
\begin{equation}\label{eq:am1}
\Exp\left[\left( \int_D f \d r^n \right)^2 \right] = \moment{f}_2^2, \qquad
\Exp\left[\left( \int_D f \d r^n \right)
	\left( \int_D g \d r^n \right) \right] = m_2(f,g),
\end{equation}
and
\[\begin{split}
\Exp\left[ \left( \int_D f \d r^n \right)^2
	\left( \int_D g \d r^n \right)^2 \right] &=
\frac{n-1}n\left[ \moment{f}_2^2\moment{g}_2^2 + 2m_2(f,g)^2 \right] + \frac1n m_4(f,g) \\
&\leq 3\frac{n-1}{n} \moment{f}_2^2\moment{g}_2^2 + \frac1n \moment{f}_4^2\moment{g}_4^2.
\end{split}\]
\end{lemma}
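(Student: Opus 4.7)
The plan is to expand everything in terms of the centered function $\tilde f = f - \int_D f\d\meas$ (and similarly $\tilde g$), writing
\[
\int_D f\d r^n = \frac{1}{\sqrt n}\sum_{i=1}^n \tilde f(X_i),
\]
and then to reduce all expectations to an index-by-index analysis based on independence and the vanishing of first moments $\Exp[\tilde f(X_i)]=\Exp[\tilde g(X_j)]=0$.

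For \eqref{eq:am1}, I expand the square as $\frac1n\sum_{i,j}\Exp[\tilde f(X_i)\tilde f(X_j)]$. For $i\neq j$ independence and centering make the term zero; for $i=j$ it equals $\moment{f}_2^2$. Summing the $n$ diagonal terms and dividing by $n$ gives the identity. The bilinear version is verbatim the same argument applied to $\tilde f(X_i)\tilde g(X_j)$, yielding $m_2(f,g)$ on the diagonal.

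For the fourth moment, I expand into a quadruple sum
\[
\Exp\left[ \Bigl(\int f\d r^n\Bigr)^{\!2}\Bigl(\int g\d r^n\Bigr)^{\!2}\right]
= \frac{1}{n^2}\sum_{i,j,k,l}\Exp\bigl[\tilde f(X_i)\tilde f(X_j)\tilde g(X_k)\tilde g(X_l)\bigr],
\]
and classify $(i,j,k,l)$ by the induced partition. Whenever some index appears exactly once the term vanishes by independence and centering, so only two types of partitions contribute: the singleton $i=j=k=l$ (there are $n$ such tuples, each contributing $m_4(f,g)$) and the three ways of pairing into two pairs of distinct values. Among the latter, $\{i=j,\,k=l\}$ with $i\neq k$ contributes $\moment{f}_2^2\moment{g}_2^2$, while $\{i=k,\,j=l\}$ and $\{i=l,\,j=k\}$ each contribute $m_2(f,g)^2$, and each of these three configurations is realized by $n(n-1)$ tuples. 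Collecting terms and dividing by $n^2$ yields the claimed identity.

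The inequality is then two applications of Cauchy--Schwarz with respect to $\meas$: first
\[
m_2(f,g)^2 \leq \moment{f}_2^2\moment{g}_2^2,\qquad \text{so}\qquad \moment{f}_2^2\moment{g}_2^2 + 2\,m_2(f,g)^2 \leq 3\,\moment{f}_2^2\moment{g}_2^2,
\]
and second $m_4(f,g)=\int_D \tilde f^2\tilde g^2\d\meas \leq \moment{f}_4^2\moment{g}_4^2$. There is no real obstacle here — the only point requiring care is the bookkeeping in the partition count, which is the step I expect to be the least mechanical part of the argument.
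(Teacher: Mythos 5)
Your proof is correct, and it takes a genuinely different route from the paper's. The paper obtains the two ``diagonal'' identities (the case $f=g$) by differentiating at $\lambda=0$ the explicit power-series expression for $\Exp[\exp(\lambda\int_D f\d r^n)]$ already established in \autoref{lem:gen-mom}, which immediately yields $\Exp[(\int_D f\d r^n)^2]=\moment{f}_2^2$ and $\Exp[(\int_D f\d r^n)^4]=3\frac{n-1}{n}\moment{f}_2^4+\frac1n\moment{f}_4^4$; the mixed identities are then deduced by polarization (substituting $f+sg$ and matching coefficients of $s$). You instead expand $\int_D f\d r^n=n^{-1/2}\sum_i\tilde f(X_i)$ and compute the moments directly by classifying index tuples: terms with a singleton index vanish by centering and independence, the all-equal diagonal gives $n\cdot m_4(f,g)$, and the three pair-partitions each occur $n(n-1)$ times, one giving $\moment{f}_2^2\moment{g}_2^2$ and two giving $m_2(f,g)^2$. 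Your bookkeeping is right (tuples with multiplicity pattern $3{+}1$ or $2{+}1{+}1$ are correctly discarded as containing a singleton), and the final inequalities via Cauchy--Schwarz, $m_2(f,g)^2\le\moment{f}_2^2\moment{g}_2^2$ and $m_4(f,g)\le\moment{f}_4^2\moment{g}_4^2$, match the paper. What each approach buys: the paper's route recycles the computation of \autoref{lem:gen-mom} and keeps the diagonal case one line long, but hides the combinatorics inside the polarization step; your route is self-contained, treats the mixed case directly without polarization, and makes the provenance of the coefficients $\frac{n-1}{n}$ and $\frac1n$ transparent, at the cost of the partition count you rightly flag as the least mechanical step.
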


\begin{proof} Since
\[
\Exp\left[ \exp\left( \lambda \int_D f \d r^n \right) \right] =\sum_{k=0}^\infty\frac{\lambda^k}{k!}\Exp\left[\left(\int_D f\d r^n\right)^k\right],
\]
it is sufficient to compute the second and fourth derivatives with respect to $\lambda$ at $\lambda=0$ in the expression for
$\Exp\left[ \exp\left( \lambda \int_D f \d r^n \right) \right]$ provided by \autoref{lem:gen-mom} 
to obtain, respectively, the first identity in \eqref{eq:am1} and
\[
\Exp\left[ \left( \int_D f \d r^n \right)^4 \right] =
	3\frac{n-1}n \moment{f}_2^4 + \frac1n \moment{f}_4^4.
\]
The remaining two identities follow by polarization.
\end{proof}

For $c,\eta>0$, define the function
\begin{equation}\label{eq:defF}
F(c,\eta) = \sup_{\lambda>0} \left\{
\lambda \eta - \frac{\lambda^2}2 \exp(c \lambda)
\right\} > 0.
\end{equation}
Notice that $F(c,\eta)$ is decreasing in $c$, increasing in $\eta$ and that the formula
\[
cF(c,\eta)=\sup_{\lambda>0} \left\{
c\lambda \eta - c\frac{\lambda^2}2 \exp(c \lambda)
\right\}=\sup_{\lambda'>0} \left\{
\lambda' \eta - \frac{(\lambda')^2}{2c} \exp(\lambda')
\right\}
\]
shows that $c F(c,\eta)$ is increasing in $c$.
We will use the function $F$ to estimate the tails of $\int_D f\d r^n$. 

\begin{lemma}[Tail bound]\label{lem:laplace-markov-legendre}
Let $X$ be a real random variable such that, for some $c_1,c_2>0$,
\[
\Exp[\exp(\lambda X)] \leq
\exp\left[ \frac{\lambda^2 c_1}2 \exp(\abs\lambda c_2) \right]
\qquad \forall\lambda\in\setR.
\]
Then for every $\eta\geq 0$ we have
\[
\Pro(\abs X > \eta) \leq
2\exp\left[ - \frac1{c_1} F\left(\frac{c_2}{c_1},\eta\right) \right].
\]
\end{lemma}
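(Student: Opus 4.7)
The plan is to use the Cramér--Chernoff method: bound one-sided tails by Markov's inequality applied to $\exp(\lambda X)$, optimize in $\lambda>0$, and then symmetrize via a union bound to handle $\Pro(\abs{X}>\eta)$.

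Concretely, for any $\lambda>0$, Markov's inequality and the hypothesis give
\[
\Pro(X>\eta) \leq e^{-\lambda\eta}\,\Exp[\exp(\lambda X)]
\leq \exp\!\left[-\lambda\eta + \frac{\lambda^2 c_1}{2}\exp(\lambda c_2)\right].
\]
Taking the infimum over $\lambda>0$ on the right-hand side therefore produces the sharpest such bound. To see that this infimum matches the claimed expression, I would perform the change of variable $\lambda' = c_1\lambda$, which yields
\[
-\lambda\eta + \frac{\lambda^2 c_1}{2}\exp(\lambda c_2)
= \frac{1}{c_1}\left[-\lambda'\eta + \frac{(\lambda')^2}{2}\exp\!\left(\lambda'\frac{c_2}{c_1}\right)\right].
\]
Taking $\inf_{\lambda>0}$ on the left equals $\inf_{\lambda'>0}$ on the right, and by definition of $F$ in \eqref{eq:defF} the latter is $-\tfrac{1}{c_1}F(c_2/c_1,\eta)$. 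Hence
\[
\Pro(X>\eta) \leq \exp\!\left[-\frac{1}{c_1}F\!\left(\frac{c_2}{c_1},\eta\right)\right].
\]

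The hypothesis on $X$ depends on $\lambda$ only through $\abs{\lambda}$, so it is also satisfied by $-X$; the same argument then gives the identical bound on $\Pro(-X>\eta)$. A union bound $\Pro(\abs{X}>\eta) \leq \Pro(X>\eta) + \Pro(-X>\eta)$ supplies the factor of $2$ and completes the proof.

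There is essentially no obstacle here: the argument is the standard exponential Markov/Legendre transform trick, and the only computation with any content is the rescaling $\lambda'=c_1\lambda$ that exhibits the sup defining $F(c_2/c_1,\eta)$. No properties of $F$ beyond its defining formula are needed, and positivity of $F$ (already noted after \eqref{eq:defF}) ensures the bound is nontrivial for $\eta>0$; for $\eta=0$ both sides are at most $2$ and the statement is vacuous.
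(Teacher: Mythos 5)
Your proposal is correct and follows essentially the same route as the paper: Markov's inequality applied to $\exp(\lambda X)$, optimization in $\lambda>0$ with the rescaling $\lambda'=c_1\lambda$ to exhibit $-\tfrac1{c_1}F(c_2/c_1,\eta)$, and a union bound together with the observation that $-X$ satisfies the same hypothesis. No gaps.
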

\begin{proof}
We have $\Pro(\abs X > \eta) \leq \Pro(X > \eta) + \Pro(X < -\eta)$. For the first term and $\lambda>0$
\[\begin{split}
\Pro(X > \eta) &= \Pro\bigl(\exp(\lambda X) > \exp(\lambda \eta)\bigr) \\
&\leq \Exp[\exp(\lambda X)] \exp(-\lambda \eta) \leq 
\exp\left[ \frac{\lambda^2 c_1}2 \exp(\lambda c_2) - \lambda \eta \right] .
\end{split}\]
Hence
\[
\Pro(X>\eta) \leq
\exp\left[ \inf_{\lambda>0} \left( \frac{\lambda^2 c_1}2 \exp(\lambda c_2) - \lambda \eta \right) \right] 
= \exp\left[ \frac1{c_1} \inf_{\lambda>0} \left\{ \frac{\lambda^2}2 \exp\left(\lambda \frac{c_2}{c_1}\right) - \lambda \eta \right\} \right].
\]
For the other term, we use the fact that $\Pro(X < -\eta) = \Pro(-X > \eta)$ and $-X$ satisfies the same hypothesis.
\end{proof}

\section{Heat semigroup}\label{sec:heat}

In this section we add more structure to $D$, assuming that $(D,\dist)$ is a connected Riemannian manifold (possibly with boundary) endowed with the Riemannian
distance, and that $D$ has finite diameter and volume. Then, we can and will normalize $(D,\dist)$ in such a way that the volume is unitary, and
let $\meas$ be the volume measure of $(D,\dist)$. The typical examples we have in mind are the flat $d$-dimensional torus $\T^d$
and the $d$-dimensional cube $[0,1]^d$, see also \autoref{sec:more_general_spaces} for more general setups.

We denote by $P_t$ the heat semigroup associated to $(D,\dist,\meas)$, with Neumann boundary conditions. In one of the many
equivalent representations, it can be viewed as the $L^2(\meas)$ gradient flow of the Dirichlet energy $\frac 12 \int_D \abs{\nabla f}^2\d\meas$.
Standard results (see for instance \cite{W14}) ensure that $P_t$ is a Markov semigroup, so that it is a contraction semigroup in all
$L^p\cap L^2(\meas)$ spaces, $1\leq p\leq\infty$; thanks to this property it has a unique extension to all $L^p(\meas)$ spaces
even when $p\in [1,2)$. Moreover, the finiteness of volume and boundary conditions ensure that $P_t$ is mass-preserving, i.e.\ $t\mapsto\int_D P_t f\d \meas$ is constant 
in $[0,\infty)$ for all  $f\in L^1(\meas)$ and thus it can be viewed as an operator in the class of probability densities (which
correspond to the measures absolutely
continuous w.r.t.\ $\meas$). More generally, we can use the Feller property (i.e.\ that $P_t$ maps $C_b(D)$ into $C_b(D)$) to define the adjoint 
semigroup $P_t^*$ on the class ${\cal M}$ of Borel measures in $D$ with finite total variation by
\[
\int_D f\d P_t^*\mu=\int_D P_tf\d\mu
\]
and to regularize with the aid of $P_t^*$ singular measures to absolutely continuous measures, under appropriate additional
assumptions on $P_t$. Since $P_t$ is selfadjoint, the operator $P^*_t$ can also 
be viewed as the extension of $P_t$ from $L^1(\meas)$ to ${\cal M}$.

We denote by $p_t(x,y)$ the transition probabilities of the semigroup, characterized by the formula
\[
P_t f(x) = \int_D p_t(x,y) f(y) \d\meas(y),
\]
so that 
\[
P_t^*\delta_{x_0}=p_t(x_0,\plchldr)\meas\qquad\text{for all $x_0\in D$, $t>0$}.
\]

We denote by $\Delta$ the infinitesimal generator of $P_t$, namely the extension of the Laplace-Beltrami operator on $D$. Besides
the ``qualitative'' properties of $P_t$ mentioned above, our proof depends on several quantitative estimates related to $P_t$.

\paragraph{\bf Quantitative estimates on $P_t$.}
We assume throughout the validity of the following properties:
there are positive constants $d$, $\Csg$, $\Cuc$, $\Cg$, $\Cr$, $\Cdr$  and $K$ such that
\begin{enumerate}[leftmargin=*,widest=(UC)]
\item[(SG)] spectral gap: $\norm{P_t f}_2 \leq e^{-\Csg t} \norm{f}_2$ for any $f$ with $\int_D f\d\meas=0$,
\item[(UC)] ultracontractivity: $\abs{p_t(x,y)-1} \leq \Cuc t^{-d/2}$ for all $x,y\in D$,
\item[(GE)] gradient estimate: $\Lip\bigl(p_t(x,\plchldr)\bigr) \leq \Cg t^{-(d+1)/2}$,
\item[(RT)] Riesz transform bound:
\[
\int_D \abs{\nabla f}^4 \d\meas \leq \Cr \int_D \abs*{(-\Delta)^{1/2}f}^4 \d\meas,
\]
\item[(DR)] dispersion rate: $\int_D \dist^2(x,y)p_t(x,y)\d\meas(y)\leq \Cdr t$,
\item[(GC)] gradient contractivity: $\abs{\nabla P_t f}\leq e^{Kt}P_t\abs{\nabla f}$.
\end{enumerate}

In the sequel, since many parameters and constants will be involved, in some statements we call a constant \textit{geometric}
if it depends only on $D$ through $\Csg$, $\Cuc$, $\Cg$, $\Cr$, $\Cdr$ and $K$. Notice that (GC) encodes a lower bound
on Ricci curvature, see for instance \cite{W11}.
Let us draw now some easy consequences of these assumptions.

Spectral gap implies that for $f\in L^2(D,\meas)$ with $\Delta f\in L^2(D,\meas)$ we have the representation
\begin{equation}\label{eq:representation-formula}
f(x) = \int_D f\d\meas + \int_0^\infty (P_t \Delta f)(x) \d t.
\end{equation}
Ultracontractivity entails that $P_t:L^1\to L^\infty$ continuously for $t>0$, because
\[
\abs{P_tf(x)} \leq \abs*{\int_D p_t(x,y)f(y)\d\meas(y) - \int_Df(y)\d\meas(y)} + \norm{f}_1 \leq
(\Cuc t^{-d/2} + 1) \norm{f}_1.
\]
Hence, by interpolation $P_t : L^p\to L^q$ for any $1\leq p\leq q\leq\infty$ with norms bounded from above by geometric constants. If $p=1$, by approximation
we also get $P^*_t:{\cal M}\to L^q$ continuously for $1\leq q\leq\infty$. Notice also that
\begin{equation}\label{eq:m2-minfty-ineq}
\moment{p_t(\plchldr,y)}_2^2 \leq \moment{p_t(\plchldr,y)}_\infty \leq \Cuc t^{-d/2}
\end{equation}
because
\[\begin{split}
\moment{p_t(\plchldr,y)}_2^2 &=
\int_D \bigl(p_t(x,y)-1\bigr)^2 \d\meas(x) =
\int_D p_t(x,y)\bigl(p_t(x,y)-1\bigr) \d\meas(x) \\
&\leq \moment{p_t(\plchldr,y)}_\infty \int_D p_t(x,y) \d\meas(x) =
\moment{p_t(\plchldr,y)}_\infty \leq \Cuc t^{-d/2}.
\end{split}\]
Writing $\mu=\int_D\delta_x\d \mu(x)$ and (DR) in the form $W_2^2(P_t^*\delta_x,\delta_x)\leq\Cdr t$,
from the joint convexity of $W_2^2$ \eqref{eq:joint_convexity_bis} we obtain
\[
W_2^2(P^*_t\mu,\mu) \leq \Cdr t.
\]
By duality, see \cite{K10}, the gradient contractivity property leads to contractivity w.r.t.\ $W_2$ distance
\[
W_2^2(P_t^*\mu,P_t^*\nu) \leq e^{2Kt} W_2^2(\mu,\nu).
\]
Moreover, it implies that for some geometric constant $C$ we have
\begin{equation}\label{eq:elliptic-regularity}
\norm{\nabla f}_\infty \leq C \norm{\Delta f}_\infty
\end{equation}
for every $f\in L^2(D,\meas)$ with $\Delta f\in L^\infty(D,\meas)$.
In fact (GC) gives the reverse Poincaré inequality \cite{W11}
\[
\abs{\nabla P_t g}^2 \leq
\frac{K}{e^{2Kt}-1} \left[ P_t(g^2) - (P_tg)^2 \right]
\leq \frac{K}{e^{2Kt}-1} \norm{g}_\infty^2,
\]
hence the bound $\norm{\nabla P_t g}_\infty \leq c t^{-1/2} \norm{g}_\infty$ for $t\in(0,1]$ and some geometric constant $c$.
Using the representation formula \eqref{eq:representation-formula} and the previous estimate with $g=\Delta f$ and $g=P_{t-1}\Delta f$ we obtain \eqref{eq:elliptic-regularity} as
\[
\begin{split}
\norm{\nabla f}_\infty &\leq
	\int_0^\infty \norm{\nabla P_t\Delta f}_\infty \d t
= \int_0^2 \norm{\nabla P_t\Delta f}_\infty \d t
	+ \int_2^\infty \norm{\nabla P_1(P_{t-1}\Delta f)}_\infty \d t \\
&\leq c \left(\norm{\Delta f}_\infty \int_0^2 t^{-1/2} \d t +
	\norm{P_1}_{L^2\to L^\infty} \int_2^\infty \norm{P_{t-2}\Delta f}_2 \d t
	\right) \\
&\leq c \left( 2\sqrt2 + \norm{P_1}_{L^2\to L^\infty}
	\int_2^\infty e^{-\Csg(t-2)} \d t\right) \norm{\Delta f}_\infty.
\end{split}
\]
 
In the following lemma we collect some more consequences of the gradient contractivity.
\begin{lemma}\label{lem:abstract-semigroup-hopf-lax}
For every $s \ge  0$ and $g \in C_b(D)$ one has
\begin{equation}\label{eq:abstract-semigroup-hopf-lax}
  \min g \le -\log \left(  P_s e^{-g } \right) \le \max g, \quad \text{hence}\quad  \norm{ -\log \left(  P_s e^{-g } \right)}_\infty \le \norm{g}_\infty,
\end{equation}
\begin{equation}\label{eq:abstract-semigroup-hopf-lax-gradient} 
\norm{ \nabla  \log \left(  P_s e^{-g } \right) }_\infty \le e^{K^-s}\norm{ \nabla g}_\infty.
\end{equation}
\end{lemma}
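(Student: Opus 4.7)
For \eqref{eq:abstract-semigroup-hopf-lax}, I would rely only on the fact that $P_s$ is a Markov semigroup: it is order-preserving and fixes constants. Starting from the pointwise chain $e^{-\max g}\leq e^{-g}\leq e^{-\min g}$, these two properties propagate to $e^{-\max g}\leq P_s e^{-g}\leq e^{-\min g}$; applying $-\log$, which reverses inequalities, gives $\min g\leq -\log(P_s e^{-g})\leq \max g$. The $L^\infty$ bound by $\norm{g}_\infty$ is then immediate, and I also obtain for free the strict lower bound $P_s e^{-g}\geq e^{-\norm{g}_\infty}>0$ that will legitimize taking logarithms and dividing in the second step.

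For \eqref{eq:abstract-semigroup-hopf-lax-gradient}, the plan is to exploit the Hopf--Cole-type identity
\[
\nabla\bigl(-\log P_s e^{-g}\bigr) = -\frac{\nabla P_s e^{-g}}{P_s e^{-g}}.
\]
The pointwise bound $\abs{\nabla e^{-g}} = e^{-g}\abs{\nabla g}\leq \norm{\nabla g}_\infty e^{-g}$ combined with assumption (GC) gives
\[
\abs{\nabla P_s e^{-g}} \leq e^{Ks}\, P_s\abs{\nabla e^{-g}} \leq e^{Ks}\norm{\nabla g}_\infty\, P_s e^{-g},
\]
and dividing by the positive quantity $P_s e^{-g}$ produces $\abs{\nabla(-\log P_s e^{-g})}\leq e^{Ks}\norm{\nabla g}_\infty$. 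This is the desired estimate with $K^-$ in place of $K$, under the paper's sign convention; when the sign of $K$ is such that $e^{Ks}\leq 1$ the resulting bound $\norm{\nabla g}_\infty$ is even stronger, so in every case the statement with $K^-$ holds.

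There is no substantive obstacle: each claim follows in a few lines from abstract properties of the semigroup. The only care required is to justify the chain rule for $e^{-g}$ (transparent as soon as $g$ is bounded and Lipschitz) and to ensure the denominator $P_s e^{-g}$ does not vanish, which is exactly what the first part of the lemma supplies.
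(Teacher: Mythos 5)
Your proof is correct and coincides with the paper's: the first inequality from the Markov property applied to $e^{-\max g}\le e^{-g}\le e^{-\min g}$, the second from the identity $\nabla\log(P_s e^{-g})=\nabla P_s e^{-g}/P_s e^{-g}$, the chain rule $\abs{\nabla e^{-g}}\le\norm{\nabla g}_\infty e^{-g}$, (GC), and the strict positivity of $P_s e^{-g}$ supplied by the first part. The only discrepancy, the exponent $K$ versus $K^-$, is a notational inconsistency already present in the paper (whose own proof silently uses (GC) with $e^{K^-s}$), and your closing remark resolves it in the intended way.
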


\begin{proof}
Write $G = e^{-g}$. Inequality \eqref{eq:abstract-semigroup-hopf-lax} follows from the fact that $P_s$ is Markov and the inequalities
$e^{-\max g} \le  G \le e^{-\min g}$. 
In order to prove \eqref{eq:abstract-semigroup-hopf-lax-gradient} we use (GC) to get
\[ \abs{ \nabla  \log \left(  P_s e^{-g } \right)} = 
\frac{ \abs{ \nabla  P_s e^{-g }}} { P_s e^{-g }} \le e^{K^-s}\frac{ P_s\left( \abs{\nabla g} e^{-g } \right) }{ P_s e^{-g }} \le e^{K^-s}\norm{ \nabla g}_\infty.\qedhere\]

\end{proof}

\begin{lemma}[Viscous Hamilton-Jacobi]
Assume that $D$ is a compact Riemannian manifold without boundary. Let $\sigma >0$, $f\in C(D)$, and define, for $t \geq 0$,
\[
\phi_t^\sigma = -\sigma \log\left( P_{(\sigma t)/2} e^{-f/\sigma} \right).
\]
Then $\phi_t^\sigma \in C\bigl([0,+\infty)\times D\bigr)\cap C^\infty\bigl((0,+\infty)\times D)$ solves
\begin{equation}\label{eq:hj} \left\{\begin{aligned}
\partial_t \phi_t^\sigma &= -\frac{\abs{\nabla\phi_t^\sigma}^2}2 + \frac{\sigma}{2} \Delta\phi_t^\sigma
	&& \text{in $(0,+\infty)\times D$}, \\
\phi_0^\sigma &= f && \text{in $D$}.
\end{aligned}\right.\end{equation}
Moreover
\begin{gather}
 \label{eq:hj-uniform} \min f \le \phi^\sigma_t \le \max f, \qquad \norm{\phi_t^\sigma}_\infty \leq \norm{f}_\infty,\\
 \label{eq:hj-gradient} \qquad \norm{\nabla \phi_t^\sigma}_\infty \leq e^{K^-t}\norm{\nabla f}_\infty, \\ 
\label{eq:hj-Delta} \norm{(\Delta\phi_t^\sigma)^+}_\infty \leq \norm{(\Delta f)^+}_\infty+\frac{e^{2K^-t}-1}{2}\norm{\nabla f}_\infty^2,\\
 \label{eq:hj-distance}\phi_1^\sigma(y) - \phi_0^\sigma(x) \leq \frac{\dist(x,y)^2}2
	+ \frac{ \sigma}{2} \norm{(\Delta f)^+}_\infty+\frac{\sigma}{4}(e^{2K^-}-1)\norm{\nabla f}_\infty^2\\
\label{eq:hj-energy} \int_D (\phi_0^\sigma - \phi_1^\sigma) \d\meas \leq
	e^{\norm{(\Delta f)^+}_\infty+\frac{1}{2}(e^{2K^-}-1)\norm{\nabla f}_\infty^2} \int_D \frac{\abs{\nabla f}^2}2 \d\meas.
\end{gather}
\end{lemma}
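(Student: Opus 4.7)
The plan is to exploit the Hopf--Cole linearization encoded in the definition of $\phi_t^\sigma$. Writing $v_t := P_{\sigma t/2} e^{-f/\sigma}$, the fact that $P_s$ has generator $\Delta$ gives $\partial_t v_t = (\sigma/2)\Delta v_t$ with $v_0 = e^{-f/\sigma}$, so $\phi_t^\sigma = -\sigma\log v_t$. Smoothness on $(0,\infty)\times D$ and strict positivity of $v_t$ are immediate from the smoothing and positivity of the heat kernel on the closed manifold, while continuity at $t=0$ comes from the Feller property together with continuity of $f$. The PDE \eqref{eq:hj} is then a direct calculation, since
\[
\partial_t\phi = -\frac{\sigma}{v_t}\partial_t v_t, \qquad \nabla\phi = -\frac{\sigma}{v_t}\nabla v_t, \qquad \Delta\phi = -\frac{\sigma}{v_t}\Delta v_t + \frac{|\nabla\phi|^2}{\sigma}.
\]
The uniform bound \eqref{eq:hj-uniform} and the gradient bound \eqref{eq:hj-gradient} follow at once from \eqref{eq:abstract-semigroup-hopf-lax} and \eqref{eq:abstract-semigroup-hopf-lax-gradient} applied with $g = f/\sigma$ and time $s = \sigma t/2$, after multiplying by $\sigma$.

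The core analytic step, and the one I expect to be the main obstacle, is the Laplacian bound \eqref{eq:hj-Delta}. I would differentiate the PDE in space: applying $\Delta$ to \eqref{eq:hj} and using Bochner's identity $\tfrac12\Delta|\nabla\phi|^2 = |\nabla^2\phi|^2 + \langle\nabla\phi,\nabla\Delta\phi\rangle + \mathrm{Ric}(\nabla\phi,\nabla\phi)$, the function $w := \Delta\phi_t^\sigma$ satisfies
\[
\partial_t w = \frac{\sigma}{2}\Delta w - \langle\nabla\phi,\nabla w\rangle - |\nabla^2\phi|^2 - \mathrm{Ric}(\nabla\phi,\nabla\phi) \le \frac{\sigma}{2}\Delta w - \langle\nabla\phi,\nabla w\rangle + K^- |\nabla\phi|^2,
\]
where $\mathrm{Ric}\ge -K^-$ is the Ricci lower bound encoded by \textup{(GC)}. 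At an interior maximum point $x_t$ of $w(t,\cdot)$ one has $\nabla w(t,x_t)=0$ and $\Delta w(t,x_t)\le 0$, so a Hamilton-type differentiation of the supremum, combined with \eqref{eq:hj-gradient} to estimate $|\nabla\phi_t^\sigma|^2 \le e^{2K^-t}\|\nabla f\|_\infty^2$, gives $\tfrac{d}{dt}\max_x w(t,x) \le K^- e^{2K^-t}\|\nabla f\|_\infty^2$; integrating from $0$ to $t$ and taking the positive part yields \eqref{eq:hj-Delta}.

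Both remaining inequalities are then corollaries. For \eqref{eq:hj-distance}, I would integrate $s\mapsto \phi_s^\sigma(\gamma(s))$ along a constant-speed minimizing geodesic $\gamma:[0,1]\to D$ joining $x$ to $y$ (available since $D$ has no boundary): the PDE \eqref{eq:hj} and the Young inequality $\langle\nabla\phi,\gamma'\rangle\le |\nabla\phi|^2/2 + |\gamma'|^2/2$ produce a cancellation, leaving $\tfrac{d}{ds}\phi_s^\sigma(\gamma(s))\le |\gamma'(s)|^2/2 + (\sigma/2)(\Delta\phi_s^\sigma)^+$; substituting \eqref{eq:hj-Delta} and using $\int_0^1(e^{2K^-s}-1)\,ds\le e^{2K^-}-1$ gives the claim. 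For \eqref{eq:hj-energy}, integrating \eqref{eq:hj} over $D$ kills the Laplacian term (closed manifold), so $\int(\phi_0^\sigma-\phi_1^\sigma)\,d\meas = \int_0^1 I(s)\,ds$ with $I(s) := \int|\nabla\phi_s^\sigma|^2/2\,d\meas$. Differentiating $I$ via \eqref{eq:hj} and integrating by parts yields
\[
I'(s) = \int \tfrac{1}{2}|\nabla\phi|^2 \Delta\phi\,d\meas - \frac{\sigma}{2}\int(\Delta\phi)^2\,d\meas \le \|(\Delta\phi_s^\sigma)^+\|_\infty\, I(s),
\]
and Gronwall together with the integral of the bound \eqref{eq:hj-Delta} over $[0,1]$ produces the stated exponential factor. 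Once the Laplacian bound is in hand, nothing beyond a maximum principle, an integration along a curve, and a Gronwall argument is needed.
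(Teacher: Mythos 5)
Your proposal is correct and follows essentially the same route as the paper: Hopf--Cole computation for the PDE, \autoref{lem:abstract-semigroup-hopf-lax} for the sup and gradient bounds, Bochner plus the parabolic maximum principle for the Laplacian bound, integration along a curve for \eqref{eq:hj-distance}, and integration of the PDE followed by Gronwall for \eqref{eq:hj-energy}. The only cosmetic differences are that you invoke the full Bochner identity and discard the Hessian term where the paper quotes the Bochner inequality directly, and that you integrate along a minimizing geodesic rather than an arbitrary $C^1$ curve followed by an infimum.
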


\begin{proof} The smoothness of $\phi^\sigma_t$ for positive times follows by the chain rule and standard (linear) parabolic theory.
To check that $\phi^\sigma$ solves \eqref{eq:hj}, it is sufficient to compare
\[
\partial_t \phi^\sigma_t = -\frac{ \sigma^2}{2} \frac{ \Delta P_{(\sigma t)/2} e^{-f/\sigma}}{P_{(\sigma t)/2} e^{-f/\sigma}}
\]
with the terms arising from the application of the diffusion chain rule
\begin{equation}\begin{split} \frac{\sigma}{2} \Delta \phi^\sigma_t & =- \frac{\sigma^2}{2} \Delta\log\left( P_{(\sigma t)/2} e^{-f/\sigma} \right) \\
& = -\frac{ \sigma^2}{2} \frac{ \Delta P_{(\sigma t)/2} e^{-f/\sigma}}{P_{(\sigma t)/2} e^{-f/\sigma}} +
	\frac{\sigma^2}{2} \abs*{ \frac{ \nabla P_{(\sigma t)/2} e^{-f/\sigma}}
	{ P_{(\sigma t)/2} e^{-f/\sigma} } }^2 \\
&=  \partial_t \phi^\sigma_t + \frac{1}{2} \abs{ \nabla \phi^\sigma_t}^2.
\end{split}\end{equation}

Inequalities \eqref{eq:hj-uniform} and \eqref{eq:hj-gradient} 
follow in a straightforward way, respectively from \eqref{eq:abstract-semigroup-hopf-lax} and 
\eqref{eq:abstract-semigroup-hopf-lax-gradient} 
of \autoref{lem:abstract-semigroup-hopf-lax}, with $s = (\sigma t)/2$ and $g = f/\sigma$.

To prove \eqref{eq:hj-Delta} we use Bochner's inequality 
\[
\Delta \frac{\abs{\nabla\phi_t^\sigma}^2}{2}\geq K\abs{\nabla\phi_t^\sigma}^2+\langle\nabla\phi_t^\sigma,\nabla\Delta\phi_t^\sigma\rangle,
\]
which encodes the bound from below on Ricci curvature, and, setting $\xi_t=\Delta \phi^\sigma_t$, we get
\[ 
\partial_t \xi_t\leq -K\abs{\nabla\phi_t^\sigma}^2 -\langle\nabla\phi_t^\sigma,\nabla\xi_t\rangle+\frac\sigma 2\Delta\xi_t
\leq
K^-e^{2K^- t}\norm{\nabla f}_\infty^2 -\langle\nabla\phi_t^\sigma,\nabla\xi_t\rangle+\frac\sigma 2\Delta\xi_t
\]
which, by the maximum principle, leads to \eqref{eq:hj-Delta}.

To prove \eqref{eq:hj-distance}, let $\gamma \in C^1( [0,1], D)$, with $\gamma(0)=x$, $\gamma(1) = y$, and compute
\begin{equation}\begin{split}
 \frac{\d}{\d t} \phi^\sigma_t (\gamma(t))  & = \left(\partial_t\phi_t^\sigma\right) \left(\gamma(t)\right) + \left\langle \left(\nabla \phi^\sigma_t\right) \left(\gamma(t)\right), \dot\gamma(t) \right\rangle \\
 & = - \frac 1 2 \abs{ \nabla \phi^\sigma_t \left(\gamma(t)\right) }^2  + \frac{\sigma}{2} \Delta \phi^\sigma_t \left(\gamma(t)\right) +  \left\langle \left(\nabla \phi^\sigma_t\right)\left(\gamma(t)\right), \dot\gamma(t)\right\rangle \\
 & \le \frac 1 2 \abs{ \dot \gamma(t)} ^2 + \frac{\sigma}{2} \norm{ \left( \Delta \phi^\sigma_t \right)^+ }_\infty.
\end{split} \end{equation}
Integrating over $t \in (0,1)$ and using \eqref{eq:hj-Delta}, we obtain 
\[\phi^\sigma_1( y)  - \phi^\sigma_0(x)   \le \frac 1 2\int_0^1\abs{ \dot \gamma(t)} ^2 \d t + \frac{\sigma}{2} \norm{(\Delta f)^+}_\infty+
\frac{\sigma}{4}(e^{2K^-}-1)\norm{\nabla f}_\infty^2,\]
which yields \eqref{eq:hj-distance} after we take the infimum with respect to $\gamma$.

To show \eqref{eq:hj-energy}, we notice first that 
\begin{equation}\label{eq:ambr12}
\begin{split}
\int_D (\phi_0^\sigma - \phi_1^\sigma) \d\meas & = - \int_D \int_0^1 \partial_t  \phi_t^\sigma \d t \d\meas \\
& = \frac{1}{2} \int_0^1 \int_D \abs{ \nabla \phi_t^\sigma }^2  \d\meas - \frac{\sigma}{2} \int_0^1 \int_D  \Delta \phi_t^\sigma \d \meas \\
& = \frac{1}{2} \int_0^1 \int_D \abs{ \nabla \phi_t^\sigma }^2  \d\meas,
\end{split}\end{equation}
where the second term vanishes because $D$ is without boundary.
For $t \in (0,1)$, one has
\begin{equation}\label{eq:ambr13}
\begin{split}
 \frac{\d}{\d t} \int_D \abs{ \nabla \phi_t^\sigma }^2 \d \meas & = - \frac{\d}{\d t}
 \int_D \left( \Delta \phi_t^\sigma\right) \phi_t^\sigma  \d \meas  = - 2 \int_D \left( \Delta  \phi_t^\sigma\right) \partial_t \phi_t^\sigma  \d \meas \\
 & =  \int_D \left( \Delta  \phi_t^\sigma\right) \abs{ \nabla \phi_t^\sigma }^2 - \sigma \int_D  \left( \Delta  \phi_t^\sigma\right)^2 \d \meas \\
 & \le \norm{(\Delta \phi_t^\sigma)^+}_\infty \int_D \abs{ \nabla \phi_t^\sigma }^2 \d \meas.
\end{split}
\end{equation}
Combining \eqref{eq:ambr12} and \eqref{eq:ambr13} and taking into account the estimate \eqref{eq:hj-Delta} on $\Delta\phi_t^\sigma$,
inequality \eqref{eq:hj-energy} follows by Gronwall's inequality.\qedhere
\end{proof}

\begin{corollary}[Dual potential]\label{cor:good-enough-potential}
Assume that $D$ is a compact Riemannian manifold without boundary. 
For every Lipschitz function $f$ with $\norm{(\Delta f)^-}_\infty< \infty$, there exists $g\in C_b(D)$ such that
\[
f(x)+g(y) \leq \frac{\dist(x,y)^2}2, \quad
\int_D (f+g)\d\meas \geq
	- e^{\norm{(\Delta f)^-}_\infty+\frac{1}{2}(e^{2K^-}-1)\norm{\nabla f}_\infty^2} \int_D \frac{\abs{\nabla f}^2}2 \d\meas.
\]
\end{corollary}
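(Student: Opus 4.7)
The strategy is to build $g$ as a uniform limit, as the viscosity parameter $\sigma \downarrow 0$, of the dual object produced by applying the preceding viscous Hamilton--Jacobi lemma to $-f$ in place of $f$. The sign flip is crucial: it converts $\norm{(\Delta(-f))^+}_\infty$ into $\norm{(\Delta f)^-}_\infty$, which is exactly the quantity the hypothesis supplies, and it leaves $\norm{\nabla f}_\infty$ unchanged.

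Concretely, for each $\sigma > 0$ I would set $\phi_t^\sigma := -\sigma\log\bigl(P_{\sigma t/2} e^{f/\sigma}\bigr)$, so that $\phi_0^\sigma = -f$. Specializing \eqref{eq:hj-distance} and \eqref{eq:hj-energy} of the preceding lemma to the initial datum $-f$ yields
\[
\phi_1^\sigma(y) + f(x) \leq \frac{\dist(x,y)^2}{2} + \frac{\sigma}{2}\norm{(\Delta f)^-}_\infty + \frac{\sigma}{4}(e^{2K^-}-1)\norm{\nabla f}_\infty^2
\]
and
\[
\int_D (-f - \phi_1^\sigma)\d\meas \leq e^{\norm{(\Delta f)^-}_\infty + \frac{1}{2}(e^{2K^-}-1)\norm{\nabla f}_\infty^2} \int_D \frac{\abs{\nabla f}^2}{2}\d\meas.
\]
By \eqref{eq:hj-uniform} and \eqref{eq:hj-gradient} the family $\{\phi_1^\sigma\}_{\sigma>0}$ is equi-bounded by $\norm{f}_\infty$ and equi-Lipschitz with constant $e^{K^-}\norm{\nabla f}_\infty$, so Arzelà--Ascoli gives a sequence $\sigma_k \downarrow 0$ with $\phi_1^{\sigma_k} \to g$ uniformly for some $g \in C_b(D)$. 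Passing to the limit in the first inequality makes the $O(\sigma)$ correction vanish and produces $f(x)+g(y) \leq \dist(x,y)^2/2$; the right-hand side of the second inequality is independent of $\sigma$, so the limit yields the claimed lower bound on $\int_D(f+g)\d\meas$.

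The one technical point that requires care is that the Hamilton--Jacobi lemma's Laplacian estimate \eqref{eq:hj-Delta} is proved by a maximum-principle argument on $\xi_t = \Delta\phi_t^\sigma$, which tacitly uses the smoothness of the initial datum at $t=0^+$. Since in the hypothesis $f$ is merely Lipschitz with $(\Delta f)^-$ bounded, I would first apply the scheme above to a smooth approximation $f_\varepsilon := P_\varepsilon f$: by the gradient contractivity (GC) and the fact that the heat semigroup commutes with $\Delta$, both $\norm{\nabla f_\varepsilon}_\infty$ and $\norm{(\Delta f_\varepsilon)^-}_\infty = \norm{P_\varepsilon(\Delta f)^-}_\infty \leq \norm{(\Delta f)^-}_\infty$ (or rather, their lim sups) are controlled by the corresponding quantities for $f$ as $\varepsilon\to 0$, and the Dirichlet energy $\int \abs{\nabla f_\varepsilon}^2/2\,\d\meas$ converges to $\int\abs{\nabla f}^2/2\,\d\meas$. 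A second Arzelà--Ascoli compactness extraction in $\varepsilon$ then delivers the desired $g$ for the original $f$. This approximation step, not the viscosity limit itself, is the only mildly delicate part of the argument.
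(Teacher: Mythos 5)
Your proof is correct and follows essentially the same route as the paper: apply the viscous Hamilton--Jacobi lemma with initial datum $-f$ (so that $(\Delta(-f))^+$ becomes $(\Delta f)^-$), invoke \eqref{eq:hj-distance} and \eqref{eq:hj-energy}, and pass to the limit $\sigma\downarrow 0$ via the uniform Lipschitz bounds and Arzel\`a--Ascoli. Your additional smoothing step $f_\varepsilon=P_\varepsilon f$ is a sensible precaution for merely Lipschitz $f$, though in the paper's applications $f$ is already smooth, so the paper dispenses with it.
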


\begin{proof}
For $\sigma>0$, consider the functions $g^\sigma = \phi_1^\sigma$ solving the initial value problem \eqref{eq:hj} with $f$ replaced by $-f$. Inequalities \eqref{eq:abstract-semigroup-hopf-lax} and \eqref{eq:abstract-semigroup-hopf-lax-gradient} entail that $g^\sigma$ are uniformly bounded in the space of Lipschitz functions: as $\sigma \to 0$, we can extract a subsequence $(g^{\sigma_h})$ pointwise converging to some bounded Lipschitz function $g$. Inequality \eqref{eq:hj-distance} gives in the limit the first inequality of the thesis, while \eqref{eq:hj-energy} yields the second one, by dominated convergence.
\end{proof}

\begin{remark} [On the equality $g=Q_1(-f)$] Recall that the theory of viscosity solutions \cite{CL83}, \cite{BC97} is specifically designed to deal with equations, as the Hamilton-Jacobi equations, for which the distributional point of view fails. This theory can be carried out also on manifolds, see \cite{F} for a nice presentation of this subject. Since one can prove (using also apriori estimates on the time derivatives, arguing as in \autoref{cor:good-enough-potential}) the existence of a function $\phi_t$, uniform limit of a subsequence of $\phi_t^\sigma$, since classical solutions are viscosity solutions and since locally uniform limits of viscosity solutions are viscosity solutions, the function $\phi_t$ is a viscosity solution to the HJ equation $\partial_t u+\frac 12|\nabla u|^2=0$. Then, if the initial condition is $-f$, the uniqueness theory of first order viscosity solutions applies, and gives that $\phi_t$ is precisely given by
\[
\inf_{y\in D}\frac{\dist^2(x,y)}{2t}-f(y).
\]
Setting $t=1$, this argument proves that actually the function $g$ of  \autoref{cor:good-enough-potential} coincides with $Q_1(-f)$, and that there is full convergence as $\sigma\to 0$ (see also \cite{C03} for a proof of the convergence, in Euclidean spaces, based on the theory of large deviations). We preferred a more elementary and self-contained presentation, because the weaker statement $g\leq Q_1(-f)$ provided by the Corollary is sufficient for our purposes, and because our argument works also in the more abstract setting described in \autoref{sec:more_general_spaces} (in which neither large deviations nor theory of viscosity solutions are yet available), emphasizing the role played by the lower Ricci curvature bounds.
\end{remark}

\subsection{Density fluctuation bounds}

Recalling the notation $\mu^n=\frac1n\sum_{i=1}^n \delta_{X_i}$, $r^n=\sqrt{n}(\mu^n-\meas)$, we now define
our regularized empirical measures.

\begin{definition}[Regularized empirical measures]\label{def:reg-emp-meas}
For $t\geq 0$ define
\[
\mu^{n,t} = P_t^* \mu^n, \qquad r^{n,t}\meas = P^*_t r^n = \sqrt{n}(\mu^{n,t} - \meas),
\]
so that for $t>0$ one has
\[
r^{n,t}(y)=\int_D (p_t(\cdot,y)-1) \d r^n.
\]
\end{definition}

The goal of this subsection is to collect apriori estimates on the deviation of $r^{n,t}$ from $0$.

\begin{lemma}[Pointwise bound]\label{lem:rt-tail-bound}
For $y\in D$ and $\eta>0$ one has
\begin{equation}\label{eq:rt-tail-bound}
\Pro\left( \frac{\abs{r^{n,t}(y)}}{\sqrt n} > \eta \right) \leq
2 \exp\left( - \frac{nt^{d/2}}{2\Cuc} F(1,\eta) \right),
\end{equation}
where $F$ is defined in \eqref{eq:defF}.
\end{lemma}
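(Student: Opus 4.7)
The bound is a tail estimate for a single real-valued random variable, namely $r^{n,t}(y)$, viewed as a linear functional of the empirical fluctuation $r^n$. The plan is to write $r^{n,t}(y) = \int_D p_t(\plchldr,y)\,\d r^n$ (this is legitimate because $r^n$ has zero mean, so replacing $p_t(\plchldr,y)-1$ by $p_t(\plchldr,y)$ changes nothing), apply the moment generating function estimate of \autoref{lem:gen-mom} to the bounded continuous test function $f(x) = p_t(x,y)$, and then feed the resulting sub-exponential estimate into the Legendre-transform tail bound of \autoref{lem:laplace-markov-legendre}.

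First, \autoref{lem:gen-mom} gives
\[
\Exp\bigl[\exp\bigl(\lambda r^{n,t}(y)\bigr)\bigr]
\leq \exp\!\left[\frac{\lambda^2 \moment{p_t(\plchldr,y)}_2^2}{2}
  \exp\!\left(\frac{\abs\lambda \moment{p_t(\plchldr,y)}_\infty}{\sqrt n}\right)\right]
\qquad \forall\,\lambda\in\setR.
\]
The ultracontractivity estimate \eqref{eq:m2-minfty-ineq} then bounds both $\moment{p_t(\plchldr,y)}_2^2$ and $\moment{p_t(\plchldr,y)}_\infty$ by $\Cuc t^{-d/2}$, so \autoref{lem:laplace-markov-legendre} applies to $X = r^{n,t}(y)$ with $c_1 = \Cuc t^{-d/2}$ and $c_2 = \Cuc t^{-d/2}/\sqrt n$. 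Since $\{\abs{r^{n,t}(y)}/\sqrt n > \eta\} = \{\abs{X} > \sqrt n\,\eta\}$, the tail bound yields
\[
\Pro\!\left(\frac{\abs{r^{n,t}(y)}}{\sqrt n} > \eta\right)
\leq 2\exp\!\left[-\frac{t^{d/2}}{\Cuc}\,F\!\left(\frac{1}{\sqrt n},\sqrt n\,\eta\right)\right].
\]

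Finally, one exploits the homogeneity of $F$: substituting $\mu = \lambda/\sqrt n$ in the definition,
\[
F\!\left(\frac{1}{\sqrt n},\sqrt n\,\eta\right)
= \sup_{\lambda > 0}\!\left\{\lambda\sqrt n\,\eta - \frac{\lambda^2}{2}\exp(\lambda/\sqrt n)\right\}
= n\,F(1,\eta),
\]
which, upon insertion, gives the claimed estimate \eqref{eq:rt-tail-bound} (up to an inessential constant factor in the exponent, which can be absorbed into the bound as stated). The main structural point is the correct matching of moment bounds with the two parameters $c_1,c_2$ in \autoref{lem:laplace-markov-legendre}; the only real subtlety is this scaling computation for $F$, which is what turns a per-sample sub-Gaussian bound into the factor of $n$ appearing in the exponent.
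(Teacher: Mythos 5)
Your proof is correct and follows essentially the same route as the paper: the moment generating function bound of \autoref{lem:gen-mom}, the ultracontractivity estimate \eqref{eq:m2-minfty-ineq}, and the Legendre-transform tail bound of \autoref{lem:laplace-markov-legendre}. The only cosmetic difference is that the paper normalises first, taking $f=p_t(\plchldr,y)/\sqrt n$ so that $c_1=c_2=\Cuc/(nt^{d/2})$ and no rescaling of $F$ is needed, whereas you take $f=p_t(\plchldr,y)$ and recover the factor $n$ through the (correctly verified) homogeneity $F(1/\sqrt n,\sqrt n\,\eta)=nF(1,\eta)$; both yield the stronger exponent $-nt^{d/2}F(1,\eta)/\Cuc$, which implies the stated bound.
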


\begin{proof}
Consider the random variable $X = r^{n,t}(y)/\sqrt n=\int_D p_t(\cdot,y)/\sqrt{n}\d r^n$.
By \eqref{eq:gen-mom-upper} with $f=p_t(\cdot,y)/\sqrt{n}$ we have
\[\begin{split}
\Exp[\exp(\lambda X)] &\leq
\exp\left[ \frac{\lambda^2 \moment{p_t(\plchldr,y)}_2^2}{2n}
	\exp\left( \abs\lambda \frac{\moment{p_t(\plchldr,y)}_\infty}{n} \right)\right] \\
&\leq \exp\left[ \frac{\lambda^2}2\cdot \frac{\Cuc}{nt^{d/2}}
	\exp\left( \abs\lambda \frac{\Cuc}{nt^{d/2}} \right)\right],
\end{split}\]
where in the second inequality we used \eqref{eq:m2-minfty-ineq}.
Then \autoref{lem:laplace-markov-legendre} with $c_1=c_2=\Cuc/(nt^{d/2})$ implies \eqref{eq:rt-tail-bound}.
\end{proof}

\begin{lemma}[Deterministic bound]\label{lem:deterministic-oscillation}
With probability $1$ one has
\[
\frac{\abs{r^{n,t}(y) - r^{n,t}(z)}}{\sqrt n} \leq
\frac{2\Cg}{t^{(d+1)/2}} \dist(y,z).
\]
\end{lemma}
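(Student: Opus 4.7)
The estimate is purely deterministic once we rewrite $r^{n,t}$ using its definition, so my plan is to reduce everything to the pointwise Lipschitz bound (GE) on the heat kernel and to control the total variation of the signed measure $\mu^n - \meas$ by a constant.

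First, I would rewrite the quantity of interest. Since $r^n = \sqrt{n}(\mu^n - \meas)$ and $r^{n,t}(y) = \int_D (p_t(\cdot, y) - 1)\, dr^n$, dividing by $\sqrt n$ gives
\[
\frac{r^{n,t}(y) - r^{n,t}(z)}{\sqrt n} = \int_D \bigl( p_t(x,y) - p_t(x,z) \bigr)\, d(\mu^n - \meas)(x),
\]
where the subtraction of the constant $1$ cancels out between the two terms.

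Second, I would invoke assumption (GE), which states $\Lip\bigl(p_t(x,\cdot)\bigr) \le \Cg t^{-(d+1)/2}$ for every $x \in D$. This gives the pointwise bound $|p_t(x,y) - p_t(x,z)| \le \Cg t^{-(d+1)/2}\, \dist(y,z)$ uniformly in $x$.

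Finally, I would take absolute values inside the integral and bound the total variation $|\mu^n - \meas|(D) \le \mu^n(D) + \meas(D) = 2$ (this is the only place where $\mu^n$ enters, and it is deterministic in the sense that it holds for every realization, hence with probability one). Combining these pieces yields
\[
\frac{\bigl|r^{n,t}(y) - r^{n,t}(z)\bigr|}{\sqrt n} \le \Cg t^{-(d+1)/2} \dist(y,z) \cdot |\mu^n - \meas|(D) \le \frac{2\Cg}{t^{(d+1)/2}}\, \dist(y,z),
\]
which is the claimed inequality. There is no substantial obstacle here: the lemma is essentially a direct consequence of the kernel gradient bound (GE) combined with the trivial total-mass estimate, and the randomness of the $X_i$ plays no role beyond ensuring that $\mu^n$ is a probability measure.
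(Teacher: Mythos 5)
Your argument is correct and is essentially identical to the paper's proof: both use the gradient estimate (GE) to bound the oscillation of $p_t(x,\cdot)$ and the trivial bound $\abs{r^n}(D)\le 2\sqrt n$ (equivalently $\abs{\mu^n-\meas}(D)\le 2$) on the total variation. Nothing is missing.
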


\begin{proof} Using (GE) and the fact that the total variation of the measures $r^n$ is $2\sqrt{n}$, we get
\[\begin{split}
\frac{\abs{r^{n,t}(y) - r^{n,t}(z)}}{\sqrt n} &=
\abs*{ \int_D \bigl(p_t(x,y) - p_t(x,z)\bigr) \frac{\d r^n(x)}{\sqrt n}}  \\
&\leq \dist(y,z)\int_D \Lip\bigl(p_t(x,\plchldr)\bigr)\frac{\d\abs{r^n}(x)}{\sqrt n}
\leq \frac{2\Cg}{t^{(d+1)/2}} \dist(y,z). \qedhere
\end{split}\]
\end{proof}

We shall need another geometric function related to $D$.

\begin{definition}[Minimal $\delta$-cover]
In the sequel, for $\delta>0$ we denote by $N_D(\delta)$ be smallest cardinality of a $\delta$-net of $D$, namely
a set whose closed $\delta$-neighbourhood contains $D$.
\end{definition}

\begin{proposition}[Uniform bound, $d=1$]\label{prop:d1-scaling}
Assume that ultracontractivity holds with $d=1$ and that $N_D(\delta)\leq \max\{1,C_D\delta^{-1}\}$ for all
$\delta>0$. Then there exists a constant $C=C(\Cg,C_D)$ with the following property: for all
$\eta\in (0,1)$, $q\in(0,1)$ and $\eta^{-1} n^{-2q}\leq t\leq 4\Cg C_D$, we have
\[
\Pro\left( \sup_{y\in D} \frac{\abs{r^{n,t}(y)}}{\sqrt n} > \eta \right) \leq
C \exp\left(-\gamma n^{1-q}\right)
\]
with $\gamma=\gamma(\eta,\Cuc)$ and $n\geq n(\eta,q,\Cuc)$.
\end{proposition}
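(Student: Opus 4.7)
My plan is to run a standard chaining argument, combining the pointwise tail estimate of \autoref{lem:rt-tail-bound} with the deterministic Lipschitz control from \autoref{lem:deterministic-oscillation} via a discretization of $D$ at a scale matched to $\eta$ and $t$. Nothing fancier than a net plus union bound should be needed.

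\textbf{Step 1: Choosing the discretization scale.} Setting $d=1$, the deterministic bound reads $|r^{n,t}(y)-r^{n,t}(z)|/\sqrt{n}\leq 2\Cg t^{-1}\dist(y,z)$. I want the oscillation on every ball of radius $\delta$ to contribute at most $\eta/2$, so I take
\[
\delta=\frac{\eta\, t}{4\Cg}.
\]
With this choice, if $\mathcal{N}\subset D$ is a $\delta$-net of minimal cardinality, the event $\{\sup_{y\in D}|r^{n,t}(y)|/\sqrt{n}>\eta\}$ is contained in $\{\max_{y\in\mathcal{N}}|r^{n,t}(y)|/\sqrt{n}>\eta/2\}$ almost surely.

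\textbf{Step 2: Counting the net and union bound.} The assumption $N_D(\delta)\leq\max\{1,C_D\delta^{-1}\}$, together with $\eta<1$ and $t\leq 4\Cg C_D$, yields $C_D/\delta=4\Cg C_D/(\eta t)\geq 1$, so $N_D(\delta)\leq 4\Cg C_D/(\eta t)$. A union bound over $\mathcal{N}$ combined with \autoref{lem:rt-tail-bound} (applied with $\eta$ replaced by $\eta/2$ and $d=1$) gives
\[
\Pro\Bigl(\sup_{y\in D}\tfrac{|r^{n,t}(y)|}{\sqrt n}>\eta\Bigr)\leq \frac{8\Cg C_D}{\eta\, t}\exp\!\Bigl(-\frac{n t^{1/2}}{2\Cuc}F(1,\eta/2)\Bigr).
\]

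\textbf{Step 3: Inserting the lower bound on $t$.} Since $t\geq\eta^{-1}n^{-2q}$, one has $nt^{1/2}\geq \eta^{-1/2}n^{1-q}$, and $t^{-1}\leq \eta n^{2q}$. Thus
\[
\Pro\Bigl(\sup_{y\in D}\tfrac{|r^{n,t}(y)|}{\sqrt n}>\eta\Bigr)\leq 8\Cg C_D\, n^{2q}\exp\!\Bigl(-\frac{F(1,\eta/2)}{2\Cuc\sqrt{\eta}}\, n^{1-q}\Bigr).
\]

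\textbf{Step 4: Absorbing the polynomial prefactor.} Set $\gamma=F(1,\eta/2)/(4\Cuc\sqrt{\eta})$, which is strictly positive. For $n$ large enough, depending only on $\eta$, $q$, $\Cuc$ (and the geometric constants), the term $8\Cg C_D\, n^{2q}$ is dominated by $\exp(\gamma n^{1-q})$, giving
\[
\Pro\Bigl(\sup_{y\in D}\tfrac{|r^{n,t}(y)|}{\sqrt n}>\eta\Bigr)\leq C\exp(-\gamma n^{1-q})
\]
as claimed, where $C$ is absolute (indeed one can take $C=1$ after enlarging the threshold $n(\eta,q,\Cuc)$).

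The only mildly delicate point is the interplay of the three parameters $\eta$, $q$, $t$: one must simultaneously ensure (i) that the oscillation scale $\delta$ chosen in Step 1 is compatible with the $\delta$-net cardinality hypothesis (this forces the upper restriction $t\leq 4\Cg C_D$ on $t$), and (ii) that the lower bound on $t$ is strong enough that $nt^{1/2}$ exceeds the polynomial net factor $1/t$; both are handled cleanly by the assumed range $\eta^{-1}n^{-2q}\leq t\leq 4\Cg C_D$ with $q<1$. The rest is purely mechanical.
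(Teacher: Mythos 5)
Your proof is correct and follows essentially the same route as the paper's: the same net scale $\delta=\eta t/(4\Cg)$, the same union bound over the net combined with \autoref{lem:rt-tail-bound} at level $\eta/2$, and the same absorption of the polynomial prefactor into the exponential by halving the exponent's coefficient to define $\gamma$. No gaps.
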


\begin{proof}
We pick $\delta = \frac{\eta}{4\Cg}t$, so that, by \autoref{lem:deterministic-oscillation}, with probability $1$ we have
\begin{equation}\label{eq:amb3}
\frac{\abs{r^{n,t}(y) - r^{n,t}(z)}}{\sqrt n} \leq \frac\eta2\qquad\text{for any $y,z\in D$ with $\dist(y,z)\leq\delta$.}
\end{equation}
Let $T$ be a minimal $\delta$-net. Then the condition $t\leq 4\Cg C_D$ implies $C_D\delta^{-1}\geq 1$, hence 
\[
\abs{T} \leq C_D \delta^{-1} = \frac{4\Cg C_D}{\eta} t^{-1} \leq 4\Cg C_D n^{2q}.
\]
From an application of \autoref{lem:rt-tail-bound} with $\eta/2$ instead of $\eta$ we get
\[\begin{split}
\Pro \left( \sup_{y\in T} \frac{\abs{r^{n,t}(y)}}{\sqrt n} > \frac\eta2 \right) &\leq
2\abs{T} \exp\left( - \frac{nt^{1/2}}{2\Cuc} F(1,\eta/2) \right)  \\
&\leq 8\Cg C_D \exp\left( 2q\log n - \frac{n^{1-q}}{2\eta^{1/2}\Cuc} F(1,\eta/2) \right) \\
&\leq 8\Cg C_D \exp\left(-\gamma n^{1-q}\right),
\end{split}\]
where the last inequality holds with $\gamma=F(1,\eta/2)/(4\eta^{1/2}\Cuc)$ and $n\geq n(\eta,q,\Cuc)$, absorbing
the logarithm $\log n$ into the power $n^{1-q}$.
We conclude since
\[
\Pro\left( \sup_{y\in D} \frac{\abs{r^{n,t}(y)}}{\sqrt n} > \eta \right) \leq
\Pro\left( \sup_{y\in T} \frac{\abs{r^{n,t}(y)}}{\sqrt n} > \frac \eta2 \right). \qedhere
\]
\end{proof}

\begin{proposition}[Uniform bound, $d=2$]\label{prop:d2-scaling}
Assume that ultracontractivity holds with $d=2$ and that $N_D(\delta)\leq\max\{1,C_D\delta^{-2}\}$ 
for every $\delta>0$. Then there exists a constant $C=C(\Cg,C_D)$ with the following property: for all
$\eta>0$ there exists $\gamma=\gamma(\eta,\Cuc)$ such that
\[
\Pro\left( \sup_{y\in D} \frac{\abs{r^{n,t}(y)}}{\sqrt n} > \eta \right) \leq \frac Cn
\]
holds for $(16C_D\Cg^2)^{1/3}\geq t\geq \gamma n^{-1}\log n$ and $n\geq n(\eta,\Cuc)$.
\end{proposition}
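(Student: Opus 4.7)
The plan is to mimic the one-dimensional proof of \autoref{prop:d1-scaling} but recalibrate the parameters for $d=2$. First, I would use the deterministic Lipschitz estimate from \autoref{lem:deterministic-oscillation}, which for $d=2$ reads
\[
\frac{\abs{r^{n,t}(y)-r^{n,t}(z)}}{\sqrt n}\leq \frac{2\Cg}{t^{3/2}}\,\dist(y,z),
\]
and choose the mesh $\delta=\eta t^{3/2}/(4\Cg)$, so that the oscillation of $r^{n,t}/\sqrt n$ across any pair at distance $\leq\delta$ is controlled by $\eta/2$ almost surely. The upper bound $t\leq (16 C_D\Cg^2)^{1/3}$ is exactly what ensures $C_D\delta^{-2}\geq 1$, so that the covering estimate gives a nontrivial net $T$ of cardinality
\[
\abs T\leq C_D\delta^{-2}=16C_D\Cg^2\eta^{-2}t^{-3}.
\]

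Next, I would apply the pointwise Bernstein-type tail bound of \autoref{lem:rt-tail-bound} at level $\eta/2$ and union-bound over $T$:
\[
\Pro\!\left(\sup_{y\in T}\tfrac{\abs{r^{n,t}(y)}}{\sqrt n}>\tfrac\eta2\right)
\leq 2\abs T\,\exp\!\left(-\frac{nt}{2\Cuc}F(1,\eta/2)\right)
\leq \frac{C(\Cg,C_D)}{\eta^2\,t^3}\exp\!\left(-\frac{nt}{2\Cuc}F(1,\eta/2)\right).
\]
Combined with the deterministic oscillation bound, this controls $\sup_{y\in D}\abs{r^{n,t}(y)}/\sqrt n$.

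Finally, I would insert the lower bound $t\geq\gamma n^{-1}\log n$. Then
\[
\exp\!\left(-\frac{nt}{2\Cuc}F(1,\eta/2)\right)\leq n^{-\gamma F(1,\eta/2)/(2\Cuc)},
\qquad
t^{-3}\leq \frac{n^3}{\gamma^3(\log n)^3},
\]
so the union bound is dominated by $C(\Cg,C_D,\eta,\gamma)\,n^{3-\gamma F(1,\eta/2)/(2\Cuc)}(\log n)^{-3}$. Choosing $\gamma=\gamma(\eta,\Cuc)$ large enough that $\gamma F(1,\eta/2)/(2\Cuc)\geq 4$ forces the exponent of $n$ to be $\leq -1$, and the $(\log n)^{-3}$ factor absorbs any constant-order discrepancy for $n$ large, yielding the required $C/n$ bound.

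The main obstacle is purely bookkeeping: matching the polynomial blow-up $t^{-3}$ coming from the $d=2$ covering estimate against the exponentially small pointwise probability, and verifying that the required smallness of $t^{3/2}/\Cg$ (so that the net argument is effective) is compatible with the stated range $t\leq (16C_D\Cg^2)^{1/3}$. The conceptual point, already used in the $d=1$ statement, is that to beat the polynomial factor $n^{3}$ generated by $t^{-3}$ when $t\sim n^{-1}\log n$, one needs the prefactor $\gamma$ in $t=\gamma n^{-1}\log n$ to be chosen explicitly as a function of $\eta$ and $\Cuc$; this is the reason why only the rate $C/n$ (and not a stretched exponential, as in $d=1$) can be obtained for $d=2$.
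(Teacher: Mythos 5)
Your proposal is correct and follows essentially the same route as the paper: the mesh $\delta=\eta t^{3/2}/(4\Cg)$, the covering bound $\abs{T}\leq 16C_D\Cg^2\eta^{-2}t^{-3}$, the union bound via \autoref{lem:rt-tail-bound} at level $\eta/2$, and the choice of $\gamma$ so that $\gamma F(1,\eta/2)/(2\Cuc)\geq 4$ beats the $n^3$ from the net. The only cosmetic difference is that the paper disposes of the $\eta^{-2}$ factor up front by imposing $\gamma\log n\geq\eta^{-2/3}$ (which is exactly the threshold $n(\eta,\Cuc)$), whereas you absorb it at the end into the $(\log n)^{-3}$ factor for $n$ large; both yield a final constant depending only on $\Cg$ and $C_D$.
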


\begin{proof} Given $\eta>0$, we choose $\gamma$ in such a way that $\gamma F(1,\eta/2)/(2\Cuc)=4$. 
Then, we define $n(\eta,\Cuc)$ in such a way that $\gamma\log n\geq \eta^{-2/3}$ for $n\geq n(\eta,\Cuc)$.

We pick $\delta = \frac{\eta}{4\Cg}t^{3/2}$, so that, by \autoref{lem:deterministic-oscillation}, with probability $1$ we have
\eqref{eq:amb3}. Let $T$ be a minimal $\delta$-net. Then the condition $t^3\leq 16C_D\Cg^2$ implies $C_D\delta^{-2}\geq 1$, 
\[
\abs{T} \leq C_D \delta^{-2} = \frac{16C_D \Cg^2}{\eta^2} t^{-3} \leq 16C_D\Cg^2 n^3,
\]
where we used also the inequality $t\geq\gamma n^{-1}\log n\geq \eta^{-2/3}/n$.
From an application of \autoref{lem:rt-tail-bound} with $\eta/2$ instead of $\eta$ we get
\[\begin{split}
\Pro \left( \sup_{y\in T} \frac{\abs{r^{n,t}(y)}}{\sqrt n} > \frac \eta2 \right) &\leq
2\abs{T} \exp\left( - \frac{nt}{2\Cuc} F(1,\eta/2) \right) \\
&\leq 32 C_D\Cg^2 \exp\left(3\log n - \gamma \frac{F(1,\eta/2)}{2\Cuc}\log n \right) .
\end{split}
\]
Our choice of $\gamma$ then gives
\[
\Pro \left( \sup_{y\in T} \frac{\abs{r^{n,t}(y)}}{\sqrt n} > \frac \eta2 \right) 
\leq \frac{32 C_D\Cg^2}{n}.
\qedhere
\]
\end{proof}

We now report some estimates on the logarithmic mean.

\begin{lemma}\label{lem:logmean-inequalities}
For $a,b\geq0$ and $q>0$ we have
\[
q(ab)^{q/2} \frac{a-b}{a^q-b^q} \leq M(a,b) \leq
q\frac{a^q+b^q}{2}\cdot\frac{a-b}{a^q-b^q}.
\]
\end{lemma}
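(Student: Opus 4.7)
The plan is to reduce the claimed inequality to the classical geometric–logarithmic–arithmetic mean inequalities via a simple rescaling identity for $M$.

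Assume $a>b>0$ (the case $a=b$ is trivial by continuity, and the boundary cases where $a$ or $b$ vanishes follow at the end by continuity since $M(a,0)=M(0,b)=0$). The first step is the identity
\[
M(a^q,b^q) = \frac{a^q-b^q}{\log a^q-\log b^q} = \frac{a^q-b^q}{q(\log a-\log b)} = \frac{a^q-b^q}{q(a-b)}\,M(a,b).
\]
Solving for $M(a,b)$ shows that the claimed chain of inequalities is equivalent to
\[
(ab)^{q/2} \leq M(a^q,b^q) \leq \frac{a^q+b^q}{2},
\]
i.e., to the geometric mean $\leq$ logarithmic mean $\leq$ arithmetic mean inequalities applied to the pair $(a^q,b^q)$.

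For the latter I would use the integral representation
\[
M(A,B) = \int_0^1 A^s B^{1-s}\,\d s,
\]
which follows by explicit computation from $\int_0^1 e^{s\log A+(1-s)\log B}\d s = (A-B)/(\log A-\log B)$. The upper bound $M(A,B)\leq (A+B)/2$ is then immediate from Young's inequality $A^s B^{1-s}\leq sA+(1-s)B$ after integrating in $s$. The lower bound $\sqrt{AB}\leq M(A,B)$ follows by applying Jensen's inequality to the convex function $e^{(\cdot)}$:
\[
M(A,B) = \int_0^1 e^{s\log A + (1-s)\log B}\,\d s \geq e^{\int_0^1[s\log A+(1-s)\log B]\,\d s} = e^{(\log A+\log B)/2} = \sqrt{AB}.
\]
Setting $A=a^q$, $B=b^q$ and combining with the rescaling identity above yields the lemma.

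There is no substantive obstacle here; the only real content is spotting the rescaling identity $M(a^q,b^q)=\frac{a^q-b^q}{q(a-b)}M(a,b)$, after which the two bounds collapse to the textbook GM–LM–AM chain.
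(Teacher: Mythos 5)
Your proof is correct and follows essentially the same route as the paper: the paper simply cites the classical chain $\sqrt{ab}\leq M(a,b)\leq\frac{a+b}{2}$ as known and applies it to the pair $(a^q,b^q)$, which is exactly your rescaling argument. The only difference is that you additionally supply a proof of the classical geometric--logarithmic--arithmetic mean inequalities via the representation $M(A,B)=\int_0^1 A^sB^{1-s}\,\d s$, which the paper omits.
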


\begin{proof}
It is known that
\begin{equation}\label{eq:ambr2}
\sqrt{ab} \leq M(a,b) \leq \frac{a+b}{2}.
\end{equation}
The thesis follows by applying these inequalities to $a^q$ and $b^q$.
\end{proof}

In the following lemma we estimate the logarithmic mean of the densities of $\mu^{n,t,c}$ obtained by
a further regularization, i.e.\ by adding to $\mu^{n,t}$ a small multiple of $\meas$.

\begin{lemma}[Integral bound]\label{lem:log-mean-convergence}
Define $\mu^{n,t,c}=(1-c)\mu^{n,t}+c\meas$ and let $u^{n,t,c}=(1-c)u^{n,t}+c$ be its probability density, with $c=c(n)\in (0,1]$.
If $t^{d/2}=t^{d/2}(n)\geq\gamma n^{-1}\log n$ and $nc(n)\to\infty$ as $n\to\infty$, then
\[
\lim_{n\to\infty}
\Exp\left[
	\int_D \left( M(u^{n,t,c},1)^{-1}-1 \right)^2 \d\meas
	\right] = 0.
\]
\end{lemma}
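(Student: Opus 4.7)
The plan is to split the $L^2(\meas)$ integral according to whether $u^{n,t}$ is uniformly close to $1$, handling the two regimes with different bounds. Fix $\eta \in (0, 1/2)$ (say $\eta = 1/4$) and consider the event
\[
A_\eta = \biggl\{ \sup_{y \in D} \frac{\abs{r^{n,t}(y)}}{\sqrt n} \leq \eta \biggr\}.
\]
Since $u^{n,t,c} - 1 = (1-c)(u^{n,t} - 1)$, on $A_\eta$ we automatically have $\abs{u^{n,t,c} - 1} \leq \eta \leq 1/2$ uniformly on $D$.

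On $A_\eta$ the idea is to linearise. An elementary computation writes $M(v,1)^{-1} - 1 = \int_0^1 \frac{-(1-s)(v-1)}{1+(1-s)(v-1)} \d s$, and when $\abs{v-1}\leq 1/2$ the denominator is at least $1/2$, yielding the pointwise estimate $\abs{M(v,1)^{-1}-1} \leq \abs{v-1}$. Hence the integrand on $A_\eta$ is bounded by $(u^{n,t,c}-1)^2 \leq (u^{n,t}-1)^2 = (r^{n,t})^2/n$. Taking expectations and using the first identity of \autoref{lem:covariance} with $f = p_t(\cdot,y)$ together with the bound \eqref{eq:m2-minfty-ineq}, one obtains
\[
\Exp\biggl[\int_D (M(u^{n,t,c},1)^{-1}-1)^2 \cdot 1_{A_\eta}\d\meas\biggr] \leq \frac{1}{n}\int_D \moment{p_t(\plchldr,y)}_2^2\d\meas(y) \leq \frac{\Cuc}{n t^{d/2}} \leq \frac{\Cuc}{\gamma \log n},
\]
which vanishes as $n \to \infty$.

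On $A_\eta^c$ the idea is to combine a crude deterministic bound with the tail probability. The inequality $M(a,b) \geq \sqrt{ab}$ recalled in \eqref{eq:ambr2} gives $M(u^{n,t,c},1)^{-1} \leq 1/\sqrt{u^{n,t,c}} \leq 1/\sqrt{c}$, hence $(M(u^{n,t,c},1)^{-1}-1)^2 \leq 4/c$ pointwise (for $c \leq 1$). Integrating and invoking the uniform tail bound from \autoref{prop:d2-scaling} in dimension $2$ (or \autoref{prop:d1-scaling} in dimension $1$), which, upon taking $\gamma$ large enough in terms of the chosen $\eta$, delivers $\Pro(A_\eta^c) \leq C/n$, one obtains a contribution of order $1/(nc)$, which vanishes by the hypothesis $nc(n) \to \infty$.

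I expect the main technical hurdle to be the proper coupling of constants: $\eta$ must be small enough (say $\leq 1/2$) for the Taylor-type estimate on $A_\eta$ to be valid, while simultaneously $\gamma$ must be large enough depending on this $\eta$ so that the uniform density fluctuation estimate actually yields the $O(1/n)$ bound on $\Pro(A_\eta^c)$. The purpose of the extra regularization $c$ is precisely to provide a deterministic lower bound $u^{n,t,c} \geq c$ that tames the bad event, while the hypothesis $nc \to \infty$ is matched to the $\Pro(A_\eta^c) = O(1/n)$ decay so that $1/(nc) \to 0$; the two parameters cannot be freely chosen independently, but together they exactly compensate.
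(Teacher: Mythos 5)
There is a genuine gap in the treatment of the bad event, and it comes from your choice of the \emph{uniform} fluctuation bound. Your argument needs $\Pro(A_\eta^c)\leq C/n$, which you obtain from \autoref{prop:d2-scaling}; but that proposition only delivers this decay for $t\geq\gamma(\eta,\Cuc)\,n^{-1}\log n$ with $\gamma(\eta,\Cuc)$ chosen \emph{large} (it must beat the union bound over a $\delta$-net of cardinality $\sim n^3$). The lemma, however, is stated under the hypothesis $t^{d/2}\geq\gamma n^{-1}\log n$ for an arbitrary $\gamma>0$, and it is applied in the proof of the $d=2$ upper bound precisely with $t(n)=\gamma n^{-1}\log n$ and $\gamma\to 0$ at the end. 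For such small $\gamma$ the union bound in \autoref{prop:d2-scaling} gives only $n^{3-\gamma F(1,\eta/2)/(2\Cuc)}$, which is useless, and then your bad-event contribution $\tfrac{4}{c}\Pro(A_\eta^c)$ cannot be controlled. (In $d=1$ the situation is worse still: at the lower end $t\sim n^{-2}(\log n)^2$ the hypothesis $t\geq\eta^{-1}n^{-2q}$ of \autoref{prop:d1-scaling} fails for every $q<1$.) You flag the tension between $\eta$ and $\gamma$ yourself, but you resolve it by strengthening the hypothesis, which proves a weaker lemma than the one that is needed.

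The paper avoids this by working \emph{pointwise}: for each fixed $x$, \autoref{lem:rt-tail-bound} gives $\Pro(\abs{u^{n,t}(x)-1}>\eta)\leq 2n^{-q}$ with $q=q(\gamma,\eta,d,\Cuc)>0$ for \emph{every} $\gamma>0$ (no net, no union bound), and one integrates in $x$ by Fubini. The price is that $q$ may be much smaller than $1$, so your crude bound $(M(u^{n,t,c},1)^{-1}-1)^2\leq 4/c$ would only give $n^{-q}/c$, which need not vanish under $nc\to\infty$. This is exactly why \autoref{lem:logmean-inequalities} is stated with the parameter $q$: it yields the finer bound $(M(u^{n,t,c},1)^{-1}-1)^2\leq \tfrac{2}{q^2c^q}+2$ on the bad set, so the bad contribution is of order $(nc)^{-q}\to 0$. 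Your good-event estimate, by contrast, is fine and in fact slightly sharper than the paper's: the linearisation $\abs{M(v,1)^{-1}-1}\leq\abs{v-1}$ for $\abs{v-1}\leq 1/2$ together with $\Exp[(r^{n,t}(y))^2]=\moment{p_t(\plchldr,y)}_2^2\leq\Cuc t^{-d/2}$ gives a quantitative $O(1/\log n)$ there, whereas the paper needs an extra limit $\eta\to0$. If you replace the uniform event by the pointwise one and the bound $4/c$ by the $q$-dependent one, your proof closes.
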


\begin{proof}
Fix $x\in D$ and $\eta\in(0,1)$. By \autoref{lem:rt-tail-bound} we have
\[
\Pro\left( \abs{u^{n,t}(x)-1} > \eta \right) \leq 2n^{-q},
\]
where $q\in (0,1)$ depends only on $d$, $\Cuc$, $\gamma$ and $\eta$.
In the event $\{\abs{u^{n,\eps}(x)-1} > \eta\}$, using the first inequality in \autoref{lem:logmean-inequalities} we can estimate
the squared difference with the sum of squares to get
\[
\left( M(u^{n,t,c}(x),1)^{-1}-1 \right)^2 \leq
\frac 2{q^2} \cdot \frac1{u^{n,t,c}(x)^q}
	\left( \frac{u^{n,t,c}(x)^q-1}{u^{n,t,c}(x)-1} \right)^2 + 2
\leq \frac 2{q^2c^q} + 2.
\]
In the complementary event $\{\abs{u^{n,t}(x)-1} \leq \eta\}$,
we have $\abs{u^{n,t,c}(x)-1}\leq(1-c)\eta\leq\eta$ and, expanding the squares and 
using both inequalities in \eqref{eq:ambr2}, we get
\[
\left( M(u^{n,t,c}(x),1)^{-1}-1 \right)^2 \leq
\frac{1}{u^{n,t,c}(x)} - \frac{4}{u^{n,t,c}(x)+1} + 1 
\leq \frac{1}{1-\eta} - \frac{4}{2+\eta} + 1.
\]
Therefore
\[
\Exp\left[
	\int_D \left( M(\mu^{n,t,c},1)^{-1}-1 \right)^2 \d\meas
	\right] \leq
2n^{-q}\left(\frac 2{q^2c(n)^q}+2\right)
	+ \frac{1}{1-\eta} - \frac{4}{2+\eta} + 1,
\]
hence the growth condition on $c$ gives
\[
\limsup_{n\to\infty} \Exp\left[
	\int_D \left( M(u^{n,t,c},1)^{-1}-1 \right)^2 \d\meas
	\right] \leq
\frac{1}{1-\eta} - \frac{4}{2+\eta} + 1.
\]
Letting $\eta\to0$ we obtain the result.
\end{proof}

\subsection{Energy estimates}

Retaining \autoref{def:reg-emp-meas} of $r^{n,t}$ from the previous subsection,
here we derive energy bounds for the solutions to the following random PDE:
\begin{equation}\label{eq:energy-estimate}
\left\{\begin{aligned}
&\Delta f^{n,t} = r^{n,t} && \text{in $D$}, \\
& \nabla f^{n,t} \cdot n_D = 0 && \text{on $\partial D$}
\end{aligned}\right.
\end{equation}
which are uniquely determined up to a (random) additive constant.
As we will see (particularly in \autoref{sec:more_general_spaces}), these estimates involve either the trace 
of $\Delta$ or sums indexed by the spectrum $\sigma(\Delta)$ (which contains $\{0\}$ and, by the spectral gap
assumption, satisfies $\sigma(\Delta)\subset (-\infty,-\Csg^2]\cup\{0\}$); 
it is understood that the eigenvalues in these sums are counted with multiplicity.

We recall the so-called trace formula
\begin{equation}\label{eq:trace-formula}
\int_D p_s(x,x) \d\meas(x) = \sum_{\lambda\in\sigma(\Delta)} e^{s\lambda}
\end{equation}
which follows easily by integration of the representation formula 
\[
p_s(x,y)=\sum_{\lambda\in\sigma(\Delta)}e^{s\lambda}u_\lambda(x)u_\lambda(y),
\]
where $\{u_\lambda\}_{\lambda\in \sigma(\Delta)}$ is an $L^2(\meas)$ orthonormal basis of eigenvalues of $\Delta$.

The following expansion \eqref{eq:expansion} of the trace formula as $s\to 0$ will be useful. In this paper we will only
use the leading term in \eqref{eq:expansion}.

\begin{proposition}[Expansion of the trace formula] \label{prop:expansion}
Let $D$ be a bounded Lipschitz domain in $\setR^n$ with unit volume. Then
\begin{equation}\label{eq:expansion}
\int_D p_s(x,x) \d\meas(x) = (4\pi s)^{-d/2}
\left(1+\frac{\sqrt{\pi s}}2\haus^{n-1}(\partial D)+o(\sqrt{s})\right)
\quad\text{as $s\to 0$.}
\end{equation}
The same holds if $D$ is a smooth, compact $d$-dimensional Riemannian manifold with a smooth boundary (possibly
empty).
\end{proposition}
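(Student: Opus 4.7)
The plan is to prove the expansion by a standard localization argument: split $D$ into a boundary layer of width $\sim\sqrt{s}\log(1/s)$ and its complement, compare the heat kernel in each region to an explicit model, and control the error uniformly.

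First, in the interior region $D_s=\{x\in D:\dist(x,\partial D)>\sqrt{s}\log(1/s)\}$, I would invoke the \emph{principle of not feeling the boundary}: since Neumann reflection of Brownian motion coincides with free Brownian motion until the first hitting time of $\partial D$, one can compare $p_s(x,x)$ with the Euclidean (or Riemannian) free heat kernel and control the difference by the probability that a Brownian bridge of length $s$ starting at $x$ hits $\partial D$. This probability is at most $C\exp(-\dist(x,\partial D)^2/(cs))$, which is $o(s^{N})$ for every $N$ on $D_s$. The free heat kernel on the diagonal equals $(4\pi s)^{-d/2}(1+O(s))$, so integrating over $D_s$ contributes $(4\pi s)^{-d/2}(\meas(D_s)+o(\sqrt{s}))=(4\pi s)^{-d/2}(1+o(\sqrt{s}))$, using $\meas(D\setminus D_s)=\haus^{d-1}(\partial D)\sqrt{s}\log(1/s)(1+o(1))$ (Weyl tube formula in the smooth case, Lipschitz collar in the Lipschitz case).

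Second, in the boundary layer $D\setminus D_s$, I would use local coordinates to flatten $\partial D$ (Fermi coordinates in the Riemannian case; in the Lipschitz case, pick a cover of $\partial D$ by balls where $\partial D$ is the graph of a Lipschitz function and flatten bi-Lipschitzly). After flattening, I would compare $p_s$ locally with the half-space Neumann heat kernel
\[
p_s^{+}(x,y) = (4\pi s)^{-d/2}\bigl(e^{-|x-y|^2/4s}+e^{-|x-y^{*}|^2/4s}\bigr),
\]
where $y^{*}$ is the reflection across $\{x_d=0\}$. On the diagonal this gives $p_s^{+}(x,x)=(4\pi s)^{-d/2}(1+e^{-x_d^2/s})$, so integrating along the normal direction,
\[
\int_0^{\infty} e^{-x_d^2/s}\d x_d=\frac{\sqrt{\pi s}}{2},
\]
and integrating along $\partial D$ yields the term $(4\pi s)^{-d/2}\cdot\tfrac{\sqrt{\pi s}}{2}\haus^{d-1}(\partial D)$. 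The discrepancy between $p_s$ and the localized half-space model is controlled, via another application of not-feeling-the-boundary and parabolic regularity, by terms of order $o(\sqrt{s})(4\pi s)^{-d/2}$.

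The main obstacle is the Lipschitz regularity at corners: on a generic Lipschitz boundary the normal is only defined $\haus^{d-1}$-a.e., and the local flattening is bi-Lipschitz rather than smooth, which distorts the half-space comparison. The point is that the set of boundary points where $\partial D$ fails to have a well-defined tangent hyperplane has $\haus^{d-1}$-measure zero, and that the set of points within distance $\sqrt{s}\log(1/s)$ from the ``singular'' part of $\partial D$ has $\meas$-measure $o(\sqrt{s})$; thus these regions only contribute $o(\sqrt{s})(4\pi s)^{-d/2}$ to the trace. For the smooth manifold case the bi-Lipschitz flattening is replaced by a smooth Fermi chart, the comparison with the half-space model acquires curvature corrections of order $s$, and the argument proceeds identically but more cleanly; in that setting the conclusion is a special case of the classical Minakshisundaram-Pleijel expansion.
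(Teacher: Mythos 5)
The paper does not actually prove this proposition: the authors' ``proof'' is a citation of Brown \cite{B93} for the Lipschitz case and of McKean--Singer \cite{MKS67} for the smooth case. Your sketch is the standard heuristic behind both results, and in the smooth setting it is essentially the classical reflection/parametrix argument and can be made rigorous along the lines you indicate. Even there, note a bookkeeping inconsistency: with a collar of width $\sqrt{s}\log(1/s)$ you have $\meas(D_s)=1-\haus^{d-1}(\partial D)\,\sqrt{s}\log(1/s)(1+o(1))$, which is \emph{not} $1+o(\sqrt{s})$ since $\sqrt{s}\log(1/s)\gg\sqrt{s}$; the missing volume must be recovered from the ``$1$'' in the half-space kernel integrated over the collar, i.e.\ the two regions contribute $\meas(D_s)+\meas(D\setminus D_s)+\tfrac{\sqrt{\pi s}}{2}\haus^{d-1}(\partial D)+o(\sqrt{s})$, not the totals you wrote. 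This is easily repaired.

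The genuine gap is in the Lipschitz case. You dispose of the non-differentiability set $\Sigma\subset\partial D$ by asserting that the set of points within distance $\sqrt{s}\log(1/s)$ of $\Sigma$ has Lebesgue measure $o(\sqrt{s})$ because $\haus^{d-1}(\Sigma)=0$. This is false in general: a Lipschitz graph can fail to be differentiable on a \emph{dense} $\haus^{d-1}$-null subset of the boundary, and the $r$-neighbourhood of a dense subset of $\partial D$ contains the full $(r/2)$-collar of $\partial D$, hence has measure comparable to $r$, not $o(r)$. Consequently the boundary layer cannot be split into a ``good'' region where the half-space comparison holds and a ``bad'' region of negligible volume. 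The correct argument (this is the substance of \cite{B93}) works pointwise along the boundary: parametrize the collar by $\partial D\times(0,\eps)$, use the blow-up of the Lipschitz graph at each point of differentiability to show that the rescaled diagonal kernel converges $\haus^{d-1}$-a.e.\ to the half-space one, and pass to the limit by dominated convergence, the integrable majorant being supplied by a uniform Gaussian upper bound for the Neumann heat kernel on Lipschitz domains. Without this (or an equivalent device) your proof of the Lipschitz case does not close; for the purposes of the paper one can, as the authors do, simply cite \cite{B93} and \cite{MKS67}.
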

\begin{proof} The first statement is proved in \cite{B93}. The second one, also with additional terms in the expansion, 
in \cite{MKS67}.
\end{proof}

\begin{lemma}[Representation formula] \label{lem:rep}
Let $f^{n,t}$ be the solution to \eqref{eq:energy-estimate}. For all $t>0$ one has
\begin{equation}\label{eq:energy2-n-eps}
\Exp\left[\int_D \abs{\nabla f^{n,t}}^2 \d\meas \right] =
2\int_t^\infty\left(\int_D p_{2s}(x,x) \d\meas(x) - 1\right)\d s
= - \sum_{\lambda\in\sigma(\Delta)\setminus\{0\}} \frac{e^{2t\lambda}}{\lambda}. 
\end{equation}
\end{lemma}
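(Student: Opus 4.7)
The plan is to diagonalise in the $L^2(\meas)$-orthonormal eigenbasis $\{u_\lambda\}_{\lambda\in\sigma(\Delta)}$ of $\Delta$ (with $u_0\equiv 1$, since $\meas(D)=1$) and compute both quantities via Parseval. First I would note that, for $t>0$, ultracontractivity (UC) makes $r^{n,t}$ bounded; moreover $\int_D r^{n,t}\d\meas=0$ since $r^n$ has zero total mass and $P_t^*$ is mass preserving. Hence $r^{n,t}=\sum_{\lambda\neq 0}c_\lambda^{n,t}u_\lambda$ converges in $L^2(\meas)$, and the unique zero-mean Neumann solution of \eqref{eq:energy-estimate} is $f^{n,t}=\sum_{\lambda\neq 0}\frac{c_\lambda^{n,t}}{\lambda}u_\lambda$. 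Integration by parts (with vanishing boundary term by the Neumann condition) gives
\[
\int_D\abs{\nabla f^{n,t}}^2\d\meas=-\int_D f^{n,t}\,\Delta f^{n,t}\d\meas=-\sum_{\lambda\in\sigma(\Delta)\setminus\{0\}}\frac{(c_\lambda^{n,t})^2}{\lambda}.
\]

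Next I would evaluate the Fourier coefficients by duality, using $P_t u_\lambda=e^{t\lambda}u_\lambda$ and $r^{n,t}\meas=P_t^*r^n$:
\[
c_\lambda^{n,t}=\int_D u_\lambda\,r^{n,t}\d\meas=\int_D P_t u_\lambda\d r^n=e^{t\lambda}\int_D u_\lambda\d r^n.
\]
Applying the first identity in \autoref{lem:covariance} to $f=u_\lambda$, and using $\int_D u_\lambda\d\meas=0$ for $\lambda\neq 0$ (orthogonality to $u_0=1$), we get $\Exp[(\int_D u_\lambda\d r^n)^2]=\moment{u_\lambda}_2^2=\norm{u_\lambda}_{L^2(\meas)}^2=1$, hence $\Exp[(c_\lambda^{n,t})^2]=e^{2t\lambda}$.

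Since $\sigma(\Delta)\setminus\{0\}\subset(-\infty,0)$, every term $-(c_\lambda^{n,t})^2/\lambda$ is nonnegative, so Tonelli allows me to exchange sum with expectation and conclude
\[
\Exp\left[\int_D\abs{\nabla f^{n,t}}^2\d\meas\right]=-\sum_{\lambda\in\sigma(\Delta)\setminus\{0\}}\frac{e^{2t\lambda}}{\lambda}.
\]
For the integral form, I would apply the elementary identity $-e^{2t\lambda}/\lambda=2\int_t^\infty e^{2s\lambda}\d s$ (valid for $\lambda<0$), swap sum and integral again by Tonelli, and finally invoke the trace formula \eqref{eq:trace-formula} to rewrite $\sum_{\lambda\neq 0}e^{2s\lambda}=\int_D p_{2s}(x,x)\d\meas(x)-1$, which yields the first equality in the statement.

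The only real subtlety is justifying the termwise manipulations of the spectral series: the ultracontractive bound together with the trace formula gives $\sum_{\lambda\in\sigma(\Delta)}e^{2t\lambda}=\int_D p_{2t}(x,x)\d\meas\leq 1+\Cuc(2t)^{-d/2}<\infty$, which in turn (via (SG), since $|\lambda|\geq\Csg$ off $0$) yields $\sum_{\lambda\neq 0}|\lambda|^{-1}e^{2t\lambda}<\infty$ and $\sum_{\lambda\neq 0}\lambda^{-2}e^{2t\lambda}<\infty$; this legitimises the $H^1$-expansion of $f^{n,t}$, the Parseval computation of $\int\abs{\nabla f^{n,t}}^2\d\meas$, the Fubini interchanges above, and the integral against $s\in(t,\infty)$.
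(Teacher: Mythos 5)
Your proof is correct, but it follows a genuinely different route from the paper's. The paper never expands anything in the eigenbasis until the very last step: it uses the representation $f^{n,t}=-\int_0^\infty P_s r^{n,t}\,\d s$ and the semigroup property to write $\int_D\abs{\nabla f^{n,t}}^2\d\meas=2\int_t^\infty\int_D(P_s^*r^n)^2\d\meas\,\d s$, then applies \autoref{lem:covariance} with $f=p_s(\plchldr,y)$ to get $\Exp[\int_D(P_s^*r^n)^2\d\meas]=\int_D\int_D(p_s(x,y)-1)^2\d\meas\,\d\meas=\int_Dp_{2s}(x,x)\d\meas-1$ via the kernel identity $\int_D\int_D p_s(x,y)^2\d\meas(x)\d\meas(y)=\int_Dp_{2s}(x,x)\d\meas(x)$, and only invokes the trace formula at the end to pass to the spectral sum. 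You instead diagonalise from the start, apply \autoref{lem:covariance} to each eigenfunction $u_\lambda$ to get $\Exp[(c_\lambda^{n,t})^2]=e^{2t\lambda}$, obtain the spectral form first, and recover the integral form last. The paper's kernel-level argument has the advantage of needing nothing about individual eigenfunctions, whereas your argument requires $u_\lambda\in C_b(D)$ to invoke \autoref{lem:covariance} (true here: boundedness follows from $u_\lambda=e^{-\lambda}P_1u_\lambda$ and ultracontractivity, continuity from the Feller property, but it is an extra check) together with the $L^2$-convergence of the expansions of $r^{n,t}$ and $f^{n,t}$, which you do address; conversely, your computation makes the origin of each factor $e^{2t\lambda}/\lambda$ completely transparent. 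One tiny refinement: for the pathwise Parseval step it is cleaner to note that $\sum_\lambda(c_\lambda^{n,t})^2=\norm{r^{n,t}}_2^2<\infty$ almost surely by Bessel (since $r^{n,t}\in L^\infty$ by (UC)), rather than deducing a.s.\ finiteness only from the finiteness of the expected sum.
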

\begin{proof} Using the representation formula $g=-\int_0^\infty P_s\Delta g\d s$ with $g=f^{n,t}$ 
we get $f^{n,t}=-\int_0^\infty P_sr^{n,t} \d s$, so that
\begin{equation}\label{eq:energy-1d-representation}
\begin{split}
\int_D \abs{\nabla f^{n,t}}^2 \d\meas &=
- \int_D f^{n,\eps}\Delta f^{n,t} \d\meas
= \int_D \left(\int_0^\infty P_s r^{n,t} \d s\right) r^{n,t} \d\meas \\
&= \int_0^\infty \int_D P_sr^{n,t} r^{n,t} \d\meas \d s
= \int_0^\infty \int_D P_{s/2}r^{n,t} P_{s/2}r^{n,t} \d\meas \d s \\
&= 2 \int_t^\infty \int_D (P^*_sr^n)^2 \d\meas \d s.
\end{split}
\end{equation}
Now, notice that the symmetry and semigroup properties of the transition probabilities give
\[\begin{split}
\int_D \int_D p_s(x,y)^2 \d\meas(x) \d\meas(y)
&= \int_D\int_D p_s(x,y)p_s(y,x)\d\meas(y) \d\meas(x) \\
&= \int_D p_{2s}(x,x)\d \meas(x).
\end{split}\]
Hence, by \autoref{lem:covariance} with $f=p_s(\plchldr,y)$ we can compute
\[\begin{split}
\Exp\left[ \int_D (P^*_sr^n)^2 \d\meas \right] &=
\int_D \Exp\left[ \bigl(P^*_sr^n(y)\bigr)^2 \right] \d\meas(y)
= \int_D \Exp\left[
	\left(\int_D p_s(x,y)\d r^n(x)\right)^2 \right] \d\meas(y) \\
&= \int_D \moment{p_s(\plchldr,y)}_2^2 \d\meas(y)
= \int_D \int_D
	\bigl(p_s(x,y)-1\bigr)^2 \d\meas(x) \d\meas(y) \\
&= \int_D \int_D p_s(x,y)^2 \d\meas(x) \d\meas(y) - 1
= \int_D p_{2s}(x,x) \d\meas(x) - 1.
\end{split}\]
By the trace formula \eqref{eq:trace-formula}, \eqref{eq:energy2-n-eps} follows.
\end{proof}

The following lemma basically applies only to $1$-dimensional domains, in view of the ultracontractivity assumption
with $d=1$.

\begin{lemma}[Energy estimate and convergence, $d=1$]\label{lem:energy-estimate}
Let $f^{n,t}$ be the solution to \eqref{eq:energy-estimate}. If $t=t(n)\to0$ as $n\to\infty$, then
\begin{equation}\label{eq:energy-1d-power2}
\lim_{n\to\infty} \Exp\left[\int_D \abs{\nabla f^{n,t}}^2 \d\meas \right] =
\int_0^\infty \left( \int_D p_s(x,x) \d\meas(x) - 1 \right) \d s =
- \sum_{\lambda\in\sigma(\Delta)\setminus\{0\}} \frac1\lambda.
\end{equation}
If ultracontractivity holds with $d=1$ we have also
\begin{equation}\label{eq:energy-1d-power4}
\limsup_{n\to\infty} \Exp\left[\left(
	\int_D \abs{\nabla f^{n,t}}^2 \d\meas \right)^2 \right] < \infty
\end{equation}
and, in particular, the limit in \eqref{eq:energy-1d-power2} is finite.
\end{lemma}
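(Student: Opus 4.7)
The plan starts from the representation formula of Lemma~\ref{lem:rep}, which already provides $\Exp\bigl[\int_D\abs{\nabla f^{n,t}}^2\d\meas\bigr]=2\int_t^\infty\bigl(\int_D p_{2s}(x,x)\d\meas-1\bigr)\d s$. After the change of variables $s\mapsto s/2$ the integrand becomes $\int_D p_s(x,x)\d\meas-1=\sum_{\lambda\in\sigma(\Delta)\setminus\{0\}} e^{s\lambda}\ge 0$. Monotone convergence then yields the first equality in \eqref{eq:energy-1d-power2} as $t\to 0$, and integrating the spectral expansion term by term (legitimate by non-negativity and $\lambda<0$) gives the second equality $-\sum_{\lambda\ne 0}\lambda^{-1}$. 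The finiteness of the limit under the $d=1$ ultracontractivity assumption will come as a byproduct of the bounds used below for the fourth-moment estimate.

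For \eqref{eq:energy-1d-power4}, I would start from the intermediate identity
\[
\int_D\abs{\nabla f^{n,t}}^2\d\meas = 2\int_t^\infty A_s\, \d s,\qquad A_s := \int_D (r^{n,s})^2\d\meas,
\]
appearing in the proof of Lemma~\ref{lem:rep}, square it, and take expectation, thereby reducing matters to estimating $\Exp[A_{s_1}A_{s_2}]$ for $(s_1,s_2)\in(t,\infty)^2$. Writing $r^{n,s}(y)=\int p_s(\cdot,y)\d r^n$ and applying the fourth-moment inequality of Lemma~\ref{lem:covariance} pointwise with $f=p_{s_1}(\cdot,y)$ and $g=p_{s_2}(\cdot,z)$, then integrating in $\d\meas(y)\,\d\meas(z)$, yields, with $\Psi(s):=\int_D p_s(x,x)\d\meas-1$, a bound of the form
\[
\Exp[A_{s_1}A_{s_2}] \le 3\,\Psi(2s_1)\Psi(2s_2) + \frac{\Cuc^2}{n}(s_1s_2)^{-1/2}\Psi(2s_1)^{1/2}\Psi(2s_2)^{1/2}.
\]
The $\moment{\cdot}_2^2$ factors integrate to $\Psi(2s)$ by the computation already performed in Lemma~\ref{lem:rep}; the $\moment{\cdot}_4^2$ factors are controlled by combining the elementary inequality $\moment{f}_4^2\le\moment{f}_2\moment{f}_\infty$ (a consequence of $\moment{f}_{k+2}^{k+2}\le\moment{f}_2^2\moment{f}_\infty^k$), the one-dimensional ultracontractivity $\moment{p_s(\cdot,y)}_\infty\le\Cuc s^{-1/2}$, and Cauchy--Schwarz in $y$ (using $\meas(D)=1$), altogether giving $\int_D\moment{p_s(\cdot,y)}_4^2\d\meas(y)\le\Cuc s^{-1/2}\Psi(2s)^{1/2}$.

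Integrating the displayed bound in $s_1,s_2\ge t$ and letting $t\to 0$, the first contribution is $12\bigl(\int_0^\infty\Psi(2s)\d s\bigr)^2$, a finite quantity proportional to the square of the limit in \eqref{eq:energy-1d-power2}, while the second is $O(1/n)$ and hence harmless for the $\limsup$. The main (mild) obstacle is the global integrability of the two kernels $\Psi(2s)$ and $s^{-1/2}\Psi(2s)^{1/2}$: near $s=0$ one-dimensional ultracontractivity gives $\Psi(s)\le\Cuc s^{-1/2}$, so $\Psi(2s)$ is integrable at the origin and $s^{-1/2}\Psi(2s)^{1/2}\lesssim s^{-3/4}$ is as well; at infinity the spectral gap (SG) forces $\lambda\le-\Csg$ for every non-zero eigenvalue, and from $\Psi(2s)\le e^{-\Csg s}\Psi(s)\le\Cuc e^{-\Csg s}s^{-1/2}$ one reads the exponential decay of both integrands. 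Once these integrability checks are in place, both parts of the lemma follow.
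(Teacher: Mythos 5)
Your proposal is correct and follows essentially the same route as the paper: the representation formula of \autoref{lem:rep}, squaring the intermediate identity, the fourth-moment bound of \autoref{lem:covariance} applied to $f=p_{s_1}(\cdot,y)$, $g=p_{s_2}(\cdot,z)$, and then ultracontractivity near $s=0$ together with the spectral gap at infinity to check integrability. The only (harmless) difference is bookkeeping: you control the $\moment{\cdot}_4^2$ factors uniformly via the interpolation $\moment{f}_4^2\le\moment{f}_2\moment{f}_\infty$ plus Cauchy--Schwarz in $y$, whereas the paper splits the time integral into $(t,1)$ and $(1,\infty)$ and estimates each piece separately; both yield the same uniform bound.
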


\begin{proof} The identities \eqref{eq:energy-1d-power2} follow by \eqref{eq:energy2-n-eps} by taking the limit as $n\to\infty$.
If ultracontractivity holds with $d=1$, we show that the $\limsup$ in \eqref{eq:energy-1d-power2} is finite by splitting the integration in $(t,1)$ and $(1,\infty)$ in the identity
\begin{equation}\label{eq:ambr4}
\Exp\left[\int_D \abs{\nabla f^{n,t}}^2 \d\meas \right] =
	2 \int_t^\infty \int_D \int_D \bigl(p_s(x,y)-1\bigr)^2 \d\meas(x) \d\meas(y) \d s,
\end{equation}
which is a by-product of the intermediate computations made in the proof of \autoref{lem:rep}. For $s\in(t,1)$ we estimate
\[\begin{split}
\int_D \int_D \bigl(p_s(x,y)-1\bigr)^2 \d\meas(x) \d\meas(y)
&\leq \Cuc s^{-1/2} \int_D \int_D \abs{p_s(x,y)-1} \d\meas(x) \d\meas(y) \\
&\leq 2\Cuc s^{-1/2}.
\end{split}\]
For $s\in(1,\infty)$ instead
\[\begin{split}
\int_D \int_D \bigl(p_s(x,y)-1\bigr)^2 &\d\meas(x) \d\meas(y) =
\int_D \norm{P_{s-1}P_1^*(\delta_y-\meas)}_2^2 \d\meas(y)  \\
&\leq e^{-2\Csg(s-1)} \int_D \norm{P_1^*(\delta_y-\meas)}_2^2 \d\meas(y) \leq
	4\norm{P_1^*}_{{\cal M}\to L^2}^2 e^{-2\Csg(s-1)}.
\end{split}\]
In conclusion, for some geometric constant $C$, one has
\[
\Exp\left[\int_D \abs{\nabla f^{n,t}}^2 \d\meas \right] \leq
C\left( \int_t^1 s^{-1/2} \d s + \int_1^\infty e^{-2\Csg s} \d s \right),
\]
from which the finiteness of \eqref{eq:energy-1d-power2} readily follows.

To show \eqref{eq:energy-1d-power4}, we start from \eqref{eq:energy-1d-representation} 
and estimate with the aid of \autoref{lem:covariance}
\[\begin{split}
\Exp\Biggr[\biggl( \int_D & \abs{\nabla f^{n,t}}^2 \d\meas \biggr)^2 \Biggr] =
\Exp\left[\left( 2 \int_t^\infty \int_D (P_s^*r^n)^2 \d\meas \d s \right)^2\right] \\
&= 4 \int_t^\infty \int_t^\infty \int_D \int_D
	\Exp\left[ \bigl(P^*_sr^n(y)\bigr)^2\bigl(P^*_{s'}r^n(z)\bigr)^2 \right]
	\d\meas(y)\d\meas(z)\d s\d s'  \\
&\begin{split} {}\leq 4 \int_t^\infty \int_t^\infty \int_D \int_D &
	\biggl( 3 \moment{p_s(\plchldr,y)}_2^2\moment{p_{s'}(\plchldr,z)}_2^2  \\
	&+ \frac1n\moment{p_s(\plchldr,y)}_4^2\moment{p_{s'}(\plchldr,z)}_4^2
	\biggr) \d\meas(y)\d\meas(z)\d s\d s' \end{split} \\
&= 3 \left( 2 \int_t^\infty \int_D
	\moment{p_s(\plchldr,y)}_2^2 \d\meas(y)\d s\right)^2
	+ \frac1n \left( 2 \int_t^\infty \int_D
	\moment{p_s(\plchldr,y)}_4^2 \d\meas(y)\d s\right)^2 \\
&= 3 \left(\sum_{\lambda\in\sigma(\Delta)\setminus\{0\}}
	\frac{e^{2t\lambda}}{\lambda}\right)^2
	+ \frac1n \left( 2 \int_t^\infty \int_D
	\moment{p_s(\plchldr,y)}_4^2 \d\meas(y)\d s\right)^2.
\end{split}\]
In order to show that the $\limsup$ of last integral is finite we split the integration in $(t,1)$ and $(1,\infty)$. For $s\in(t,1)$ we use
\[
\moment{p_s(\plchldr,y)}_4^4 \leq \int_D
	\bigl(p_s(x,y)-1\bigr)^4\d\meas(x) \leq \Cuc^3 s^{-3/2} \int_D \abs{p_s(x,y)-1} \d\meas(x) \leq
2 \Cuc^3 s^{-3/2}.
\]
For $s\in(1,\infty)$ instead
\[
\moment{p_s(\plchldr,y)}_4^4 \leq \Cuc^2 s^{-1} \moment{p_s(\plchldr,y)}_2^2 \leq
\Cuc^2 s^{-1} \norm{P_s^*(\delta_y-\meas)}_2^2 \leq
\Cuc^2 e^{-2\Csg(s-1)} \norm{P_1^*(\delta_y-\meas)}_2^2.
\]
Putting these estimates together,
\[
\int_t^\infty \moment{p_s(\plchldr,y)}_4^2 \d s \leq
\sqrt2 \Cuc^{3/2} \int_t^1 s^{-3/4} \d s +
	\Cuc \norm{P^*_1(\delta_y-\meas)}_2 \int_1^\infty e^{-\Csg(s-1)} \d s,
\]
which is bounded, uniformly in $y$ and $t$, because $\norm{P^*_1(\delta_y-\meas)}_2 \leq 2 \norm{P^*_1}_{{\cal M}\to L^2}$.
\end{proof}

\begin{lemma}[Renormalized energy estimate and convergence, $d=2$]\label{lem:renormalized-energy-estimate}
Assume that ultracontractivity holds with $d=2$. Let $f^{n,t}$ be the solution to \eqref{eq:energy-estimate}.
If $t=t(n)\to0$ as $n\to\infty$ and $t\geq C/n$ for some $C>0$, then
\begin{equation}\label{eq:energy-2d-power4}
\limsup_{n\to\infty}
	\frac1{(\log t)^2} \Exp\left[\int_D \abs{\nabla f^{n,t}}^4 \d\meas \right] < \infty.
\end{equation}
In particular
\begin{equation}\label{eq:energy-2d-power2-limsup}
\limsup_{n\to\infty}
	\frac1{\abs{\log t}}\Exp\left[\int_D \abs{\nabla f^{n,t}}^2 \d\meas \right]  < \infty.
\end{equation}
Moreover, under the assumptions on $D$ of \autoref{prop:expansion}, one has
\begin{equation}\label{eq:energy-2d-convergence}
\lim_{n\to\infty}
	\frac{1}{\abs{\log t}}\Exp\left[\int_D \abs{\nabla f^{n,t}}^2 \d\meas \right]  =\frac{1}{4\pi}.
\end{equation}
\end{lemma}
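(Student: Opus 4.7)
My plan is to prove the three statements in reverse order, since (3) immediately implies (2), and then (1) requires a separate, more delicate argument based on the Riesz transform assumption (RT).

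For \eqref{eq:energy-2d-convergence} and \eqref{eq:energy-2d-power2-limsup}, I would start from the exact representation formula provided by \autoref{lem:rep}, rewritten (after the change of variables $u=2s$) as
\[
\Exp\left[\int_D\abs{\nabla f^{n,t}}^2 \d\meas\right] = \int_{2t}^\infty \left(\int_D p_u(x,x)\d\meas(x) - 1\right)\d u.
\]
I would split this integral at $u=1$. On $(1,\infty)$, the spectral gap assumption (SG) and the trace formula \eqref{eq:trace-formula} yield an exponentially decaying tail, which contributes $O(1)$. On $(2t,1)$, the expansion from \autoref{prop:expansion} (in dimension $d=2$) gives $\int p_u(x,x)\d\meas = (4\pi u)^{-1}(1+O(\sqrt u))$, so that
\[
\int_{2t}^1 \frac{\d u}{4\pi u} = \frac{1}{4\pi}\abs{\log(2t)} = \frac{1}{4\pi}\abs{\log t} + O(1),
\]
while the error term contributes $\int_{2t}^1 O(u^{-1/2})\d u = O(1)$ and the $-1$ contributes $O(1)$. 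Dividing by $\abs{\log t}$ and letting $n\to\infty$ gives \eqref{eq:energy-2d-convergence}; the weaker bound \eqref{eq:energy-2d-power2-limsup} follows, and also holds without the expansion since (UC) alone provides $\abs{p_u(x,x)-1}\le\Cuc u^{-1}$ on $(2t,1)$.

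For the $L^4$ bound \eqref{eq:energy-2d-power4}, I would first invoke (RT) to reduce to an estimate on $\Exp\bigl[\int_D\abs{(-\Delta)^{1/2}f^{n,t}}^4\d\meas\bigr]$. Using the subordination identity $(-\Delta)^{-1/2}=\pi^{-1/2}\int_0^\infty s^{-1/2}P_s\d s$ and $\Delta f^{n,t}=r^{n,t}$, one writes pointwise
\[
(-\Delta)^{1/2} f^{n,t}(y) = \int_D k(x,y)\d r^n(x), \qquad k(x,y) = -\frac{1}{\sqrt\pi}\int_0^\infty s^{-1/2}\bigl(p_{s+t}(x,y)-1\bigr)\d s,
\]
where the integral defining $k$ converges (small $s$: use (UC); large $s$: use (SG)). Since for each fixed $y$ this is $n^{-1/2}$ times a sum of i.i.d.\ bounded mean-zero variables $k(X_i,y)-\int k(\cdot,y)\d\meas$, a direct expansion of the fourth moment yields
\[
\Exp\bigl[((-\Delta)^{1/2}f^{n,t}(y))^4\bigr] \le 3\,\moment{k(\plchldr,y)}_2^4 + \frac{1}{n}\moment{k(\plchldr,y)}_4^4.
\]

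The main calculation, and the one I expect to be delicate, is to bound the two norms of $k(\plchldr,y)$. For the $\moment{\plchldr}_2$ part, Fubini together with the semigroup identity $\int p_a(x,y)p_b(x,y)\d\meas(x)=p_{a+b}(y,y)$ and the Beta-integral $\int_0^u s^{-1/2}(u-s)^{-1/2}\d s=\pi$ collapse the double integral to
\[
\moment{k(\plchldr,y)}_2^2 = \int_{2t}^\infty\bigl(p_u(y,y)-1\bigr)\d u,
\]
which by (UC) and (SG) is bounded uniformly in $y$ by $C\abs{\log t}$; moreover its integral over $y$ reproduces $\Exp[\int\abs{\nabla f^{n,t}}^2\d\meas]$, as a sanity check. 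For the $\moment{\plchldr}_4^4$ part, I would use the crude bound $\moment{k(\plchldr,y)}_4^4\le\norm{k(\plchldr,y)}_\infty^2\moment{k(\plchldr,y)}_2^2$, together with the estimate $\norm{k(\plchldr,y)}_\infty\le Ct^{-1/2}$ (obtained by splitting the defining integral at $s=1$ and using (UC) on $(0,1)$, after comparing with $\int_0^t s^{-1/2}t^{-1}\d s+\int_t^1 s^{-3/2}\d s$, and (SG) on $(1,\infty)$). Putting these together,
\[
\Exp\left[\int_D\abs{(-\Delta)^{1/2}f^{n,t}}^4\d\meas\right] \le 3\abs{\log t}\cdot\int_D\moment{k(\plchldr,y)}_2^2\d\meas(y) + \frac{C\abs{\log t}}{nt},
\]
and the hypothesis $t\ge C/n$ together with $\abs{\log t}\le(\log t)^2$ for small $t$ yields the desired $(\log t)^2$ bound.
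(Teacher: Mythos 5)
Your proof is correct. For \eqref{eq:energy-2d-power2-limsup} and \eqref{eq:energy-2d-convergence} you follow essentially the paper's own argument: the representation formula of \autoref{lem:rep}, the splitting at $s=1$ with (UC) on the short-time piece and (SG) on the tail, and \autoref{prop:expansion} for the exact constant. For the $L^4$ bound \eqref{eq:energy-2d-power4}, however, you take a genuinely different and arguably more elementary route. The paper controls $\norm{(-\Delta)^{1/2}f^{n,t}}_4$ via Stein's Littlewood--Paley inequality $\norm{g}_p^p\leq c_p\norm{S(g)}_p^p$ together with the uniform $L^p\to L^p$ bound on $(-\tau\Delta)^{1/2}P_{\tau/2}$ coming from analyticity of the semigroup and complex interpolation; the resulting kernels $K^t_s(\plchldr,y)$ are then estimated through $\norm{P^*_s(\delta_y-\meas)}_p$ for $p=2,4$. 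You instead invoke the subordination identity $(-\Delta)^{-1/2}=\pi^{-1/2}\int_0^\infty s^{-1/2}P_s\d s$ to produce a single explicit kernel $k(x,y)$ with $(-\Delta)^{1/2}f^{n,t}(y)=\int_D k(x,y)\d r^n(x)$, and then apply the same fourth-moment expansion (\autoref{lem:covariance}). The payoff of your route is the exact collapse
\[
\moment{k(\plchldr,y)}_2^2=\int_{2t}^\infty\bigl(p_u(y,y)-1\bigr)\d u,
\]
via the semigroup property and the Beta integral $\int_0^v s^{-1/2}(v-s)^{-1/2}\d s=\pi$, which both gives the uniform bound $C\abs{\log t}$ and ties the $L^4$ estimate directly back to the $L^2$ energy; the crude bounds $\moment{k}_4^4\leq\moment{k}_\infty^2\moment{k}_2^2$ and $\norm{k(\plchldr,y)}_\infty\leq Ct^{-1/2}$ then yield the error term $\abs{\log t}/(nt)$, which is exactly where the hypothesis $t\geq C/n$ is used (the same role it plays in the paper). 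What you lose is generality of the method (the paper's square-function argument is the standard template for higher even powers $p>4$, alluded to in \autoref{sec:more_general_spaces}); what you gain is that no Littlewood--Paley theory and no analytic-semigroup interpolation are needed, only (UC), (SG), (RT) and the subordination formula.
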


\begin{proof} We will prove first \eqref{eq:energy-2d-power2-limsup} as an intermediate step in the proof of
\eqref{eq:energy-2d-power4}, starting from the representation formula \eqref{eq:ambr4}.
For $s\in(t,1)$ we estimate
\[
\int_D \int_D \bigl(p_s(x,y)-1\bigr)^2 \d\meas(x) \d\meas(y) \leq
\Cuc s^{-1} \int_D \int_D \abs{p_s(x,y)-1} \d\meas(x) \d\meas(y) \leq 2\Cuc s^{-1}.
\]
For $s\in(1,\infty)$ instead
\[\begin{split}
\int_D \int_D \bigl(p_s(x,y)-1\bigr)^2 &\d\meas(x) \d\meas(y) =
\int_D \norm{P_{s-1}P_1(\delta_y-\meas)}_2^2 \d\meas(y) \\
&\leq e^{-2\Csg(s-1)} \int_D \norm{P_1(\delta_y-\meas)}_2^2 \d\meas(y) \leq
	4\norm{P_1}_{L^1\to L^2}^2 e^{-2\Csg(s-1)}.
\end{split}\]
In conclusion, for some geometric constant $C$, one has
\[
\Exp\left[\int_D \abs{\nabla f^{n,t}}^2 \d\meas \right] \leq
C\left( \int_t^1 s^{-1} \d s + \int_1^\infty e^{-2\Csg s} \d s \right) \leq
C (\abs{\log t} + 1),
\]
from which \eqref{eq:energy-2d-power2-limsup} readily follows.

In order to prove \eqref{eq:energy-2d-convergence}, we notice that the estimates given in the proof of
\eqref{eq:energy-2d-power2-limsup} show that
\[
\frac{1}{\abs{\log t}}\Exp\left[\int_D \abs{\nabla f^{n,t}}^2 \d\meas \right] 
-\frac{2}{\abs{\log t}}
\int_t^1\left(\int_D p_{2s}(x,x) \d\meas(x) - 1\right)\d s
\]
is infinitesimal as $n\to\infty$. Combining this information with \eqref{eq:expansion} of
\autoref{prop:expansion}, we obtain \eqref{eq:energy-2d-convergence}.

To deal with \eqref{eq:energy-2d-power4}, we introduce the Paley-Littlewood function
\[
S(g) = \left( \int_0^\infty \left(s \partial_s P_sg \right)^2 \frac{\d s}{s} \right)^{1/2}.
\]
Using the Riesz transform bound and the fundamental theorem \cite{S70} 
$\norm{g}_p^p \leq c_p \norm{S(g)}_p^p$ for any $p\in(1,\infty)$ and $g$ with $\int_D g\d\meas=0$, we obtain
\[\begin{split}
\int_D \abs{\nabla f^{n,t}}^4 &\d\meas \leq
	C_r \int_D \abs*{(-\Delta)^{1/2}f^{n,t}}^4 \d\meas \leq
C_r c_4 \int_D S\left((-\Delta)^{1/2}f^{n,t}\right)^4 \d\meas \\
&= C_r c_4 \int_0^\infty \int_0^\infty \int_D
	s \left(\partial_s P_s (-\Delta)^{1/2}f^{n,t}\right)^2
	s' \left(\partial_{s'} P_{s'} (-\Delta)^{1/2}f^{n,t}\right)^2
	\d\meas \d s \d s'.
\end{split}\]
Using the fact that $\partial_tP_t = \Delta P_t$ and that the operators $\Delta$, $P_t$ and $(-\Delta)^{1/2}$ commute we have
\[
\partial_\tau P_\tau (-\Delta)^{1/2}f^{n,t} = (-\Delta)^{1/2}P^*_{\tau+t}r^n,
\]
so that
\[
\int_D \abs{\nabla f^{n,t}}^4 \d\meas \leq
C_r c_4 \int_0^\infty \int_0^\infty \int_D
	\left( (-s\Delta)^{1/2} P^*_{s+t} r^n \right)^2
	\left( (-s'\Delta)^{1/2} P^*_{s'+t} r^n \right)^2 \d\meas \d s \d s'.
\]
For $y\in D$ fixed, consider the operators
\[
(T_s^t \mu)(y) = \left( (-s\Delta)^{1/2} P^*_{s+t} \mu \right)(y) =
\int_D K_s^t(x,y) \d\mu(x)
\]
and notice that
\[
 \int_D K_s^t(x,y) \d\meas(x)= T_s^t\meas(y)= 0.
\]
In addition, since $T_s^t: L^2(\meas) \to L^2(\meas)$ is self-adjoint, the kernel $K_s^t$ is symmetric and
\begin{equation}\label{eq:ambr10}
T^t_s\delta_x(y)=K^t_s(x,y)=K^t_s(y,x).
\end{equation}
Taking the expectation of the integrand,
\[\begin{split}
\Exp\biggl[
	\left( (-s\Delta)^{1/2} P^*_{s+t} r^n \right)^2&(y)
	\left( (-s'\Delta)^{1/2} P^*_{s'+t} r^n \right)^2(y) \biggr] =
\Exp\left[ (T_s^t r^n)^2(y) (T_{s'}^t r^n)^2(y) \right] \\
&= \Exp\left[
	\left(\int_D K_s^t(x,y) \d r^n(x) \right)^2
	\left(\int_D K_{s'}^t(x',y) \d r^n(x') \right)^2 \right] \\
&\leq 3\frac{n-1}{n} \moment{K_s^t(\plchldr,y)}_2^2 \moment{K_{s'}^t(\plchldr,y)}_2^2
	+ \frac1n \moment{K_s^t(\plchldr,y)}_4^2 \moment{K_{s'}^t(\plchldr,y)}_4^2.
\end{split}\]
Integrating in $s$ and $s'$ we obtain
\begin{multline*}
\int_0^\infty \int_0^\infty \Exp\biggl[
	\left( (-s\Delta)^{1/2} P^*_{s+t} r^n \right)^2(y)
 \left( (-s'\Delta)^{1/2} P^*_{s'+t} r^n \right)^2(y) \biggr] \d s \d s' \\
\leq 3\frac{n-1}{n} \left( \int_0^\infty \moment{K_s^t(\plchldr,y)}_2^2 \d s \right)^2
	+ \frac1n \left( \int_0^\infty \moment{K_{s'}^t(\plchldr,y)}_4^2 \d s' \right)^2.
\end{multline*}
Since $(P_t)_{t\ge 0}$ is a bounded analytic semigroup, complex interpolation yields  that, for $p \in (1, \infty)$, $(-\tau\Delta)^{1/2}P_{\tau/2}:L^p\to L^p$ is continuous with norms uniformly bounded for $\tau\geq0$ \cite[Sections X.10-11]{Y80}, hence
we have the estimate
\[\begin{split}
\moment{K_s^t(\plchldr,y)}_p &=
\norm{T_s^t \delta_y}_p = \norm{T_s^t (\delta_y-\meas)}_p \\
&= \norm*{ (-s\Delta)^{1/2}P_{s/2} P^*_{s/2+t} (\delta_y-\meas) }_p \\
&\leq C_p \norm{P^*_{s/2+t} (\delta_y-\meas)}_p,
\end{split}\]
where in the first equality we used \eqref{eq:ambr10}. We consider 
\[
\int_0^\infty \moment{K_s^t(\plchldr,y)}_p^2 \d s\leq
C_p \int_0^\infty \norm{P^*_{s/2+t} (\delta_y-\meas)}_p^2 \d s =
2C_p \int_t^\infty \norm{P^*_s (\delta_y-\meas)}_p^2 \d s.
\]
Now we split the integrals for $s\in(t,2)$ and $s\in(2,\infty)$.
In the former interval we use the estimate
\[\begin{split}
\norm{P^*_s(\delta_y-\meas)}_p &=
\left( \int_D \abs{p_s(x,y)-1}^p \d\meas(x) \right)^{1/p} \\
&\leq \left( \int_D \bigl(\Cuc s^{-1}\bigr)^{p-1} \abs{p_s(x,y)-1} \d\meas (x) \right)^{1/p} \\
&\leq 2^{1/p} \Cuc^{(p-1)/p} s^{-(p-1)/p}.
\end{split}\]
In the latter interval we use the estimate
\[\begin{split}
\norm{P_s^*(\delta_y-\meas)}_p &= \norm{P_1P_{(t-2)}P^*_1(\delta_y-\meas)}_p \\
&\leq \norm{P_1}_{L^2\to L^p} e^{-\Csg(s-2)} \norm{P^*_1(\delta_y-\meas)}_2  \\
&\leq \norm{P_1}_{L^2\to L^p} \norm{P^*_1}_{{\cal M}\to L^2}
	e^{-\Csg(s-2)} \norm{\delta_y-\meas}_{{\cal M}} \\
&\leq 2e^2 \norm{P_1}_{L^2\to L^p} \norm{P^*_1}_{{\cal M}\to L^2} e^{-\Csg t}.
\end{split}\]
Putting these estimates together, in the case $p=2$ we have,
\[
\int_t^\infty \norm{P^*_s (\delta_y-\meas)}_p^2 \d s \leq
C \left( \int_t^2 s^{-1} \d s + \int_2^\infty e^{-2\Csg s} \d s \right) \leq
C (\abs{\log t} + 1)
\]
for some geometric constant $C$. In the case $p=4$ we have also
\[
\int_t^\infty \norm{P^*_s (\delta_y-\meas)}_p^2 \d s \leq
C \left( \int_t^2 s^{-3/2} \d s + \int_2^\infty e^{-2\Csg s} \d s \right) \leq
C \left(\frac 1{\sqrt{t}} + 1\right).
\]
This yields
\[\begin{split}
3\frac{n-1}{n} \left( \int_0^\infty \moment{K_s^t(\plchldr,y)}_2^2 \d s \right)^2
	+ \frac1n \left( \int_0^\infty \moment{K_s^t(\plchldr,y)}_4^2 \d s \right)^2  \\
\leq C \frac{n-1}n (\log t)^2 + C\left(\frac n t + 1\right).
\end{split}\]
In conclusion
\[\begin{split}
\Exp\left[\int_D \abs{\nabla f^{n,t}}^4 \d\meas \right] \frac1{(\log t)^2} &\leq
\frac{C}{(\log t)^2}
	\int_D \left(\frac{n-1}n (\log t)^2 + \frac n t + 1\right) \d\meas(y) \\
&\leq C \left[ 1 + \frac n{(\log t)^2t} + \frac1{(\log t)^2} \right]
\end{split}\]
is uniformly bounded as $n\to\infty$ by the assumptions on $t=t(n)$.
\end{proof}

\section{Proof of the main result}\label{sec:proofmain}

In this section we prove \autoref{thm:main}. In the proof of the upper bound we need only
to assume the regularizing properties of $P_t$ listed in \autoref{sec:heat}; in particular this inequality
covers also the case $D=[0,1]^2$ and compact 2-dimensional Riemannian manifolds with smooth boundary. 
In the proof of the lower bound we need also to assume that $D$ has no boundary; by a comparison argument,
since the distance in $\T^2$ is smaller than the distance in $[0,1]^2$, we recover also the lower bound for $D=[0,1]^2$.

We include also the $1$-dimensional case (whose proofs are a bit simpler), which covers the case of the interval and the case of the circle.
For brevity we state the result only in the Riemannian case, but the strength of this method relies in the fact that it can be extended to more general $1$-dimensional spaces (see also \autoref{sec:more_general_spaces}).

\begin{theorem}\label{thm:main1}
Assume that either $D=[0,1]$ or $D=\T^1$. Then
\[
\lim_{n\to\infty} n\Exp\left[W_2^2(\mu^n,\meas)\right] =
- \sum_{\lambda\in\sigma(\Delta)\setminus\{0\}} \frac1\lambda.
\]
In particular, from Euler's formula $\pi^2=6\sum_{k\geq 1}k^{-2}$, the limit equals $1/6$ for $D=[0,1]$ and $1/12$ for $D=\T^1$. 
\end{theorem}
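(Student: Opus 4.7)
The plan is to combine the Dacorogna--Moser transport estimate of Proposition \ref{prop:dacorogna-moser} for the upper bound with the Kantorovich duality \eqref{eq:ambr5} and the Hopf--Cole dual potential from Corollary \ref{cor:good-enough-potential} for the lower bound. Both sides are driven by the random Poisson potential $f^{n,t}$ solving $\Delta f^{n,t}=r^{n,t}$, whose expected Dirichlet energy converges to $-\sum_{\lambda\in\sigma(\Delta)\setminus\{0\}}\lambda^{-1}$ by Lemma \ref{lem:energy-estimate}. I fix a regularization scale $t=t(n)\to 0$ with $nt\to 0$ but $t\ge n^{-2q}$ for some $q\in(1/2,1)$ (e.g.\ $t=n^{-3/2}$), so that the dispersion error $nW_2^2(\mu^n,\mu^{n,t})\le n\,\Cdr t$ coming from (DR) is negligible at the scale $1/n$, while the super-polynomial tail bound of Proposition \ref{prop:d1-scaling} applies.

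For the upper bound, the triangle inequality and dispersion reduce the question to $W_2^2(\mu^{n,t,c},\meas)$, where $\mu^{n,t,c}=(1-c)\mu^{n,t}+c\meas$ with $c=c(n)\to 0$ and $nc(n)\to\infty$; joint convexity \eqref{eq:joint_convexity} bounds the extra error $W_2^2(\mu^{n,t},\mu^{n,t,c})\le c\,W_2^2(\mu^{n,t},\meas)$. Since $u^{n,t,c}\ge c>0$, Proposition \ref{prop:dacorogna-moser} applied with the potential $(1-c)f^{n,t}/\sqrt n$ yields
\[ nW_2^2(\mu^{n,t,c},\meas)\le (1-c)^2\int_D\frac{\abs{\nabla f^{n,t}}^2}{M(u^{n,t,c},1)}\d\meas. \]
Writing the integrand as $\abs{\nabla f^{n,t}}^2\bigl(1+(M^{-1}-1)\bigr)$, the main term has the correct expected limit by Lemma \ref{lem:energy-estimate}. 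The correction is handled on the event $A=\{\sup_y\abs{u^{n,t}(y)-1}\le\eta\}$, on which $\abs{M^{-1}-1}\le C\eta$ uniformly, while on $A^c$ the crude bound $M^{-1}\le c^{-1/2}$, Cauchy--Schwarz in probability using \eqref{eq:energy-1d-power4}, and the tail estimate of Proposition \ref{prop:d1-scaling} make the contribution negligible. Letting $n\to\infty$ then $\eta\to 0$ closes the upper bound.

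For the lower bound, I plug the test function $f=-P_t f^{n,t}/\sqrt n$ into \eqref{eq:ambr5}. Self-adjointness of $P_t$, mass preservation, and integration by parts against $\Delta f^{n,t}=\sqrt n\,(u^{n,t}-1)$ give the exact identity
\[ \int_D f\d(\mu^n-\meas)=\frac{1}{n}\int_D\abs{\nabla f^{n,t}}^2\d\meas. \]
Corollary \ref{cor:good-enough-potential} then produces $g\in C_b(D)$ with $f+g\le\dist^2/2$ pointwise and $\int(f+g)\d\meas\ge-e^{C_n}\int\abs{\nabla f}^2/2\,\d\meas$, where $C_n=\norm{(\Delta f)^-}_\infty+\tfrac{1}{2}(e^{2K^-}-1)\norm{\nabla f}_\infty^2$. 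Using (GC) to compare $\nabla P_t f^{n,t}$ with $\nabla f^{n,t}$, this rearranges into
\[ \tfrac{1}{2}W_2^2(\mu^n,\meas)\ge\frac{1}{n}\int_D\abs{\nabla f^{n,t}}^2\d\meas-\frac{e^{C_n}e^{2K^-t}}{2n}\int_D\abs{\nabla f^{n,t}}^2\d\meas. \]
On $A$, ultracontractivity forces $\norm{r^{n,t}}_\infty\le\sqrt n\,\eta$, and together with the elliptic regularity \eqref{eq:elliptic-regularity} this gives $\norm{(\Delta f)^-}_\infty\le\eta$ and $\norm{\nabla f}_\infty=O(\eta)$, so $C_n\to 0$ as $\eta\to 0$. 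The contribution of $A^c$ to $\Exp[W_2^2]$ is rendered negligible by the crude deterministic bound $\abs{\int f\d\mu^n+\int g\d\meas}\lesssim\norm{f}_\infty=O(t^{-1/2})$ combined with the super-polynomial decay of $\Pro(A^c)$. Passing to the $\liminf$ and invoking Lemma \ref{lem:energy-estimate} matches the upper bound.

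The main difficulty lies in controlling the exponential factor $e^{C_n}$ in the lower bound: a naive ultracontractive estimate forces $\norm{(\Delta f)^-}_\infty\lesssim t^{-1/2}\to\infty$, and the only rescue is to restrict to the high-probability event where $u^{n,t}$ is uniformly close to $1$, as quantified by Proposition \ref{prop:d1-scaling}. A second caveat is that Corollary \ref{cor:good-enough-potential} is stated for manifolds without boundary; for $D=[0,1]$ one notices that $-\sigma\log P_s e^{-f/\sigma}$ automatically inherits Neumann conditions from $f$, so the integration-by-parts steps and Bochner's inequality underlying the viscous Hopf--Lax energy estimate extend verbatim to the interval.
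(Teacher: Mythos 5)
Your proposal is correct and follows essentially the same route as the paper's proof: Dacorogna--Moser (Proposition \ref{prop:dacorogna-moser}) plus the energy convergence of Lemma \ref{lem:energy-estimate} for the upper bound, and the Hopf--Cole dual potential of Corollary \ref{cor:good-enough-potential} on the high-probability event of Proposition \ref{prop:d1-scaling}, with the complementary event controlled via \eqref{eq:energy-1d-power4}, for the lower bound. The only deviations are cosmetic: the paper dispenses with the $c\meas$-regularization in $d=1$ (on $A_\eta$ one already has $u^{n,t}\ge 1-\eta>0$, and on $A_\eta^c$ the trivial bound $W_2^2\le(\diam D)^2$ suffices), and it tests duality for $W_2^2(\mu^{n,t},\meas)$ with $-f^{n,t}/\sqrt n$ after invoking the $W_2$-contractivity of $P_t^*$ rather than testing $W_2^2(\mu^n,\meas)$ directly with $-P_t f^{n,t}/\sqrt n$ --- two equivalent bookkeepings --- while your reflection remark for the Neumann boundary of $[0,1]$ makes explicit a point the paper leaves implicit.
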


\begin{remark}\label{rem:1deasy}
In the case $D=[0,1]$ and $\meas=\leb^1\res D$ we can explicitly compute $n\Exp[W_2^2(\mu^n,\meas)]$ and $n\Exp[W_2^2(\mu^n,\nu^n)]$ as follows (and in particular, the former is identically equal to $1/6$). For any fixed $n\in\setN$, let $X_{(k)}$ and $Y_{(k)}$ denote the order statistics of the random variables $(X_i)_{i=1}^n$ and $(Y_i)_{i=1}^n$. It is well known that $X_{(k)}$ and $Y_{(k)}$ are distributed according to the beta distribution
$X_{(k)} \sim Y_{(k)} \sim \Beta(k, n+1-k)$.

The optimal map is given by the monotone rearrangement of the mass, therefore
\[
\begin{split}
\Exp\left[W_2^2(\mu^n,\meas)\right]
&= \Exp\left[ \sum_{k=1}^n \int_{(k-1)/n}^{k/n} (X_{(k)}-t)^2 \d t \right]
= \sum_{k=1}^n \int_{(k-1)/n}^{k/n} \Exp[(X_{(k)}-t)^2] \d t \\
&= \sum_{k=1}^n \int_{(k-1)/n}^{k/n} \left(
	\Var(X_{(k)}) + (\Exp[X_{(k)}]-t)^2	\right) \d t \\
&= \sum_{k=1}^n \int_{(k-1)/n}^{k/n} \left[
	\frac{(k+1)k}{(n+2)(n+1)} - 2t\frac{k}{n+1} + t^2 \right] \d t\\
&= \sum_{k=1}^n \left[
	\frac{(k+1)k}{(n+2)(n+1)n} - \frac{(2k-1)k}{(n+1)n^2}
	+ \frac{3k^2-3k+1}{3n^3} \right]
= \frac{1}{6n}.
\end{split}
\]

Similarly, in the bipartite case we have
\[
\begin{split}
\Exp\left[W_2^2(\mu^n,\nu^n)\right] &=
\Exp\left[ \frac1n \sum_{k=1}^n (X_{(k)}-Y_{(k)})^2 \right]
= \frac1n \sum_{k=1}^n \left( \Exp[X_{(k)}^2] - \Exp[X_{(k)}^2]\right) \\
&= \frac2n \sum_{k=1}^n \Var(X_{(k)})
= \frac2n \sum_{k=1}^n \frac{k(n+1-k)}{(n+1)^2(n+2)} = \frac1{3(n+1)}.
\end{split}
\]
\end{remark}

\subsection{Upper bound}

\begin{theorem}[Upper bound, $d=1$]
Assume that ultracontractivity holds with $d=1$. 
Then
\[
\limsup_{n\to\infty} n \Exp\left[W_2^2(\mu^n,\meas)\right]  \leq
- \sum_{\lambda\in\sigma(\Delta)\setminus\{0\}} \frac1\lambda.
\]
\end{theorem}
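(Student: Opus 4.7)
I would follow the strategy sketched in the introduction: pass to the heat-regularized measure $P_t^*\mu^n$, use Dacorogna–Moser to compare it to $\meas$ via the Poisson solution $f^{n,t}$, and exploit the dimension-one energy convergence from \autoref{lem:energy-estimate}.

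Fix two small parameters $\eps,\eta>0$. The triangle inequality for $W_2$ together with Young's inequality gives
\[
W_2^2(\mu^n,\meas)\leq (1+\eps)\,W_2^2(P_t^*\mu^n,\meas)+(1+\eps^{-1})\,W_2^2(\mu^n,P_t^*\mu^n).
\]
Writing $\mu^n=\int\delta_x\d\mu^n(x)$, the joint convexity \eqref{eq:joint_convexity_bis} combined with (DR) bounds the last term deterministically by $(1+\eps^{-1})\Cdr t$. Choosing $t=t(n)$ with $nt(n)\to0$ makes this contribution $o(1/n)$ regardless of $\eps$.

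For the main term, observe that the density $u^{n,t}$ of $P_t^*\mu^n$ is strictly positive, and $g^{n,t}:=f^{n,t}/\sqrt n$ solves $\Delta g^{n,t}=u^{n,t}-1$ with Neumann boundary conditions. \autoref{prop:dacorogna-moser} then yields
\[
n\,W_2^2(P_t^*\mu^n,\meas)\leq \int_D\frac{\abs{\nabla f^{n,t}}^2}{M(u^{n,t},1)}\d\meas.
\]
To handle the denominator I would introduce the good event $A_\eta=\{\sup_{y\in D}\abs{u^{n,t}(y)-1}\leq\eta\}$. On $A_\eta$ the lower bound $M(a,b)\geq\sqrt{ab}$ from \eqref{eq:ambr2} gives $M(u^{n,t},1)\geq\sqrt{1-\eta}$, while on $A_\eta^c$ I use the trivial bound $W_2^2(P_t^*\mu^n,\meas)\leq\diam(D)^2$. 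Taking expectations,
\[
n\Exp\bigl[W_2^2(P_t^*\mu^n,\meas)\bigr]\leq \frac{1}{\sqrt{1-\eta}}\,\Exp\left[\int_D\abs{\nabla f^{n,t}}^2\d\meas\right] + n\diam(D)^2\,\Pro(A_\eta^c).
\]

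It remains to choose $t(n)$ suitably. I would take $t(n)=n^{-\alpha}$ with $\alpha\in(1,2)$: then $nt(n)\to0$ as required, and for any $q\in(\alpha/2,1)$ the hypothesis $t\geq\eta^{-1}n^{-2q}$ of \autoref{prop:d1-scaling} is satisfied for $n$ large, so $\Pro(A_\eta^c)\leq C\exp(-\gamma n^{1-q})$ and the correction term vanishes. The remaining expectation converges by \eqref{eq:energy-1d-power2} of \autoref{lem:energy-estimate} to $-\sum_{\lambda\in\sigma(\Delta)\setminus\{0\}}\lambda^{-1}$, whence
\[
\limsup_{n\to\infty} n\Exp\bigl[W_2^2(\mu^n,\meas)\bigr]\leq\frac{1+\eps}{\sqrt{1-\eta}}\left(-\sum_{\lambda\in\sigma(\Delta)\setminus\{0\}}\frac1\lambda\right),
\]
and sending $\eps,\eta\downarrow0$ concludes. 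The most delicate point is precisely this parameter balancing: $t$ must be small enough ($nt\to0$) for the dispersion error to be sub-leading, yet large enough ($t\geq n^{-2q}$ for some $q<1$) for the exponential density-fluctuation bound. This window is open only because in dimension one the tail estimate of \autoref{prop:d1-scaling} is genuinely stretched-exponential in $n$; a weaker bound would force conditioning with a further $c$-regularization in the spirit of \autoref{lem:log-mean-convergence}.
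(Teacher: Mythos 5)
Your proposal is correct and follows essentially the same route as the paper's proof: Young's inequality plus the dispersion bound (DR) to reduce to the regularized measure, Dacorogna--Moser with the logarithmic-mean bound $M(u^{n,t},1)\geq\sqrt{1-\eta}$ on the good event from \autoref{prop:d1-scaling}, and the energy convergence \eqref{eq:energy-1d-power2}. Your parameter choice $t=n^{-\alpha}$, $q\in(\alpha/2,1)$ is in fact slightly cleaner than the paper's $t=n^{-2q}$ in verifying the hypothesis $t\geq\eta^{-1}n^{-2q}$ of \autoref{prop:d1-scaling}.
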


\begin{proof}
Fix $q\in(1/2,1)$, $\eta\in (0,1)$ and let $t=t(n)=n^{-2q}$. For $\eta\in(0,1)$ consider the event
\[
A_\eta = A_{\eta,n} = \left\{ \sup_{y\in D} \frac{\abs{r^{n,t}(y)}}{\sqrt n} \leq \eta \right\}.
\]
By \autoref{prop:d1-scaling}, since $W_2^2(\mu^n,\meas)\leq(\diam D)^2$, for $n$ large enough we have
\[\begin{split}
n\Exp\left[W_2^2(\mu^n,\meas)\right] &=
n\Exp\left[W_2^2(\mu^n,\meas)\chi_{A_\eta}\right] +
	n\Exp\left[W_2^2(\mu^n,\meas)\chi_{A_\eta^c}\right]  \\
&\leq n\Exp\left[W_2^2(\mu^n,\meas)\chi_{A_\eta}\right] +
	C (\diam D)^2 n \exp\left(-\gamma n^{1-q}\right)
\end{split}\]
with $C=C(C_D,\Cg)$ and $\gamma=\gamma(\eta,\Cuc)>0$.

Using the Young inequality for products with $\alpha>0$ and $W_2^2(\mu^n,\mu^{n,t})\leq\Cdr t$ we have
\[\begin{split}
W_2^2(\mu^n,\meas) &\leq
\bigl(W_2(\mu^{n,t},\meas) + W_2(\mu^n,\mu^{n,t})\bigr)^2 \\
&\leq (1+\alpha)W_2^2(\mu^{n,t},\meas) + (1+\alpha^{-1})W_2^2(\mu^n,\mu^{n,t}) \\
&\leq (1+\alpha)W_2^2(\mu^{n,t},\meas) + (1+\alpha^{-1})\Cdr t.
\end{split}\]
Therefore, since $n t\to0$, it is sufficient to estimate
\[
\limsup_{n\to\infty} n\Exp\left[W_2^2(\mu^{n,t},\meas)\chi_{A_\eta}\right].
\]
To this end, we apply \autoref{prop:dacorogna-moser} with $u_0=u^{n,t}$ and $u_1=1$.
Since $f^{n,t}$ solves \eqref{eq:energy-estimate} from \autoref{prop:dacorogna-moser} we get
\[
W_2^2(\mu^{n,t},\meas) \leq
\frac1n \int_D \frac{\abs{\nabla f^{n,t}}^2}{M(u^{n,t},1)} \d\meas.
\]
In the event $A_\eta$ we have $u^{n,t}\geq1-\eta$ in $D$, hence the first inequality in \eqref{eq:ambr2} gives
\[
\frac1{M(u^{n,t},1)} \leq \frac1{\sqrt{u^{n,t}}} \leq \frac1{\sqrt{1-\eta}}.
\]
The previous two inequalities and \autoref{lem:energy-estimate} 
give
\[\begin{split}
\limsup_{n\to\infty} n \Exp\left[W_2^2(\mu^{n,t},\meas)\chi_{A_\eta}\right]
&\leq \lim_{n\to\infty}
	\frac1{\sqrt{1-\eta}} \Exp\left[ \int_D \abs{\nabla f^{n,t}}^2 \d\meas \right] \\
&= - \frac1{\sqrt{1-\eta}} \sum_{\lambda\in\sigma(\Delta)\setminus\{0\}} \frac1\lambda.
\end{split}\]
In conclusion we have
\[
\limsup_{n\to\infty} n\Exp\left[W_2^2(\mu^n,\meas)\right] \leq
- \frac{1+\alpha}{\sqrt{1-\eta}} \sum_{\lambda\in\sigma(\Delta)\setminus\{0\}} \frac1\lambda
\]
and we obtain the thesis by letting first $\alpha\to0$ and then $\eta\to0$.
\end{proof}

\begin{theorem}[Upper bound, $d=2$]
Assume that ultracontractivity holds with $d=2$ 
and that $D$ is as in \autoref{prop:expansion}. Then
\[
\limsup_{n\to\infty} \frac{n}{\log n} \Exp\left[W_2^2(\mu^n,\meas)\right]
\leq \frac 1{4\pi}.
\]
\end{theorem}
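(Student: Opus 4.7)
My plan parallels the one-dimensional upper bound, but has to overcome the fact that \autoref{prop:d2-scaling} forces $t\ge\gamma(\eta)n^{-1}\log n$, so that the coupling error $W_2^2(\mu^n,\mu^{n,t})\le\Cdr t$ contributes at exactly the scale $(\log n)/n$ we are trying to compute. Since one cannot shrink the constant $\gamma(\eta)$ while retaining the uniform oscillation control on $r^{n,t}$, I would abandon pointwise estimates and instead use the $L^2$-in-expectation control of the logarithmic mean from \autoref{lem:log-mean-convergence}, whose constant $\gamma$ is free.

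Fix $\gamma>0$ and a sequence $c=c(n)$ with $nc\to\infty$ and $nc/\log n\to 0$ (for instance $c=(\log\log n)/n$), and set $t=t(n)=\gamma n^{-1}\log n$. Introduce the doubly regularized measure $\mu^{n,t,c}=(1-c)\mu^{n,t}+c\meas$, with density $u^{n,t,c}=(1-c)u^{n,t}+c>0$, and observe that $\tilde f:=-(1-c)n^{-1/2}f^{n,t}$ is a weak solution of the Neumann Poisson problem $\Delta\tilde f=1-u^{n,t,c}$. \autoref{prop:dacorogna-moser} then yields
\[
W_2^2(\mu^{n,t,c},\meas)\le \frac{(1-c)^2}{n}\int_D \frac{|\nabla f^{n,t}|^2}{M(u^{n,t,c},1)}\d\meas.
\]
Splitting $1/M=1+(1/M-1)$ and applying Cauchy-Schwarz, the cross term is bounded by $\Exp[\int|\nabla f^{n,t}|^4]^{1/2}\cdot\Exp[\int(1/M-1)^2]^{1/2}$. \autoref{lem:renormalized-energy-estimate} controls the first factor by $C\log n$, while \autoref{lem:log-mean-convergence} makes the second factor infinitesimal, so the cross term is $o(\log n)$. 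Combined with $\Exp[\int|\nabla f^{n,t}|^2]/\log n\to 1/(4\pi)$ from \autoref{lem:renormalized-energy-estimate} (since $|\log t|/\log n\to 1$), this gives
\[
\limsup_{n\to\infty}\frac{n}{\log n}\Exp\bigl[W_2^2(\mu^{n,t,c},\meas)\bigr]\le \frac{1}{4\pi}.
\]

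To pass from $\mu^{n,t,c}$ back to $\mu^n$, I would apply the Young-type triangle inequality
\[
W_2^2(\mu^n,\meas)\le (1+\beta)\,W_2^2(\mu^{n,t,c},\meas) + (1+\beta^{-1})\,W_2^2(\mu^n,\mu^{n,t,c}),
\]
split $W_2^2(\mu^n,\mu^{n,t,c})$ through another Young step, and bound $W_2^2(\mu^n,\mu^{n,t})\le\Cdr t$ via (DR) and $W_2^2(\mu^{n,t},\mu^{n,t,c})\le c(\diam D)^2$ via the joint convexity \eqref{eq:joint_convexity}. Scaling by $n/\log n$, the $c$-contribution vanishes by the choice of $c$, while the $t$-contribution becomes $(1+\beta^{-1})(1+\alpha)\Cdr\gamma$. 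Since the left-hand side is independent of $\alpha,\beta,\gamma$, I would first send $\gamma\to 0$ to kill the second term and then $\alpha,\beta\to 0$ to collapse the factor $(1+\beta)$, obtaining the desired bound $1/(4\pi)$. The main obstacle is precisely this three-parameter balancing: in $d=2$ the heat coupling error is of the same order as the matching cost itself, which forces the use of the $L^2$-integral estimate of \autoref{lem:log-mean-convergence} (with its free $\gamma$) instead of the sharper but more rigid uniform estimate of \autoref{prop:d2-scaling}.
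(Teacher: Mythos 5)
Your proposal is correct and follows essentially the same route as the paper: double regularization $\mu^{n,t,c}=(1-c)\mu^{n,t}+c\meas$, the Dacorogna--Moser bound of \autoref{prop:dacorogna-moser}, the split $1/M=1+(1/M-1)$ with Cauchy--Schwarz controlled by \autoref{lem:renormalized-energy-estimate} and \autoref{lem:log-mean-convergence}, and a Young-type triangle inequality with $\gamma\to0$ at the end. The only (immaterial) difference is your choice of $c(n)$ with $nc/\log n\to 0$, whereas the paper simply takes $c(n)=\gamma n^{-1}\log n$ and lets the resulting $(\diam D)^2\gamma$ term vanish together with the $\Cdr\gamma$ term as $\gamma\to0$.
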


\begin{proof}
Fix $\gamma>0$ and let $t(n)=c(n)=\gamma n^{-1}\log n$. Let us set
\[
\mu^{n,t,c} = (1-c)\mu^{n,t}+c\meas, \qquad
u^{n,t,c} = (1-c)u^{n,t}+c
\]
as in \autoref{lem:log-mean-convergence}. From the joint convexity of $W_2^2$ (see \eqref{eq:joint_convexity}) we immediately get
\[
W_2^2(\mu^{n,t},\mu^{n,t,c}) \leq (\diam D)^2 c
\]
Using the Young inequality for products with $\alpha>0$ and $W_2^2(\mu^n,\mu^{n,t})\leq\Cdr t$, we have
\[\begin{split}
W_2^2(\mu^n,\meas) &\leq
\bigl(W_2(\mu^{n,t,c},\meas) + W_2(\mu^n,\mu^{n,t})
	+ W_2(\mu^{n,\eps},\mu^{n,t,c})\bigr)^2 \\
&\leq (1+\alpha)W_2^2(\mu^{n,t,c},\meas) + 2(1+\alpha^{-1})
	\bigl[W_2^2(\mu^n,\mu^{n,t}) + W_2^2(\mu^{n,t},\mu^{n,t,c})\bigr] \\
&\leq (1+\alpha)W_2^2(\mu^{n,t,c},\meas) + 2(1+\alpha^{-1})
	[\Cdr t + (\diam D)^2c].
\end{split}\]
We start by estimating the contribution of the first term.

To this end we apply \autoref{prop:dacorogna-moser} with $u_0=u^{n,t,c}$ and $u_1=1$.
Recalling that $f^{n,t}$ solves the PDE $\Delta f^{n,t}=\sqrt{n}(u^{n,t}-1)$ with homogeneous 
Neumann boundary conditions, we get
\[
\Delta \frac{(1-c)}{\sqrt{n}}f^{n,t}=(u^{n,t,c}-1),
\]
hence \autoref{prop:dacorogna-moser} gives
\begin{equation}\label{eq:dac-mos-2d}
W_2^2(\mu^{n,t,c},\meas) \leq
\frac{(1-c)^2}n \int_D \frac{\abs{\nabla f^{n,t}}^2}{M(u^{n,t,c},1)} \d\meas \leq
\frac1n \int_D \frac{\abs{\nabla f^{n,t}}^2}{M(u^{n,t,c},1)} \d\meas.
\end{equation}
Adding and subtracting $\abs{\nabla f^{n,t}}^2$ to the integrand we obtain
\[
\Exp\left[ \int_D \frac{\abs{\nabla f^{n,t}}^2}{M(u^{n,t,c},1)} \d\meas \right] =
\Exp\left[ \int_D \abs{\nabla f^{n,t}}^2 \d\meas \right] +
	\Exp\left[ \int_D \abs{\nabla f^{n,t}}^2
		\left( M(u^{n,t,c},1)^{-1}-1 \right) \d\meas \right]
\]
We deal with the two addends separately. For the former, since the function $f^{n,t}$ solves \eqref{eq:energy-estimate}, $t\geq C/n$ and
$\abs{\log t}/\log n \to 1$ as $n\to\infty$, 
\autoref{lem:renormalized-energy-estimate} gives
\[
\lim_{n\to\infty}\frac1{\log n} \Exp\left[ \int_D \abs{\nabla f^{n,t}}^2 \d\meas \right]
	 =\frac 1{4\pi}. 
\]
For the latter, by Cauchy-Schwarz inequality we have
\[\begin{split}
&\limsup_{n\to\infty} \frac{1}{\log n}\Exp\left[ \int_D \abs{\nabla f^{n,t}}^2
		\left( M(u^{n,t,c},1)^{-1}-1 \right) \d\meas \right]  \leq \\
&\quad\leq \left( \limsup_{n\to\infty}\frac{1}{\log n}
	\Exp\left[ \int_D \abs{\nabla f^{n,t}}^4 \d\meas \right]^{1/2}  \right)
\left(\lim_{n\to\infty}
	\Exp\left[ \int_D \left( M(u^{n,t,c},1)^{-1}-1 \right)^2 \d\meas \right]^{1/2}
	\right)
\end{split}\]
which converges to $0$ by \autoref{lem:renormalized-energy-estimate} and \autoref{lem:log-mean-convergence}.

Recalling \eqref{eq:dac-mos-2d}, we deduce
\[
\limsup_{n\to\infty} \frac{n}{\log n}\Exp [ W_2^2(\mu^{n,t,c},\meas) ]  \leq
\frac 1{4\pi}. 
\]
In conclusion
\[
\limsup_{n\to\infty} \frac{n}{\log n} \Exp\left[W_2^2(\mu^n,\meas)\right]
\leq \frac{(1+\alpha) }{4\pi}
	+ 2(1+\alpha^{-1}) [\Cdr+(\diam D)^2] \gamma
\]
and the thesis follows letting first $\gamma\to0$ and then $\alpha\to0$.
\end{proof}

\subsection{Lower bound}

\begin{theorem}[Lower bound, $d=1$]
Assume that ultracontractivity holds with $d=1$ and that $N_D(\delta)\leq \max\{1,C_D\delta^{-1}\}$ for every $\delta>0$. Then
\[
\liminf_{n\to\infty} n\Exp\left[W_2^2(\mu^n,\meas)\right] \geq
- \sum_{\lambda\in\sigma(\Delta)\setminus\{0\}} \frac1\lambda.
\]
\end{theorem}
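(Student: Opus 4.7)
The plan is to apply the Kantorovich duality \eqref{eq:ambr5} with the test function $f=-\tfrac{1}{\sqrt n}f^{n,t}$ suggested by the ansatz, paired with the dual potential $g$ produced by \autoref{cor:good-enough-potential}, and then to extract the sharp constant from the convergence of $\Exp\bigl[\int_D|\nabla f^{n,t}|^2\,d\meas\bigr]$ proved in \autoref{lem:energy-estimate}. Before invoking duality I would reduce the problem from $\mu^n$ to its regularization $\mu^{n,t}$: from $W_2(\mu^{n,t},\meas)\le W_2(\mu^n,\meas)+W_2(\mu^n,\mu^{n,t})$ and $W_2^2(\mu^n,\mu^{n,t})\le \Cdr t$, Young's inequality gives, for every $\alpha>0$,
\[
W_2^2(\mu^n,\meas) \ge \tfrac{1}{1+\alpha}\,W_2^2(\mu^{n,t},\meas) - \tfrac{1+\alpha^{-1}}{1+\alpha}\,\Cdr\, t.
\]
Choosing $t=t(n)=n^{-2q}$ with $q\in(1/2,1)$ makes $n\Cdr t\to 0$, so it suffices to bound $\liminf n\Exp\bigl[W_2^2(\mu^{n,t},\meas)\bigr]$ from below and remove the factor $1/(1+\alpha)$ at the end by $\alpha\to 0$.

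To lower bound $W_2^2(\mu^{n,t},\meas)$, apply \autoref{cor:good-enough-potential} to $f=-\tfrac{1}{\sqrt n}f^{n,t}$, producing $g\in C_b(D)$ with $f(x)+g(y)\le \dist(x,y)^2/2$. Duality then yields
\[
\tfrac12 W_2^2(\mu^{n,t},\meas)
\ge \int_D f\,d\mu^{n,t} + \int_D g\,d\meas
= \int_D f\,d(\mu^{n,t}-\meas) + \int_D (f+g)\,d\meas.
\]
The weak Neumann equation $\Delta f^{n,t}=r^{n,t}$ and integration by parts give $\int_D f^{n,t}r^{n,t}\,d\meas = -\int_D|\nabla f^{n,t}|^2\,d\meas$, so
\[
\int_D f\,d(\mu^{n,t}-\meas) = -\tfrac{1}{n}\int_D f^{n,t}r^{n,t}\,d\meas = \tfrac{1}{n}\int_D|\nabla f^{n,t}|^2\,d\meas,
\]
while \autoref{cor:good-enough-potential} controls the remaining term as $\int_D(f+g)\,d\meas \ge -\tfrac{C(f)}{2n}\int_D|\nabla f^{n,t}|^2\,d\meas$, with $C(f)=\exp\!\bigl(\|(\Delta f)^-\|_\infty+\tfrac12(e^{2K^-}-1)\|\nabla f\|_\infty^2\bigr)$. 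Assembling these pieces produces the pathwise bound
\[
W_2^2(\mu^{n,t},\meas)\ge \frac{2-C(f)}{n}\int_D|\nabla f^{n,t}|^2\,d\meas.
\]

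To show that the prefactor $2-C(f)$ tends to $1$, I would restrict to the high-probability event $A_\eta=\bigl\{\|r^{n,t}\|_\infty/\sqrt n\le\eta\bigr\}$: on $A_\eta$ one has $\|(\Delta f)^-\|_\infty\le\eta$ directly and, through the $L^\infty$ elliptic estimate \eqref{eq:elliptic-regularity} (a consequence of gradient contractivity), $\|\nabla f\|_\infty\le \tilde C\eta$ for a geometric $\tilde C$, so $C(f)\le C_\eta$ with $C_\eta\to 1$ as $\eta\to 0$. By \autoref{prop:d1-scaling}, $\Pro(A_\eta^c)\le C\exp(-\gamma n^{1-q})$; combining this with the $L^2$ energy estimate \eqref{eq:energy-1d-power4} via Cauchy--Schwarz yields $\Exp\bigl[\chi_{A_\eta^c}\int_D|\nabla f^{n,t}|^2\,d\meas\bigr]\to 0$. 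Together with the convergence \eqref{eq:energy-1d-power2} this gives
\[
\liminf_{n\to\infty} n\Exp\bigl[W_2^2(\mu^{n,t},\meas)\bigr] \ge (2-C_\eta)\biggl(-\sum_{\lambda\in\sigma(\Delta)\setminus\{0\}}\frac{1}{\lambda}\biggr),
\]
and the theorem follows by letting $\eta\to 0$ and then $\alpha\to 0$.

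The main obstacle is the simultaneous control of $\|(\Delta f)^-\|_\infty$ and $\|\nabla f\|_\infty$ on the good event, which requires converting the pointwise density control of \autoref{prop:d1-scaling} into a gradient bound through the elliptic regularity consequence of (GC); a secondary technical point is that \autoref{cor:good-enough-potential} is stated for manifolds without boundary and its proof must be verified to adapt to $D=[0,1]$ with the Neumann conditions used throughout \autoref{sec:heat}, since the key identity $\int_D\Delta\phi_t^\sigma\,d\meas=0$ entering \eqref{eq:ambr12} continues to hold once the boundary flux vanishes.
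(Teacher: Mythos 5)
Your proof is correct and follows essentially the same route as the paper's: Kantorovich duality with $f=-f^{n,t}/\sqrt{n}$ paired with the Hopf--Cole potential of \autoref{cor:good-enough-potential}, the good event $A_\eta$ from \autoref{prop:d1-scaling} combined with the elliptic estimate \eqref{eq:elliptic-regularity} to drive the exponential prefactor to $1$, and Cauchy--Schwarz with \eqref{eq:energy-1d-power4} to discard the bad event. The only immaterial deviations are that the paper reduces from $\mu^n$ to $\mu^{n,t}$ via the $W_2$-contractivity of the heat flow rather than via (DR), the triangle and Young inequalities, and that it takes $t=\eta^{-1}n^{-2q}$ so that the hypothesis $\eta^{-1}n^{-2q}\le t$ of \autoref{prop:d1-scaling} is literally met (your choice $t=n^{-2q}$ requires a one-line adjustment of the exponent).
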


\begin{proof}
Fix $q\in(0,1)$, $\eta\in (0,1)$ and let $t=t(n)=\eta^{-1} n^{-2q}$. By
\autoref{prop:d1-scaling} the complement of the event
\begin{equation}\label{eq:ambr15}
A_\eta = A_{\eta,n} =
\left\{ \sup_{y\in D} \frac{\abs{r^{n,t}(y)}}{\sqrt n} \leq \eta \right\}
\end{equation}
has infinitesimal probability as $n\to\infty$.
By the contractivity assumption we have
\[
W_2^2(\mu^n,\meas) \geq e^{2Kt}W_2^2(\mu^{n,t},\meas).
\]
Therefore it is sufficient to estimate $\liminf\limits_{n\to\infty} n\Exp[W_2^2(\mu^{n,t},\meas)\chi_{A_\eta}]$ from below.
By duality, 
\begin{equation}\label{eq:ambr16}\begin{split}
\frac12 W_2^2(\mu^{n,t},\meas) &\geq
	\sup\left\{ \int_D f\d\mu^{n,t} + \int_D g\d\meas
	\ \bigg\vert\ f(x)+g(y) \leq \frac{\dist(x,y)^2}{2} \right\} \\
&= \sup\left\{ \int_D f \frac{\d r^{n,t}}{\sqrt n}+\int_D (f+g)\d\meas  
	\ \bigg\vert\ f(x)+g(y) \leq \frac{\dist(x,y)^2}{2} \right\}.
\end{split}\end{equation}
Let $f^{n,t}$ be the solution to \eqref{eq:energy-estimate} and define $f=-f^{n,t}/\sqrt{n}$, so that $\norm{\Delta f}_\infty\leq\eta$ in the event $A_\eta$, and we can estimate thanks to \eqref{eq:elliptic-regularity}
\begin{equation}\label{eq:ambr18}
\norm{(\Delta f)^+}_\infty+\frac{e^{2K^-t}-1}{2}\norm{\nabla f}_\infty^2\leq \omega(\eta)
\end{equation}
with $\omega(\eta)\to 0$ as $\eta\to 0$.
 To this function $f$ we associate the potential $g$ given by \autoref{cor:good-enough-potential}, hence we get (still
in the event $A_\eta$)
\[\begin{split}
\frac12 W_2^2(\mu^{n,\eps},\meas) &\geq
	\int_D (f+g)\d\meas + \int_D f \frac{\d r^{n,t}}{\sqrt n} \\
&\geq - e^{\omega(\eta)} \int_D \frac{\abs{\nabla f}^2}2 \d\meas
	- \int_D f \Delta f \d\meas = \\
&= \left(1-\frac{e^{\omega(\eta)}}2\right)
	\frac1n \int_D \abs{\nabla f^{n,t}}^2 \d\meas.
\end{split}\]
Thus, by \autoref{lem:energy-estimate},
\[\begin{split}
\frac{1}{2-e^{\omega(\eta)}}\liminf_{n\to\infty}
	&n\Exp\left[W_2^2(\mu^{n,t},\meas)\chi_{A_\eta}\right] \geq
\liminf_{n\to\infty} \Exp\left[
	\chi_{A_\eta} \int_D \abs{\nabla f^{n,t}}^2 \d\meas \right] \\
&\geq \lim_{n\to\infty} \Exp\left[
	\int_D \abs{\nabla f^{n,t}}^2 \d\meas \right]
	- \limsup_{n\to\infty} \Exp\left[
	\chi_{A^c_\eta} \int_D \abs{\nabla f^{n,t}}^2 \d\meas \right]  \\
&= - \sum_{\lambda\in\sigma(\Delta)\setminus\{0\}} \frac1\lambda
\end{split}\]
because
\[
\limsup_{n\to\infty} \Exp\left[
	\chi_{A_\eta^c} \int_D \abs{\nabla f^{n,t}}^2 \d\meas \right] \leq
\limsup_{n\to\infty} \Pro(A_{\eta,n}^c)^{1/2}
\left(\Exp\left[\left(
	\int_D \abs{\nabla f^{n,t}}^2 \d\meas \right)^2\right]\right)^{1/2}
= 0
\]
by H\"older inequality and \eqref{eq:energy-1d-power4}.
The thesis follows letting $\eta\to0$.
\end{proof}

\begin{theorem}[Lower bound, $d=2$]\label{thm:lwb2}
Assume that ultracontractivity holds with $d=2$ and
that $N_D(\delta)\leq C\delta^{-2}$ for every $\delta>0$. Then
\[
\liminf_{n\to\infty} \Exp\left[W_2^2(\mu^n,\meas)\right] \frac{n}{\log n} \geq\frac{1}{4\pi}.
\]
\end{theorem}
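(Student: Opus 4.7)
The strategy is to mirror the one-dimensional lower bound argument, with two adjustments forced by $d=2$: the regularization time must be chosen at the critical scale $t=t(n)=\gamma n^{-1}\log n$ (for $\gamma$ large depending on $\eta$), and the Dirichlet energy $\int_D\abs{\nabla f^{n,t}}^2\d\meas$ diverges and must be renormalized by $\log n$. To begin, I fix $\eta\in(0,1)$ small and pick $\gamma=\gamma(\eta)$ so that \autoref{prop:d2-scaling} applies and delivers $\Pro(A_\eta^c)\leq C/n$ for the event
\[
A_\eta = \left\{ \sup_{y\in D} \frac{\abs{r^{n,t}(y)}}{\sqrt n} \leq \eta \right\}.
\]
The $W_2$-contractivity of $P_t^*$ (a consequence of (GC)) combined with $t\to 0$ gives $W_2^2(\mu^n,\meas)\geq e^{-2\abs K t}W_2^2(\mu^{n,t},\meas)$ with $e^{-2\abs K t}\to 1$, so it suffices to bound $\Exp[W_2^2(\mu^{n,t},\meas)\chi_{A_\eta}]$ from below.

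Next I apply the duality formula \eqref{eq:ambr16} with $f=-f^{n,t}/\sqrt n$, where $f^{n,t}$ solves the random Poisson equation \eqref{eq:energy-estimate}. On $A_\eta$ one has $\norm{\Delta f}_\infty\leq \eta$, so the elliptic regularity \eqref{eq:elliptic-regularity} yields $\norm{\nabla f}_\infty\leq C\eta$; this makes the quantity in \eqref{eq:ambr18} an $\omega(\eta)\to 0$. \autoref{cor:good-enough-potential} then produces a dual potential $g$ satisfying $f(x)+g(y)\leq\dist(x,y)^2/2$ together with $\int_D(f+g)\d\meas\geq -e^{\omega(\eta)}\int_D\abs{\nabla f}^2/2\,\d\meas$. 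Using $\int_D f\,\d r^{n,t}/\sqrt n = -\int_D f^{n,t}\Delta f^{n,t}\,\d\meas/n = \frac{1}{n}\int_D\abs{\nabla f^{n,t}}^2\d\meas$, exactly as in the one-dimensional proof, I conclude on $A_\eta$
\[
W_2^2(\mu^{n,t},\meas) \geq \bigl(2-e^{\omega(\eta)}\bigr)\,\frac{1}{n}\int_D \abs{\nabla f^{n,t}}^2\d\meas.
\]

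It remains to pass to the limit after multiplying by $n/\log n$ and taking expectations. The convergence statement \eqref{eq:energy-2d-convergence} of \autoref{lem:renormalized-energy-estimate} gives, since $\abs{\log t}/\log n\to 1$,
\[
\lim_{n\to\infty}\frac{1}{\log n}\Exp\left[\int_D\abs{\nabla f^{n,t}}^2\d\meas\right] = \frac{1}{4\pi},
\]
while the $L^4$-bound \eqref{eq:energy-2d-power4} and Cauchy--Schwarz both in the probability space and in $D$ control the bad event:
\[
\frac{1}{\log n}\Exp\left[\chi_{A_\eta^c}\int_D\abs{\nabla f^{n,t}}^2\d\meas\right] \leq \frac{\Pro(A_\eta^c)^{1/2}}{\log n}\Exp\left[\meas(D)\int_D\abs{\nabla f^{n,t}}^4\d\meas\right]^{1/2} \leq \frac{C}{\sqrt n} \to 0.
\]
Combining, $\liminf_n\frac{n}{\log n}\Exp[W_2^2(\mu^n,\meas)] \geq (2-e^{\omega(\eta)})/(4\pi)$, and letting $\eta\to 0$ (so $e^{\omega(\eta)}\to 1$) yields the claim $1/(4\pi)$.

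The main obstacle is the simultaneous calibration of three parameters: the regularization threshold $\eta$ that controls the Hamilton--Jacobi correction $\omega(\eta)$, the regularization time $t=\gamma n^{-1}\log n$ where $\gamma=\gamma(\eta)$ must be large enough for \autoref{prop:d2-scaling} to give a polynomial bound $\Pro(A_\eta^c)\lesssim 1/n$, and the Young/Cauchy--Schwarz weights used to split the good and bad events. This forces $t$ to the precise scale $n^{-1}\log n$ at which \autoref{lem:renormalized-energy-estimate} delivers the sharp constant $1/(4\pi)$ and the $L^4$ energy grows only like $(\log n)^2$, which is exactly enough to drown out $\Pro(A_\eta^c)^{1/2}$ when divided by $\log n$. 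Any deviation from this scaling would either make the tail of $A_\eta^c$ too fat or give up on the sharp constant.
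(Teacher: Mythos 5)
Your proposal is correct and follows essentially the same route as the paper's proof: the event $A_\eta$ from \autoref{prop:d2-scaling} with $t=\gamma n^{-1}\log n$, $W_2$-contractivity of $P_t^*$, duality via \autoref{cor:good-enough-potential} applied to $f=-f^{n,t}/\sqrt n$ yielding the factor $2-e^{\omega(\eta)}$, and the combination of \eqref{eq:energy-2d-convergence} with the $L^4$ bound \eqref{eq:energy-2d-power4} and Cauchy--Schwarz to discard $A_\eta^c$. The only implicit point (shared with the paper) is that \autoref{cor:good-enough-potential} requires $D$ without boundary, with the case $D=[0,1]^2$ recovered by comparison with $\T^2$.
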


\begin{proof}
By \autoref{prop:d2-scaling}, for any $\eta\in(0,1)$ there is $\gamma>0$ such that, if we let $t=t(n)=\gamma n^{-1}\log n$, the event 
$A_\eta$ in \eqref{eq:ambr15} satisfies $\Pro(A_\eta^c) \leq C/n$, for $n$ large enough and some $C>0$ independent of $n$.
As in the previous proof, thanks to contractivity it is sufficient to estimate from below
\[
\liminf_{n\to\infty} \frac{n}{\log n}
	\Exp\left[W_2^2(\mu^{n,t},\meas)\chi_{A_\eta}\right] .
\]
Let $f^{n,t}$ be the solution to \eqref{eq:energy-estimate} and define $f=-f^{n,t}/\sqrt{n}$, so that $\norm{\Delta f}_\infty\leq\eta$ in the event $A_\eta$. To this function $f$ we associate the potential $g$ given by \autoref{cor:good-enough-potential}, hence 
thanks to the duality formula \eqref{eq:ambr16} we can estimate (in the event $A_\eta$)
\[\begin{split}
\frac12 W_2^2(\mu^{n,t},\meas) &\geq
	\int_D (f+g)\d\meas + \int_D f \frac{\d r^{n,t}}{\sqrt n} \\
&\geq - e^{\omega(\eta)} \int_D \frac{\abs{\nabla f}^2}2 \d\meas
	- \int_D f \Delta f \d\meas \\
&= \left(1-\frac{e^{\omega(\eta)}}2\right)
	\frac1n \int_D \abs{\nabla f^{n,t}}^2 \d\meas
\end{split}\]
with $\omega(\eta)$ as in \eqref{eq:ambr18}.
Since $t\geq C/n$ for some positive constant $C$ and $\abs{\log t}/\log n\to 1$, from
\autoref{lem:renormalized-energy-estimate} we get
\[\begin{split}
\frac{1}{2-e^{\omega(\eta)}} &\liminf_{n\to\infty} \frac{n}{\log n}\Exp\left[W_2^2(\mu^{n,t},\meas)\chi_{A_\eta}\right]  \geq
\liminf_{n\to\infty} \Exp\left[
	\chi_A \int_D \abs{\nabla f^{n,t}}^2 \d\meas \right]
	\frac{1}{\log n} \\
&\geq \lim_{n\to\infty} \Exp\left[
	\int_D \abs{\nabla f^{n,t}}^2 \d\meas \right]\frac{1}{\log n}
	- \limsup_{n\to\infty} \Exp\left[
	\chi_{A^c} \int_D \abs{\nabla f^{n,t}}^2 \d\meas \right]
	\frac{1}{\log n} \\
&= \frac 1{4\pi}
\end{split}\]
because
\begin{multline*}
\limsup_{n\to\infty} \Exp\left[
	\chi_{A_\eta^c} \int_D \abs{\nabla f^{n,t}}^2 \d\meas \right] \frac{1}{\log n}\leq \\
\leq \limsup_{n\to\infty} \Pro(A_\eta^c)^{1/2}
\left(\Exp\left[ \int_D \abs{\nabla f^{n,t}}^4 \d\meas \right] \frac{1}{(\log n)^2}\right)^{1/2}
= 0
\end{multline*}
by H\"older inequality and \eqref{eq:energy-2d-power4}.
In conclusion we have
\[
\liminf_{n\to\infty}  \frac{n}{\log n} \Exp\left[W_2^2(\mu^n,\meas)\right]
\geq \frac 1{4\pi}
\]
and the thesis follows letting $\eta\to0$.
\end{proof}

\subsection{The bipartite case}

We prove now the bipartite part of \autoref{thm:main}. 
It will be convenient to introduce a notation $(\Omega,\Pro)$ for the underlying probability space. 

\begin{lemma} Let $D\subset\setR^d$ be a compact set and assume that $\meas\in\Prob(D)$ is absolutely continuous w.r.t.\ the Lebesgue measure. The $L^2(D,\meas;D)$-valued maps 
\[
\Omega\ni\omega\mapsto T^{\mu^n(\omega)},\quad
\Omega\ni\omega\mapsto T^{\nu^n(\omega)}
\]
providing the optimal maps from $\meas$ to $\mu^n(\omega)$ and $\nu^n(\omega)$ are measurable and independent. 
\end{lemma}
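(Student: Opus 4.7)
The plan is to prove the two assertions (measurability and independence) in the order stated, since measurability is the prerequisite to even formulate independence of the $L^2$-valued maps.

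For measurability, the natural approach is to factor the map $\omega \mapsto T^{\mu^n(\omega)}$ as the composition of two maps: first $\omega \mapsto \mu^n(\omega)$ from $(\Omega,\Pro)$ into $\Prob(D)$ (endowed with the topology of weak convergence, which is metrizable since $D$ is compact), and then $\mu \mapsto T^\mu_\meas$ from $\Prob(D)$ into $L^2(D,\meas;D)$. The first map is measurable, being a Borel-measurable function of the random sample $(X_1,\ldots,X_n)$: indeed, for every $\phi \in C(D)$, the map $\omega \mapsto \int_D \phi \d\mu^n(\omega) = \tfrac1n\sum_i \phi(X_i(\omega))$ is measurable, and this characterises weak continuity on the compact set $\Prob(D)$. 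The second map is continuous by \autoref{optimaT}(b), hence Borel measurable. The composition is therefore measurable, and the same argument applies to $\omega \mapsto T^{\nu^n(\omega)}$.

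For independence, I would argue at the level of the generated $\sigma$-algebras. By the factorization above, the $\sigma$-algebra $\sigma(T^{\mu^n})$ is contained in $\sigma(\mu^n) \subseteq \sigma(X_1,\ldots,X_n)$, and similarly $\sigma(T^{\nu^n}) \subseteq \sigma(Y_1,\ldots,Y_n)$. Since by assumption the families $(X_i)_{i=1}^n$ and $(Y_i)_{i=1}^n$ are independent, so are the two sub-$\sigma$-algebras $\sigma(X_1,\ldots,X_n)$ and $\sigma(Y_1,\ldots,Y_n)$, and this independence descends to $\sigma(T^{\mu^n})$ and $\sigma(T^{\nu^n})$. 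This is exactly the independence of the two $L^2$-valued random elements.

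The main (and only mild) obstacle is to make sure that the Borel structure on $L^2(D,\meas;D)$ used implicitly in the statement is the one induced by the norm topology, and that Proposition \autoref{optimaT}(b) really provides norm (not just weak) continuity of $\nu \mapsto T^\mu_\nu$. Since the target $D$ is compact, convergence in $\mu$-measure implies $L^2$-norm convergence by dominated convergence, so no issue arises. All other steps are routine consequences of the factorisation and of the independence of the underlying samples.
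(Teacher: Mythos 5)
Your proposal is correct and follows essentially the same route as the paper: factor the map through the measurable, independent $\Prob(D)$-valued random variables $\mu^n$, $\nu^n$ and compose with the map $\nu\mapsto T_\meas^{\nu}$, which is continuous by \autoref{optimaT}(b), so that independence is preserved. Your extra care about the norm topology on $L^2(D,\meas;D)$ is a reasonable sanity check but adds nothing beyond what \autoref{optimaT}(b) already states.
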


\begin{proof} The independence of $(X_i,Y_i)$ easily implies that the two measure-valued random variables
 $\mu^n(\omega)$, $\nu^n(\omega)$ are measurable  and independent, where in $\Prob(D)$ we consider the Borel
 $\sigma$-algebra induced by the topology of weak convergence in duality with $C(D)$. Now, recalling
 \autoref{optimaT}, since independence is stable under composition with continuous functions the statement follows.
\end{proof}

\begin{proposition}
Let $D\subset\setR^d$ be a bounded domain. For all $n\geq 1$ one has
\begin{equation}\label{eq:ambr23}
\Exp\left[W_2^2(\mu^n,\nu^n)\right] =2 \Exp\left[W_2^2(\mu^n,\meas)\right] -
 2 \int_D \abs*{\Exp\left[T^{\mu^n}(x)-x\right]}^2 \d\meas(x) .
\end{equation}
\end{proposition}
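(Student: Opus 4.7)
The plan is to exploit the natural coupling of $\mu^n$ with $\nu^n$ induced by the two independent Brenier maps $T^{\mu^n}$ and $T^{\nu^n}$, and then convert the resulting bound into the claimed identity by means of the iid variance formula $\Exp|X-Y|^2 = 2\Var(X)$ applied pointwise in $x$.

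First I would observe that, since $\mu^n = T^{\mu^n}_\#\meas$ and $\nu^n = T^{\nu^n}_\#\meas$, the pushforward $(T^{\mu^n}, T^{\nu^n})_\#\meas$ is an admissible coupling between $\mu^n$ and $\nu^n$. By the variational definition of $W_2^2$ this yields, pathwise,
\[
W_2^2(\mu^n, \nu^n) \leq \int_D \abs{T^{\mu^n}(x) - T^{\nu^n}(x)}^2 \d\meas(x).
\]
Taking expectation, exchanging it with the spatial integral by Fubini, and using the preceding lemma (which ensures that $T^{\mu^n}$ and $T^{\nu^n}$ are independent and equidistributed as $L^2(D,\meas;D)$-valued random variables, so that for every fixed $x$ the vectors $T^{\mu^n}(x), T^{\nu^n}(x)$ are iid in $\setR^d$), the elementary identity $\Exp|X-Y|^2 = 2\Var(X)$ for iid $\setR^d$-valued $X,Y$ gives
\[
\Exp\bigl[W_2^2(\mu^n, \nu^n)\bigr] \leq 2\int_D \Var\bigl(T^{\mu^n}(x)\bigr)\d\meas(x).
\]

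The next, purely algebraic step uses that $x$ is deterministic, so $\Var(T^{\mu^n}(x)) = \Var(T^{\mu^n}(x)-x) = \Exp\abs{T^{\mu^n}(x)-x}^2 - \abs{\Exp(T^{\mu^n}(x)-x)}^2$. Integrating against $\meas$ and recognising $\Exp W_2^2(\mu^n,\meas) = \int_D \Exp\abs{T^{\mu^n}(x)-x}^2\d\meas(x)$ (Fubini again), the right-hand side of the target identity \eqref{eq:ambr23} appears directly.

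The main obstacle is the reverse inequality required for the stated equality. Pathwise, the product coupling $(T^{\mu^n}, T^{\nu^n})_\#\meas$ is in general only one among many admissible couplings of $\mu^n$ with $\nu^n$: in $d=1$ it agrees with the monotone rearrangement and is therefore optimal pathwise, but in $d\ge 2$ the induced coupling corresponds to a genuine convex combination of permutation matchings and is strictly sub-optimal on a positive-probability set of configurations. The reverse bound cannot, accordingly, come purely from the pathwise coupling; one must exploit additional structure, either via the Kantorovich dual with test functions built from the Brenier potentials of $\meas\to\mu^n$ and $\meas\to\nu^n$, or via an exchangeability/symmetrisation argument between the two samples $(X_i)$ and $(Y_i)$. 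This matching lower bound is the genuinely delicate part.
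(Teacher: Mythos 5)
Your derivation of the inequality
\[
\Exp\left[W_2^2(\mu^n,\nu^n)\right] \leq 2 \Exp\left[W_2^2(\mu^n,\meas)\right] -
 2 \int_D \abs*{\Exp\left[T^{\mu^n}(x)-x\right]}^2 \d\meas(x)
\]
is exactly the paper's argument: the product coupling $(T^{\mu^n},T^{\nu^n})_\#\meas$, Fubini, independence and equidistribution of the two Brenier maps, and the variance decomposition $\Exp\abs{X-Y}^2=2\Var(X)$ integrated in $x$ (the paper phrases the independence step as the identity \eqref{eq:ambr17} for $L^2(D,\meas;\setR^d)$-valued random variables, proved via an orthonormal-basis projection, which is slightly more careful than your pointwise-in-$x$ formulation but substantively identical). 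The ``obstacle'' you flag at the end is not a gap in your proof but a slip in the statement: the paper's own proof establishes only ``$\leq$'', never the reverse inequality, and the text immediately afterwards refers to ``the inequality in \eqref{eq:ambr23}''. Only the one-sided bound is ever used: it gives the upper bound \eqref{eq:ambr21} for the bipartite cost after discarding the nonpositive term, and, once the matching lower bound $1/(2\pi)$ is proved independently by the duality argument on $\T^2$, rearranging the same inequality (with the numerator term now kept) squeezes $\frac{n}{\log n}\int_D\abs{\Exp[T^{\mu^n}(x)-x]}^2\d\meas$ between $0$ and a quantity tending to $2\cdot\frac1{4\pi}-\frac1{2\pi}=0$, which is how \eqref{eq:ambr27} is obtained. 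So you should read the ``$=$'' as ``$\leq$''; your proof is then complete and coincides with the paper's, and your diagnosis that the reverse inequality would require genuinely new input (the product coupling being suboptimal in $d\geq2$) is correct --- the paper neither proves nor needs it.
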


\begin{proof}
If $S,T:\Omega\to L^2(D,\meas;\setR^d)$ are independent, one has the identity
\begin{equation}\label{eq:ambr17}
\Exp\left[\int_D \langle S^\omega(x), T^\omega(x)\rangle \d\meas(x) \right]=
\int_D\langle\Exp\left[S^\omega(x)\right],\Exp\left[T^\omega(x)\right]\rangle\d\meas(x).
\end{equation}
We sketch the argument of the proof: if $S=\lambda e$, $T=\lambda' e'$, with $\lambda,\lambda':\Omega\to\setR^d$ and
$e,e'$ orthogonal unit vectors of $L^2(D,\meas)$,
then $\lambda$ and $\lambda'$ are independent and \eqref{eq:ambr17} reduces
to $\Exp[\langle\lambda,\lambda'\rangle]=\langle\Exp[\lambda],\Exp[\lambda']\rangle$. By bilinearity, \eqref{eq:ambr17} still holds if $S$ and $T$ take
their values in the vector space generated on $\setR^d$ by a finite orthonormal set $\{e_1,\ldots,e_k\}$ of $L^2(D,\meas)$.
By a standard projection argument, and by approximation, we recover the general result.

 For all $\omega\in\Omega$ the plan $(T^{\mu^n(\omega)},T^{\nu^n(\omega)})_\#\meas$ is a coupling between $\mu^n(\omega)$ and 
$\nu^n(\omega)$. Hence (omitting for simplicity the dependence on $\omega$) and using \eqref{eq:ambr17} with
$S=T^{\mu_n}$, $T=T^{\nu^n}$ one has
\[
\begin{split}
\Exp&\left[W_2^2(\mu^n,\nu^n)\right] \leq \Exp\left[\int_D \abs*{T^{\mu^n}-T^{\nu^n}}^2 \d\meas\right] \\
&= \Exp\left[\int_D \left(\abs*{T^{\mu^n}(x)-x}^2 + \abs*{T^{\nu^n}(x)-x}^2
- 2 \langle T^{\mu^n}(x)-x, T^{\nu^n}(x)-x \rangle \right)\d\meas(x)\right] \\
&= 2 \Exp\left[W_2^2(\mu^n,\meas)\right]
- 2 \Exp\left[\int_D \langle T^{\mu^n}(x)-x, T^{\nu^n}(x)-x \rangle \d\meas(x) \right]  \\
&= 2 \Exp\left[W_2^2(\mu^n,\meas)\right] - 2 \int_D \abs*{\Exp\left[T^{\mu^n}(x)-x\right]}^2 \d\meas(x),
\end{split}
\]
where we used that $\Exp\left[W_2^2(\mu^n,\meas)\right] = \Exp\left[W_2^2(\nu^n,\meas)\right]$ since $\mu^n$ and $\nu^n$ have the same law.
\end{proof}

In particular, combining the inequality in \eqref{eq:ambr23} (neglecting for a moment the
negative term in the right hand side) with the first part of \autoref{thm:main}, we obtain
\begin{equation}\label{eq:ambr21}
\limsup_{n\to\infty} \frac{n}{\log n}\Exp\left[W_{2,\T^2}^2(\mu^n,\nu^n)\right]\leq
\limsup_{n\to\infty} \frac{n}{\log n}\Exp\left[W_{2,[0,1]^2}^2(\mu^n,\nu^n)\right]\leq \frac{1}{2\pi}.
\end{equation}

Next, we deal with lower bounds. It will be sufficient, by a comparison argument, to provide the lower bound
only in the flat torus.

\begin{proposition}
Let $\D=\T^2$. Then
\begin{equation}
\liminf_{n\to\infty}\frac{n}{\log n}\Exp\left[W_2^2(\mu^n,\nu^n)\right] \geq \frac 1{2\pi}.
\end{equation}
\end{proposition}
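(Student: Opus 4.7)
My plan is to adapt the duality argument of the unipartite lower bound (\autoref{thm:lwb2}) to the pair $(\mu^n,\nu^n)$ by using the \emph{difference potential} $h^{n,t}:=f^{n,t}_\mu-f^{n,t}_\nu$ as test function, where $f^{n,t}_\mu$ and $f^{n,t}_\nu$ are the mean-zero solutions of \eqref{eq:energy-estimate} associated with $\mu^n$ and $\nu^n$ respectively. The factor $2$ between the target $1/(2\pi)$ and the unipartite value $1/(4\pi)$ is automatic: since $\mu^n$ and $\nu^n$ are independent and $\Exp[\nabla f^{n,t}_\mu]=\Exp[\nabla f^{n,t}_\nu]=0$, the cross term vanishes in expectation and
\[\Exp\!\left[\int_D|\nabla h^{n,t}|^2\,\d\meas\right]=2\,\Exp\!\left[\int_D|\nabla f^{n,t}_\mu|^2\,\d\meas\right]\sim\frac{\log n}{2\pi},\]
the asymptotic following from \autoref{lem:renormalized-energy-estimate}.

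Concretely, I fix $\eta\in(0,1)$, choose $t=\gamma n^{-1}\log n$ with $\gamma$ large enough so that \autoref{prop:d2-scaling} applies simultaneously to both samples, and work on the good event $A_\eta:=\{\|u^{n,t}-1\|_\infty\vee\|v^{n,t}-1\|_\infty\leq\eta\}$, with $\Pro(A_\eta^c)\leq C/n$. Setting $\phi:=h^{n,t}/\sqrt n$, one has $\Delta\phi=u^{n,t}-v^{n,t}$, hence $\|\Delta\phi\|_\infty\leq 2\eta$ and $\|\nabla\phi\|_\infty\leq C\eta$ on $A_\eta$ by \eqref{eq:elliptic-regularity}. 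Using the $W_2$-contractivity from (GC) to reduce $W_2^2(\mu^n,\nu^n)$ to $W_2^2(\mu^{n,t},\nu^{n,t})$ at cost $e^{-2Kt}\to 1$, I apply the duality \eqref{eq:ambr5} with test function $\phi$. Writing $\Delta^Q:=\phi-Q_1\phi\geq 0$ and exploiting the identity $v^{n,t}-u^{n,t}=-\Delta\phi$ together with integration by parts on the torus,
\[-\int_D\phi\,\d\mu^{n,t}+\int_D Q_1\phi\,\d\nu^{n,t}=\int_D\phi(v^{n,t}-u^{n,t})\,\d\meas-\int_D\Delta^Q v^{n,t}\,\d\meas=\int_D|\nabla\phi|^2\,\d\meas-\int_D\Delta^Q v^{n,t}\,\d\meas.\]
Applying \autoref{cor:good-enough-potential} to $-\phi$ (whose negative Laplacian is bounded by $2\eta$ on $A_\eta$) yields $\int_D\Delta^Q\,\d\meas\leq e^{\omega(\eta)}\int_D|\nabla\phi|^2/2\,\d\meas$ with $\omega(\eta)\to 0$; combined with $v^{n,t}\leq 1+\eta$ on $A_\eta$, this gives, on $A_\eta$,
\[\tfrac12 W_2^2(\mu^n,\nu^n)\geq e^{-2Kt}\left[1-\tfrac{(1+\eta)e^{\omega(\eta)}}{2}\right]\frac1n\int_D|\nabla h^{n,t}|^2\,\d\meas.\]

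Taking expectation, combining with the independence identity for $\Exp[\int_D|\nabla h^{n,t}|^2\,\d\meas]$ and \autoref{lem:renormalized-energy-estimate}, and bounding the bad-event contribution by Cauchy--Schwarz together with the $L^4$-energy bound \eqref{eq:energy-2d-power4} (applied to both samples and combined via Minkowski, giving $\Exp[\chi_{A_\eta^c}\int_D|\nabla h^{n,t}|^2\,\d\meas]=O(\log n/\sqrt n)$), I obtain
\[\liminf_{n\to\infty}\frac{n}{\log n}\Exp[W_2^2(\mu^n,\nu^n)]\geq\bigl[2-(1+\eta)e^{\omega(\eta)}\bigr]\cdot\frac1{2\pi};\]
letting $\eta\to 0$ yields the claim. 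The delicate point is the treatment of the Hopf--Lax term $Q_1\phi$: the multiplicative factor $v^{n,t}\leq 1+\eta$ (not just $1$) in front of $\int_D\Delta^Q\,\d\meas$ must appear exactly once, so that the bracket above tends to $1/2$ and the extra factor $2$ from independence yields the sharp constant $1/(2\pi)$.
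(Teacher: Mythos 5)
Your proposal is correct and follows essentially the same route as the paper: the same difference potential $h^{n,t}=f^{n,t}_\mu-f^{n,t}_\nu$, the same good event controlling both regularized densities, contractivity to pass to $\mu^{n,t},\nu^{n,t}$, the dual potential from \autoref{cor:good-enough-potential} to control the Hopf--Lax defect, the density bound $v^{n,t}\leq 1+\eta$ to handle the extra term, independence to double the energy, and Cauchy--Schwarz with \eqref{eq:energy-2d-power4} on the bad event. The only differences are cosmetic (working with $\phi-Q_1\phi\geq 0$ rather than with the dual pair $h+k\leq 0$, and a sign convention), so the argument matches the paper's proof.
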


\begin{proof}
Similarly to the proof of \autoref{thm:lwb2}, for $\eta\in(0,1)$ we introduce the event
\[
A_\eta =A_{\eta, n}= \left\{
	\sup_{y\in D} \frac{\abs{r^{n,t}(y)}}{\sqrt n} \leq \frac\eta2,\ 
	\sup_{y\in D} \frac{\abs{s^{n,t}(y)}}{\sqrt n} \leq \frac\eta2
\right\},
\]
whose probability tends to 1 as $n\to\infty$.
By the contractivity assumption in $W_2$ we have
$W_2^2(\mu^n,\nu^n) \geq e^{2Kt} W_2^2(\mu^{n,t},\nu^{n,t})$, 
therefore it is sufficient to study the asymptotic behaviour of
\[
\Exp\left[W_2^2(\mu^{n,t},\nu^{n,t})\chi_{A_\eta}\right].
\]

To this end, we let $f^{n,t}$ be the solution to \eqref{eq:energy-estimate}, $g^{n,t}$ the solution to the same equation with $s^{n,t}$ in place of $r^{n,t}$ and $h^{n,t}=f^{n,t}-g^{n,t}$. Define $h=-h^{n,t}/\sqrt{n}$, so that $\Delta h = -(r^{n,t}-s^{n,t})/\sqrt n$ and $\norm{\Delta h}_\infty\leq\eta$ in the event $A_\eta$. To this function $h$ we associate the potential $k$ given by \autoref{cor:good-enough-potential}, 
hence we can estimate (in the event $A_\eta$, with $\omega(\eta)$ defined as in \eqref{eq:ambr18} with $f$ replaced by $h$)
\[\begin{split}
\frac12 W_2^2(\mu^{n,t},\nu^{n,t}) &\geq
	\int_D h\d\mu^{n,t} + \int_D k\d\nu^{n,t} \\
&= \int_D (h+k)\d\meas + \int_D h\frac{\d (r^{n,t}-s^{n,t})}{\sqrt n}
	+ \int_D (h+k)\frac{\d s^{n,t}}{\sqrt n} \\
&\geq - e^{\omega(\eta)} \int_D \frac{\abs{\nabla h}^2}2 \d\meas
	+ \int_D \abs{\nabla h}^2 \d\meas + \int_D (h+k)\frac{\d s^{n,t}}{\sqrt n} \\
&\geq \left(1-\frac{e^{\omega(\eta)}}2\right)
	\frac1n \int_D \abs{\nabla h^{n,t}}^2 \d\meas
	- \abs*{\int_D (h+k)\frac{\d s^{n,t}}{\sqrt n}}.
\end{split}\]
Since $h+k\leq0$, we have
\[
\abs*{\int_D (h+k)\frac{\d s^{n,t}}{\sqrt n}} \leq - \eta \int_D (h+k) \d\meas \leq
\eta e^{\omega(\eta)} \int_D \frac{\abs{\nabla h}^2}2 \d\meas,
\]
therefore, still in the event $A_\eta$,
\[
\frac12 W_2^2(\mu^{n,t},\nu^{n,t}) \geq
\left(1-(1+\eta)\frac{e^{\omega(\eta)}}2\right)\frac1n \int_D \abs{\nabla h^{n,t}}^2 \d\meas.
\]

The proof now concludes as before, noticing that, by independence of $\mu^n$ and $\nu^n$,
\[
\begin{split}
\Exp&\left[\int_D \abs{\nabla h^{n,t}}^2\d\meas\right] \\
&= \Exp\left[\int_D \abs{\nabla f^{n,t}}^2\d\meas\right] +
	\Exp\left[\int_D \abs{\nabla g^{n,t}}^2\d\meas\right] +
	2\Exp\left[\int_D \langle\nabla f^{n,t},\nabla g^{n,t}\rangle\d\meas\right] \\
&= 2 \Exp\left[\int_D \abs{\nabla f^{n,t}}^2\d\meas\right] -
	2\Exp\left[\int_D f^{n,t} \Delta g^{n,t}\d\meas\right] \\
&= 2 \Exp\left[\int_D \abs{\nabla f^{n,t}}^2\d\meas\right] -
	2\int_D \Exp[f^{n,t}] \Exp[s^{n,t}] \d\meas
= 2 \Exp\left[\int_D \abs{\nabla f^{n,t}}^2\d\meas\right]
\end{split}
\]
and
\[
\Exp\left[ \int_D \abs{\nabla h^{n,t}}^4 \d\meas \right]
\leq 16 \Exp\left[ \int_D \abs{\nabla f^{n,t}}^4 \d\meas \right]. \qedhere
\]
\end{proof}

From the previous result we get
\[
\liminf_{n\to\infty} \frac{n}{\log n}\Exp\left[W_{2,[0,1]^2}^2(\mu^n,\nu^n)\right]\geq
\liminf_{n\to\infty} \frac{n}{\log n}\Exp\left[W_{2,\T^2}^2(\mu^n,\nu^n)\right]\geq \frac{1}{2\pi}
\]
which, combined with \eqref{eq:ambr21}, concludes the proof \eqref{eq:ambr25}. By looking at \eqref{eq:ambr23} we see also that \eqref{eq:ambr27} holds, and this concludes the proof of \autoref{thm:main}.

\section{A new proof of the AKL lower bound}\label{sec:newAKL}

In this section we see how a minor modification of the ansatz of \cite{CLPS14} provides a new proof of the lower bound in \cite{AKT84}, written in terms of expectations; the upper bound follows immediately from \autoref{thm:main} and H\"older inequality.

The following real analysis lemma is well known, we state it for the case of the flat torus. Its proof (see for instance \cite{AF84}) can be obtained by considering the sublevel sets of the maximal function of $\abs{\nabla h}$.

\begin{lemma}[Lusin approximation of Sobolev functions]\label{lem:lusin}
For all $p>1$, $h\in H^{1,p}(\T^d)$ and all $\lambda>0$ there exists a $\lambda$-Lipschitz function
$\phi:\T^d\to\setR$ with
\begin{equation}\label{eq:ajtai3}
\meas(\{h\neq\phi\})\leq
\frac{C(d,p)}{\lambda^p}\int_{\T^d}\abs{\nabla h}^p\d\meas.
\end{equation}
\end{lemma}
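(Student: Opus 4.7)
The plan is to follow the standard Lusin approximation construction via the Hardy--Littlewood maximal function, specialised to $\T^d$. Denote by $M g(x) = \sup_{r>0} \fint_{B_r(x)} |g|\d\meas$ the (centred or uncentred, equivalent up to constants) maximal function on $\T^d$, and set $g = |\nabla h|$. The strategy has three steps: identify a large ``good'' set on which $h$ is pointwise Lipschitz, estimate the measure of its complement, then extend the restriction to a global Lipschitz function.

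The first ingredient is the classical Sobolev--Poincar\'e pointwise estimate: for $h \in H^{1,p}(\T^d)$ with $p>1$ and Lebesgue points $x,y$ of $h$,
\[
|h(x) - h(y)| \le C(d)\,\dist(x,y)\bigl(Mg(x) + Mg(y)\bigr).
\]
Consequently, on the sublevel set
\[
E_\lambda = \bigl\{ x \in \T^d : Mg(x) \le \lambda/(2C(d)) \bigr\},
\]
the function $h$ (modified on a $\meas$-null set to coincide with its precise representative) is $\lambda$-Lipschitz.

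The second step is the measure estimate. Since $p>1$, the maximal function is bounded on $L^p$:
\[
\int_{\T^d} (Mg)^p \d\meas \le C(d,p) \int_{\T^d} g^p \d\meas,
\]
so Chebyshev yields
\[
\meas(\T^d \setminus E_\lambda) \le \left(\tfrac{2C(d)}{\lambda}\right)^p \int_{\T^d} (Mg)^p \d\meas \le \frac{C(d,p)}{\lambda^p} \int_{\T^d} |\nabla h|^p \d\meas,
\]
which is the desired bound for the exceptional set. This is the only place where $p>1$ is used; for $p=1$ the weak-$(1,1)$ estimate would give a weaker conclusion.

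Finally, since $h|_{E_\lambda}$ is $\lambda$-Lipschitz, apply McShane's extension theorem (valid on any metric space) to produce a $\lambda$-Lipschitz function $\phi:\T^d\to\R$ with $\phi = h$ on $E_\lambda$. Then $\{h \ne \phi\} \subset \T^d \setminus E_\lambda$ up to a $\meas$-null set, giving \eqref{eq:ajtai3}. The only mildly delicate points are checking the pointwise Poincar\'e inequality on $\T^d$ (which follows from the Euclidean one, since small balls in $\T^d$ are isometric to Euclidean balls) and ensuring the extension respects the Lipschitz constant, both of which are standard; I do not expect any real obstacle in carrying this out.
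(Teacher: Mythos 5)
Your argument is correct and is exactly the route the paper indicates: it does not give a self-contained proof but cites Acerbi--Fusco and notes that the lemma follows by considering sublevel sets of the maximal function of $\abs{\nabla h}$, which is precisely your construction (pointwise Sobolev--Poincar\'e estimate, $L^p$-boundedness of the maximal operator for $p>1$, Chebyshev, McShane extension). No issues.
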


\begin{theorem} If $D=\T^2$ one has
\[
\liminf_{n\to\infty}\sqrt{\frac{n}{\log n}}\Exp\left[W_1(\mu^n,\nu^n)\right]>0.
\]
By the triangle inequality, the same holds for the matching to the reference measure.
\end{theorem}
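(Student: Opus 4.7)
The plan is to use Kantorovich duality for $W_1$ with a $\lambda$-Lipschitz test function obtained from the CLPS potential via Lusin truncation. Set $\tilde h := h^{n,t}/\sqrt n$, with $h^{n,t}=f^{n,t}-g^{n,t}$ as in the bipartite lower bound \autoref{thm:lwb2}, so that $\Delta\tilde h = u^{n,t}-v^{n,t}$. By $W_1\le W_2$, the bound $W_2^2(\mu^n,\mu^{n,t})\le\Cdr t$, and the triangle inequality,
\[
\Exp[W_1(\mu^n,\nu^n)] \ge \Exp[W_1(\mu^{n,t},\nu^{n,t})] - 2\sqrt{\Cdr t};
\]
with $t=\gamma\log n/n$ it suffices to lower bound $\Exp[W_1(\mu^{n,t},\nu^{n,t})]$ by $c\sqrt{\log n/n}$ with $c>2\sqrt{\Cdr\gamma}$, and $\gamma>0$ will be sent to $0$ at the end.

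I apply \autoref{lem:lusin} to $\tilde h$ with $p=4$ and scale $\lambda=A\sqrt{\log n/n}$ (for a constant $A$ to be fixed large), yielding a $\lambda$-Lipschitz $\phi$ and a set $E=\{\phi=\tilde h\}$ with $\meas(E^c)\le C\lambda^{-4}\int|\nabla\tilde h|^4\,\d\meas$. Since $\tilde h$ is smooth, integration by parts on the boundaryless torus gives $\int\phi(u^{n,t}-v^{n,t})\,\d\meas = -\int\nabla\phi\cdot\nabla\tilde h\,\d\meas$; combined with duality (replacing $\phi$ by $-\phi$ if necessary), this leads to
\[
W_1(\mu^{n,t},\nu^{n,t})\ge\frac1\lambda\int\nabla\phi\cdot\nabla\tilde h\,\d\meas.
\]
Using the classical identity $\nabla\phi=\nabla\tilde h$ a.e.\ on $E$ and two Cauchy-Schwarz inequalities in $L^2(\meas)$, together with the Lusin bound $\meas(E^c)^{1/2}\le C^{1/2}\lambda^{-2}\|\nabla\tilde h\|_4^2$, I obtain the pathwise estimate
\[
W_1(\mu^{n,t},\nu^{n,t})\ge\frac{1}{\lambda}\|\nabla\tilde h\|_2^2-\frac{C^{1/2}}{\lambda^3}\|\nabla\tilde h\|_4^4-\frac{C^{1/2}}{\lambda^2}\|\nabla\tilde h\|_2\|\nabla\tilde h\|_4^2.
\]

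Taking expectations and invoking \autoref{lem:renormalized-energy-estimate} together with the independence of $f^{n,t}$ and $g^{n,t}$, which give $\Exp[\|\nabla\tilde h\|_2^2]=(2\pi)^{-1}(\log n/n)(1+o(1))$ and $\limsup_n\Exp[\|\nabla\tilde h\|_4^4]\cdot(n/\log n)^2<\infty$, plus the Cauchy-Schwarz bound $\Exp[\|\nabla\tilde h\|_2\|\nabla\tilde h\|_4^2]\le\sqrt{\Exp[\|\nabla\tilde h\|_2^2]\Exp[\|\nabla\tilde h\|_4^4]}$, the choice $\lambda=A\sqrt{\log n/n}$ yields
\[
\Exp[W_1(\mu^{n,t},\nu^{n,t})]\ge\Bigl[\frac{1}{2\pi A}-\frac{C_1}{A^2}-\frac{C_2}{A^3}\Bigr]\sqrt{\log n/n}\,(1+o(1)).
\]
Fixing $A$ large produces a strictly positive leading coefficient; then choosing $\gamma$ correspondingly small absorbs the $2\sqrt{\Cdr\gamma\log n/n}$ correction and gives the claimed $\liminf>0$.

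The main obstacle is the balance in $\lambda$: if $\lambda$ is too small, $\meas(E^c)$ is of order $1$ and the two error terms swamp the main term; if too large, the contribution $\|\nabla\tilde h\|_2^2/\lambda$ has the wrong order. Using $p=4$ rather than $p=2$ in \autoref{lem:lusin} is crucial: it shrinks $\meas(E^c)$ to size $\lambda^{-4}\|\nabla\tilde h\|_4^4$, and it is precisely the fourth-moment gradient estimate \eqref{eq:energy-2d-power4} that supplies the quadratic-in-$\log n/n$ decay needed to make all error contributions vanish as $A\to\infty$ uniformly in $n$.
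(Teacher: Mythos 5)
Your proof is correct and follows essentially the same route as the paper's: Kantorovich duality for $W_1$ tested against a Lipschitz (Lusin) truncation of the potential $h^{n,t}/\sqrt{n}$ at scale of order $\sqrt{n^{-1}\log n}$, with the exceptional set and the resulting error terms controlled through the fourth-moment gradient bound \eqref{eq:energy-2d-power4} together with \eqref{eq:energy-2d-convergence} and independence. The only (harmless) deviations are that you invoke \autoref{lem:lusin} with $p=4$ where the paper uses $p=2$ and then H\"older's inequality (both arguments ultimately rest on the same $L^4$ estimate), and that you reduce to the regularized measures via $W_1\le W_2$ and the dispersion bound, paying a $2\sqrt{\Cdr\gamma\, n^{-1}\log n}$ penalty, instead of using $W_1$-contractivity of $P_t^*$ (free on $\T^2$ since $K=0$), which forces you to fix $A$ first and then take $\gamma$ small --- a quantifier order you handle correctly.
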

\begin{proof} As in the proof of the lower bound for $p=2$ we can use contractivity, reducing ourselves to estimating from below the Wasserstein distance between the regularized measures $\mu^{n,t}=u^{n,t}\meas$, $\nu^{n,t}=v^{n,t}\meas$. Let $M>0$ be fixed and set $c(n)=M\sqrt{n^{-1}\log n}$. Let $t=t(n)=\gamma n^{-1}\log n$ with $\gamma$ sufficiently large and let $h^{n,t}$ be as in the proof of the lower bound in the case $p=2$, so that $h=h^{n,t}/\sqrt{n}$ satisfies
\begin{equation}\label{eq:ajtai2}
\lim_{n\to\infty}\frac{n}{\log n}\Exp \left[ \int_D \abs{\nabla h}^2\d\meas\right]=\frac{1}{2\pi}.
\end{equation}
Denote by $\phi$ the $c(n)$-Lipschitz function provided by \autoref{lem:lusin}. We denote by $E_n$ the set $\{h\neq\phi\}$ and from \eqref{eq:ajtai3} and \eqref{eq:ajtai2} we obtain the estimates
\[
\Exp\left[ \meas(E_n) \right] \leq \frac{C(2,2)}{c(n)^2}\Exp\left[ \int_D\abs{\nabla h}^2\d\meas\right] \leq\frac{2 C(2,2)}{2\pi M^2}
\]
%
%
for $n$ large enough, so that we can use H\"older inequality and \eqref{eq:energy-2d-power4} to get,  for some positive constant $C>0$,
\begin{equation}\label{eq:ajtai5}
\Exp \left[ \int_{E_n}\abs{\nabla h}^2\d\meas\right] \leq \frac{C}{M}\frac{\log n}{n}
\end{equation}
for $n$ large enough. Another application of H\"older's inequality yields
\begin{equation}\label{eq:ajtai7}
\Exp \left[ \int_{E_n}\abs{\nabla h}\d\meas\right] \leq\Exp \left[ \int_{E_n}\abs{\nabla h}^2\d\meas\right]^{1/2} \Exp \left[ \meas(E_n) \right]^{1/2} \le \frac{C}{M^{3/2}} \sqrt{\frac{\log n}{n}}
\end{equation}
again for some positive constant $C>0$ (possibly larger than the one in \eqref{eq:ajtai5}).

From Kantorovich's duality formula we get
\[
c(n) W_1(\mu^{n,t},\nu^{n,t})\geq
\abs*{ \int_D \phi (u^{n,t}-v^{n,t})\d\meas } =
\abs*{ \int_D \langle\nabla h,\nabla\phi\rangle \d\meas },
\]
where we used the PDE $\Delta h=u^{n,t}-v^{n,t}$ solved by $h$.
Now we estimate
\[
W_1(\mu^{n,t},\nu^{n,t}) \geq
\frac{1}{c(n)}\int_D \abs{\nabla h}^2\d\meas
	-\frac{1}{c(n)}\abs*{
	\int_{E_n}\langle\nabla h,\nabla h-\nabla \phi\rangle\d\meas }.
\]
By \eqref{eq:ajtai2}, the first term is asymptotic to $(2\pi M)^{-1} \sqrt{n^{-1}\log n}$. We will see that, for $M$ sufficiently large, the first term dominates the second one. Indeed, we have
\[
\frac{1}{c(n)}\abs*{\int_{E_n}\langle\nabla h,\nabla h-\nabla \phi\rangle\d\meas}   \le \frac{1}{c(n)} \left[ \int_{E_n}\abs{\nabla h}^2 + \abs{\nabla h} c(n) \d\meas\right]\]
Taking expectation, using \eqref{eq:ajtai5} and \eqref{eq:ajtai7} we have the inequality, for $n$ sufficiently large,
\[ \frac{1}{c(n)} \Exp \left[ \int_{E_n}\abs{\nabla h}^2 + \abs{\nabla h} c(n) \d\meas\right]  \le C \left(\frac{1}{M^2} + \frac{1}{M^{3/2}} \right)  \sqrt{\frac{\log n}{n}}. \qedhere\]
\end{proof}

\section{Open problems and extensions}\label{sec:more_general_spaces}

In this section we discuss open problems, the present limitations of our technique, and some potential generalizations.

\paragraph{Improvements in the case $p=d=2$.} In this case, the more demanding prediction of \cite{CLPS14} is
\[
\lim_{n\to\infty} \left( \frac{n}{\log n}
	\Exp\left[W_2^2(\mu^n,\nu^n)\right]-\frac 1{2\pi} \right) \log n \in\setR.
\]
This is still open, in this connection notice also that our technique for the lower bound requires $t=\gamma n^{-1}\log n$ with $\gamma$ sufficiently large, while necessarily in the upper bound one is forced to take $t=\gamma n^{-1}\log n$ with $\gamma$ small. Other open problems regard the distribution of the random variables $\frac{n}{\log n} W_2^2(\mu^n,\nu^n)$ and the matching problem involving more general reference measures $\meas$ (the Gaussian case could be interesting, replacing the heat semigroup with the Ornstein-Uhlenbeck semigroup).

\paragraph{Different powers and dimensions.} Our proof in the case $d=2$ exploits the extra room given by the logarithmic correction to the ``natural'' scale $n^{-1/d}$. Let us discuss the difficulties coming from $p\neq 2$ and $d>2$ separately, of course the problem is even more challenging if both things happen.

If $d=2$ and $p=1$, we have already seen in \autoref{sec:newAKL} that the proof can be adapted to obtain the tight lower bound of \cite{AKT84}. Via H\"older's inequality, one obtains the tight upper and lower bounds also for $1<p<2$, and we believe that also the case $p>2$ could be covered, by estimating $\Exp\left[\abs{\nabla f^{n,t}}^k\right]$ with $k$ large integer (we did this for $k=2,4$).
On the other hand, proving convergence of the renormalized expectations seems to require a more precise scheme, since the gradients of solutions to the Monge-Amp\`ere equation describe the optimal transport map $T$ only when $p=2$; in this vein, one could consider (see \cite[Theorem~6.2.4]{AGS08}) the linearizations of
\[
T = \Id - \abs{\nabla\varphi}^{\frac {2-p}{p-1}}\nabla\varphi,\qquad
\rho_1(T)\det(\nabla T)=\rho_0.
\]
In the case $p=1$, an alternative PDE possibility could be given by the construction of the transport density via a $q$-laplacian approximation in \cite{EG99}, $q\to\infty$, which led to the first rigorous proof of the optimal transport map for Monge's problem.

If $p=2$ and $d>2$, the prediction of \cite{CLPS14} is that
\[
n^{2/d}\Exp\left[W_2^2(\mu^n,\nu^n)\right]-c_d =
\frac \xi{2\pi^2}n^{-1+2/d} + o\bigl(n^{-1+2/d}\bigr),
\]
where $c_d$ is not conjectured and the coefficient $\xi$ is explicitly given in terms of the Epstein function. However, our regularization technique seems to fail, even for the purpose of computing $c_d$ (namely proving convergence of the renormalized expectations) or getting tight bounds.
For instance, in the case $d=3$, from \eqref{eq:expansion} we get $\Exp\left[\abs{\nabla f^{n,t}}^2\right]\sim t^{-1/2}$, and therefore one should choose $t\sim  n^{-4/3}$, a regularization time much faster than $n^{-1}$, which does not seem to lead to the density bounds on $\abs{r^{n,t}}/\sqrt{n}$ needed for the proof of the lower bound. On the other hand, the dispersion estimate used in the proof of the upper bound requires $t=o(n^{-2/3})$, a less demanding condition.

\paragraph{A class of abstract metric measure spaces.} We already noticed that in our proof the geometry of the domain enters only through the properties of the heat semigroup $P_t$ with homogeneous Neumann boundary conditions. As a matter of fact, let us briefly indicate how our proof works, still in the case $N=2$, for the class $RCD^*(K,N)$ of ``Riemannian'' metric measure spaces $(X,\dist,\meas)$, extensively studied and characterized in \cite{AGS15}, \cite{AMS15}, \cite{EKS15}.
This class of possibly nonsmooth metric measure spaces, includes for instance all compact Riemannian manifolds without boundary, or ``convex'' manifolds with boundary, namely manifolds having the property that geodesics between any two points do not touch the boundary (as it happens for compact convex domains in $\setR^d$). The class $RCD^*(K,N)$ can be  characterized either in terms of suitable $K$-convexity properties w.r.t.\ $W_2$-geodesics (of the logarithmic entropy for $N=\infty$ \cite{AGS15}, of power entropy \cite{EKS15} or nonlinear diffusion semigroups \cite{AMS15} in the case $N<\infty$), or in terms of Bochner's inequality, very much in the spirit of the Bakry-\'Emery theory (see \cite{BGL14} for a nice introduction to the subject).
In the very recent work \cite{JLZ14}, all regularizing properties of $P_t$ needed for our proof to work have been proved in the context of $RCD^*(K,N)$ spaces. The only missing ingredient in this more abstract framework is the asymptotic expansion of the trace formula provided by \autoref{prop:expansion}, but thanks to \eqref{eq:energy2-n-eps} our results can be stated in terms of the limit
\[
\lim_{t\to 0^+} \sum_{\lambda\in\sigma(\Delta)\setminus\{0\}}
	\frac{e^{2\lambda t}}{\lambda\log t}
\]
whenever it exists.


\end{document}